\documentclass[msom,sglanonrev]{informs4}

\usepackage{graphicx}
\usepackage{subfig}
\graphicspath{{figs/}}

\DoubleSpacedXI
\EquationsNumberedThrough
\TheoremsNumberedThrough
\ECRepeatTheorems

\begin{document}

\RUNAUTHOR{Lin and Zhang}
\RUNTITLE{AI Review Before Human Approval}

\TITLE{When Should AI Review Precede Human Approval of Agent-Generated Code?}

\ARTICLEAUTHORS{%
\AUTHOR{Ruihan Lin, Jiheng Zhang}
\AFF{Department of Industrial Engineering and Decision Analytics,
The Hong Kong University of Science and Technology, Hong Kong \\
\EMAIL{rlinah@connect.ust.hk, jiheng@ust.hk}}
}

\ABSTRACT{
\textbf{Problem definition:} Software teams are adopting language-model
agents that draft complete pull requests from real issues. A generated
change, however, still cannot merge until a human maintainer approves it.
Teams can place an optional AI reviewer in front of that gate to filter
defective work before a person sees it, but the reviewer uses inference
capacity and its false rejections send merge-ready changes back for another
agent run. We ask how much of this review to use, and for which task
categories.
\textbf{Methodology/results:} A queueing analysis shows that the review
decision is a staffing map, not a fixed ranking. For one category the
optimal review fraction passes through four phases as maintainer capacity
grows, and in one phase an informative reviewer is deliberately left idle
because its rework displaces new work. For many categories the map changes
only at finitely many staffing breakpoints, leaves at most three categories
partially served or split across routes, and cannot be produced by ranking
categories once. The map is implementable: Quota Tracking attains the
planning bound in the fluid limit, and in a calibrated study its gap to
that bound shrinks with system size while fixed and single-index rules keep
theirs.
\textbf{Managerial implications:} AI review is a capacity decision as much
as an accuracy decision. False acceptance and false rejection load
different resources, so a single accuracy score cannot say how much to
review or whom to review; both answers move when the bottleneck moves.
Operators should estimate both error types and each category's service
requirements, compute the staffing breakpoints, and change review intensity
and category priority at those points.
}

\KEYWORDS{AI in operations, automated code review, queueing networks, rework, capacity allocation}

\maketitle

\def\MainIncludesAppendix{}
\section{Introduction}

Software teams increasingly rely on language-model systems to draft the
change that used to take most of the calendar time between an issue and a
pull request, so what the team now waits for is a maintainer's approval
rather than the writing of the code.  SWE-bench
\citep{jimenez2024swebench} asks whether such a system can resolve a real
GitHub issue, and agent designs such as SWE-agent
\citep{yang2024sweagent}, AutoCodeRover \citep{zhang2024autocoderover}, and
Agentless \citep{xia2025agentless} have raised those resolution rates
quickly.  Separately, three field experiments find that developers who use
an AI coding assistant complete about 26\% more tasks
\citep{cui2026effects}.  The merge itself has not been automated: a
maintainer still reads the diff and clicks approve, because that step
remains an ownership decision rather than a generation step.

The resulting pile-up is what makes an AI reviewer attractive.
SWE-Review \citep{wang2026swereview} puts an independent agent on a
generated pull request and still records both false approvals and false
rejections.  Even the generator is not a fixed quality: the same agent on
the same task succeeds in some runs and fails in others
\citep{bjarnason2026randomness}.  A change the reviewer lets through still
occupies a maintainer.  A change it sends back occupies another
coding-agent run.  That extra run is cheap if agents are waiting for work
and costly if they are already busy writing the next change.

A natural pair of operating defaults is then to treat review as a score.
One default leaves an informative reviewer on for every change, as if
screening were free.  The other ranks categories once by quality or by a
single error rate and sends review capacity to the top of the list.
Neither asks which resource the next review will tax, or whether that
resource is the one that is scarce today.

The workflow itself suggests a different description.  Each category can
reach the maintainer on a direct path or after an AI screen.  The screened
path buys maintainer time by intercepting defects, and pays for it with
inference and with regeneration when good work is rejected.  Categories
that look accurate after review may be slow to generate, expensive to
screen, or heavy on false rejection; categories that look sloppy may be
cheap to regenerate.  Work that waits too long can become obsolete as the
repository moves on.  The operating objective is approved throughput, not
reviewer accuracy.

These observations lead to three questions.  First, as maintainer capacity
changes, when should the operator review all generated work, review a
fraction, or bypass AI review entirely?  Second, with many categories, which
should receive scarce review capacity, and can a ranking of categories that
does not depend on capacity implement the optimum?  Third, can a steady-state allocation be turned into
event-level routing and service decisions without losing its throughput
benefit in the stochastic system?

We make three contributions, all properties of one staffing map.  First,
the map says how much to review.  In a single-category benchmark the
optimal review fraction moves through four phases as maintainer capacity
grows: full review, reviewer saturation with partial bypass, active
throttling once generation binds, and complete bypass when human approval
is abundant.  The third phase is the main insight.  An informative
reviewer is deliberately left idle because the regeneration it causes
displaces new work.  The thresholds are explicit in service rates, error
rates, and capacities.

Second, the map says whom to review.  Across any number of categories the
allocation is piecewise linear in maintainer staffing and changes only at
finitely many breakpoints.  Between breakpoints at most three categories
are marginal, being partially served or split across routes; all others
are fully served or unused.  Even when categories differ only in their
reviewers, no ranking fixed across staffing levels is optimal: scarce
maintainer time ranks reviewers by the false acceptances they let through,
scarce generation by the merge-ready work they send back, and the reviewed
category can change with staffing alone.

Third, the map is implementable.  Quota Tracking turns every planned
fraction into a deterministic cumulative quota, corrects an arbitrary
bounded initial state, and attains the planning bound under a fluid-first
limit.  A calibrated study then measures the cost of ignoring the map:
the value of adapting review to the bottleneck, the regret from ranking
categories once, the effect of calibration uncertainty, and how fast the
implemented map approaches the fluid optimum as the system grows.

\subsection{Literature Review}
\label{subsec:lit_review}

Three streams of work touch the problem, and the paper sits where they
meet.

\paragraph{Coding agents and code review.}
The agent systems and field evidence above measure whether a generated
change resolves an issue, and they treat quality as a property of attempts
rather than of tasks.  They do not ask how many changes a team can approve
per unit time when generation is fast and human approval is not.  On the
review side, \citet{bacchelli2013expectations} document that finding
defects is the primary purpose of modern code review, and
\citet{tufano2021automating} and \citet{li2022automating} train models to
draft review comments.  SWE-Review \citep{wang2026swereview} closes the
loop with an independent agent that reviews a generated pull request and
requests revisions, and it reports the agent's false-rejection and
false-acceptance rates by generator tier.  We take those measured error
rates as inputs and study the throughput question.

\paragraph{Imperfect AI evaluation and human--AI collaboration.}
Language models are increasingly used as judges of other models' output.
\citet{zheng2023judging} find that strong judges agree with human raters
about as often as humans agree with one another, but also document position
and verbosity biases; \citet{wang2024notfair} and \citet{koo2024cognitive}
show that such biases can flip a verdict; \citet{tan2024judgebench} show
that judge accuracy collapses on hard reasoning tasks; and
\citet{bavaresco2025llms} find that reliability varies widely across twenty
evaluation tasks.  Specialized evaluators \citep{zhu2023judgelm,
kim2024prometheus} and structured prompting \citep{liu2023geval} narrow but
do not close the gap, and \citet{gu2024survey} survey the field.  An AI
reviewer's two error rates are thus task dependent and must both enter any
operating model.  A separate literature
decides who should make a decision.  Learning-to-defer models train a
classifier together with a rule for passing hard cases to a human
\citep{madras2018predict,mozannar2020consistent}, medical applications
predict when a second opinion is needed \citep{raghu2019uncertainty}, and
\citet{donahue2022collaboration} characterize when the combination beats
either party alone.  \citet{fugener2022cognitive} show experimentally that
humans use AI advice poorly when they cannot tell good advice from bad, and
\citet{zhong2025optimal} derive the optimal integration of human, machine,
and generative-AI judgment.  Closest to us, \citet{lykouris2024learning}
and \citet{lee2024design} place deferral inside a queue, so that sending a
case to the human costs waiting time.  In all of these models the human is
optional and the AI can be terminal.  In ours the human is mandatory and
the AI is an optional filter before the human, so the question is no longer
whether the AI should decide, but whether an intermediate AI opinion is
worth its own capacity and the regeneration its mistakes cause.

\paragraph{Rework networks and fluid control.}
Queues with feedback are classical.  \citet{adve1994relationship} show that
deterministic feedback reduces variability relative to Bernoulli feedback in
an M/G/1 queue, an observation our routing rule exploits;
\citet{so1995optimal} derive the optimal operating policy for a bottleneck
with random rework, \citet{lee1997design} size a feedback buffer, and
re-entrant lines with inspection stations model semiconductor fabrication
\citep{kumar1993reentrant,narahari1996modeling,chen1988empirical}.  In
services, Erlang-R captures patients who return to the physician
\citep{yom2014erlang}, and \citet{chan2014use} show that speeding up service
can backfire when it raises the return probability.  Fluid models supply
the analytical tools: \citet{dai1995positive} establishes stability of
multiclass networks through their fluid limits, \citet{whitt2006fluid} and
\citet{zhang2013fluid} develop fluid models for many-server queues with
abandonment, and \citet{chen1993dynamic} schedule a multiclass fluid
network.  Index policies derived from fluid or diffusion approximations,
including the \(c\mu/\theta\) rule \citep{atar2010cmu}, its generalizations
\citep{long2020dynamic,long2024generalized}, thin-stream control
\citep{ata2006dynamic}, and dynamic matching \citep{gurvich2015dynamic,
hu2022dynamic}, translate a static allocation into event-level decisions.
A related fluid analysis of large-scale LLM inference
\citep{lin2026inference} gates and routes heterogeneous prefill and decode
jobs that contend for shared GPUs; the scarce resource is compute, and
routing does not send finished work back for regeneration.
In all of them the probability of rework is a parameter of the process.  In
ours it is a decision, because routing a change to AI review raises the
chance that a merge-ready change is sent back, in exchange for filtering
defective ones.

\paragraph{Positioning.}
Each stream has one of the three pieces the problem needs and lacks the
other two.  Evaluation research measures reviewer errors but not their
effect on capacity.  Collaboration models allocate decisions but treat the
human as optional and ignore the workload that AI errors create upstream.
Rework and fluid models have the machinery but treat rework as exogenous.
As coding agents make generation cheap, human approval becomes the binding
constraint, and two operating defaults then fail.  Always-on AI review
treats an informative filter as free, and a ranking of categories by
quality or by a single error statistic does not move when the bottleneck
moves.  Both defaults are wrong: review intensity and the reviewed
category must jump at staffing breakpoints, and the resulting map is
sparse enough to implement.

The paper proceeds as follows.  Section~\ref{sec:problem} formulates the
network and the control problem.  Section~\ref{sec:fluid_model} derives the
steady-state capacity-allocation problem.  Section~\ref{sec:canonical}
solves it for one category and characterizes the multiclass staffing
response.  Section~\ref{sec:stoch_control} presents Quota Tracking and
its guarantee, Section~\ref{sec:numerical} reports the calibrated study, and
Section~\ref{sec:conclusion} concludes.  The Appendix contains the
stochastic state equations, the fluid dynamics, and proof sketches of the
planning results.  Complete proofs of
Theorems~\ref{thm:fluid-limit} and~\ref{thm:asymptotic_optimality}
and the analysis of Quota Tracking appear in the Online Supplement.

\section{Problem Formulation}
\label{sec:problem}

We formulate the coding workflow as a multiclass service network with rework
and task obsolescence.  This section describes the workflow, maps reviewer
errors into flows, and specifies the stochastic primitives, the admissible
controls, and the objective of the control problem.

\begin{figure}[htbp]
\centering
\includegraphics[width=0.84\textwidth]{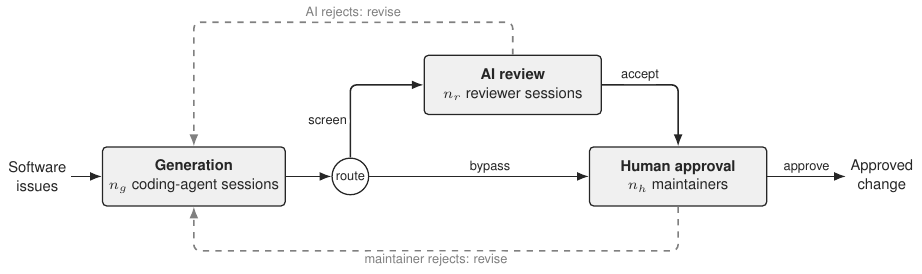}
\caption{Workflow architecture.  Solid arrows carry forward flow; dashed
arrows return rejected changes to generation for revision.  A generated
change is either screened by AI review or bypasses it, and every change
still requires human approval before it can merge.}
\label{fig:network}
\end{figure}

\subsection{Workflow}
\label{subsec:pf_workflow}

Figure~\ref{fig:network} shows the three resource pools.  Software issues
arrive exogenously and wait in the generation queue for a coding-agent run.
When a run completes, a router sends the proposed change either to the
AI-review queue or directly to the human-approval queue; changes that pass
AI review also join the human-approval queue.  A task is completed only when
a human maintainer approves the change.  Two rework channels (dashed
arrows) return work to the generation queue: the AI reviewer may request a
revision and a maintainer may reject the change.  A false rejection thus
sends a merge-ready change through another coding-agent run, and a false
acceptance uses review capacity without reducing human work.  The
subscripts \(g,r,h\) denote generation, AI review, and human approval.

\subsection{Quality and Error Model}
\label{subsec:pf_quality}

Consider \(I\) task classes indexed by \(i\in\mathcal I:=\{1,\dots,I\}\).
Let \(r_i>0\) be the value of an approved class-\(i\) change.  A
coding-agent output for class \(i\) is merge-ready with probability
\(1-\alpha_i\) and defective otherwise.  A maintainer detects defects, so on
the direct path a change is rejected with probability \(\alpha_i\).
Attempts are independent within a class, so after a requested revision the
next output has the same error probability \(\alpha_i\).

Conditional on latent quality, the AI reviewer makes two kinds of error.
Its Type~I error \(\beta_i^{(I)}\) is the probability that it requests
revisions to a merge-ready change (false rejection); its Type~II error
\(\beta_i^{(II)}\) is the probability that it accepts a defective change
(false acceptance).
\begin{align}
p_{\mathrm{pass},i} &:= (1-\alpha_i)(1-\beta_i^{(I)}) + \alpha_i\beta_i^{(II)}, \label{eq:pf_ppass}\\
p_{\mathrm{rej},i}  &:= (1-\alpha_i)\beta_i^{(I)} + \alpha_i(1-\beta_i^{(II)}), \label{eq:pf_prej}
\end{align}
and an accepted change is merge-ready with posterior probability
\begin{equation}
q_{\mathrm{acc},i}
:=\mathbb{P}(\text{merge-ready}\mid \text{AI reviewer accepts})
=\frac{(1-\alpha_i)(1-\beta_i^{(I)})}{p_{\mathrm{pass},i}}.
\label{eq:pf_qacc}
\end{equation}
On the screened path a maintainer therefore rejects with probability
\(1-q_{\mathrm{acc},i}\).  These three quantities carry all reviewer
information used below.  The share of reviewed work that proceeds to human
approval is \(p_{\mathrm{pass},i}\), the share returned for regeneration
is \(p_{\mathrm{rej},i}\), and the quality of what proceeds is
\(q_{\mathrm{acc},i}\).

AI review changes the composition of work reaching maintainers.  Among
directly routed changes the merge-ready share is \(1-\alpha_i\); among
AI-accepted changes it is \(q_{\mathrm{acc},i}\).  We call the reviewer
\emph{informative} for class \(i\) when \(q_{\mathrm{acc},i}>1-\alpha_i\),
that is, when screening raises quality at the human gate.  If the inequality
fails, screening class \(i\) cannot increase approved throughput, and an
optimal plan can bypass review for that class.

Independent attempts are a simplification: a reviewer's report may guide
the revision, so that an output revised after an AI rejection is
merge-ready with a higher probability \(1-\alpha_i'\).  Such
stage-dependent quality leaves Sections~\ref{sec:fluid_model}
and~\ref{sec:canonical} intact, because the planning program depends on
each class only through the expected generation, review, and maintainer
time per approved change, which remain finite constants with closed forms
(Online Supplement, Section~EC.1).  It can change one comparison: the
throttling phase of Proposition~\ref{prop:four_phase} exists exactly when
the screened route still consumes more generation per approved change than
the direct route.  Under independent attempts this holds whenever
\(\beta_i^{(I)}>0\); with the calibration of
Section~\ref{sec:numerical} it is reversed for the low-quality tier once
revised outputs resolve more than 42\% of issues, close to the gain the
source reports for one review-guided revision, and survives for the
medium-quality tier unless they resolve more than 86\%.  We keep
independent attempts as the baseline.

\paragraph{Imperfect maintainers.}
The maintainer above is a perfect quality gate, so AI review adds value
only by saving capacity.  Suppose instead that a maintainer approves a
defective change with probability \(\gamma_i\) and each escaped defect
costs \(c_i\).  Every planning result below survives after two
substitutions (Online Supplement, Section~EC.1): the reward per unit of
generation becomes
\(r_i(1-\alpha_i)-c_i\alpha_i\gamma_i\), and the false-rejection rate
\(\beta_i^{(I)}\) in the objective is replaced by the net generation cost
of review once the escaped defects it prevents are credited,
\(\tilde\beta_i:=[r_i(1-\alpha_i)\beta_i^{(I)}-c_i\alpha_i\gamma_i
(1-\beta_i^{(II)})]/[r_i(1-\alpha_i)-c_i\alpha_i\gamma_i]\).  The
throttling and bypass phases of Proposition~\ref{prop:four_phase} exist
exactly when \(\tilde\beta_i>0\), that is, when the merge-ready throughput
lost to false rejection outweighs the defect cost review prevents;
otherwise review stays at reviewer saturation however many maintainers are
added.  We set \(\gamma_i=0\) below so that every effect of review is a
capacity effect; \(\tilde\beta_i\) is the quantity to estimate when it is
not.

\subsection{Stochastic Model and Control Problem}
\label{subsec:pf_stochastic}

We consider a sequence of systems indexed by \(n\).  Class-\(i\) tasks
arrive as a Poisson process of rate \(n\lambda_i\).  The generation,
AI-review, and human-approval pools contain \(\lfloor nn_g\rfloor\),
\(\lfloor nn_r\rfloor\), and \(\lfloor nn_h\rfloor\) servers.  Service times
are independent and exponential with class-dependent rates
\(\mu_{g,i},\mu_{r,i},\mu_{h,i}\).  A task waiting at the generation,
AI-review, direct-approval, or post-review-approval node becomes obsolete at
rate \(\theta_{g,i},\theta_{r,i},\theta_{h_d,i}\), or \(\theta_{h_s,i}\);
we write \(\theta_i:=\theta_{g,i}\).  Obsolescence clocks are independent
of latent quality and stop during service.  Each completed attempt draws
its latent quality, after which AI and human outcomes follow the
probabilities in Section~\ref{subsec:pf_quality}.  Arrival, service,
outcome, and obsolescence primitives are mutually independent, and all
error parameters, service rates, and normalized capacities are fixed as
\(n\) grows.

\begin{assumption}[Primitive regularity]\label{ass:regular_params}
For every class, \(\lambda_i,\mu_{g,i},\mu_{r,i},\mu_{h,i}\), and
\(\theta_{i}\) are positive; the downstream obsolescence rates
\(\theta_{r,i},\theta_{h_d,i},\theta_{h_s,i}\) are finite and strictly
positive; \(\alpha_i\in(0,1)\), \(\beta_i^{(I)}<1\), and
\(p_{\mathrm{pass},i},p_{\mathrm{rej},i}>0\).  All normalized capacities
are finite and positive.
\end{assumption}
Positive downstream obsolescence means that an initial backlog at review or
approval cannot remain in the system forever; it is the condition under
which the control policy of Section~\ref{sec:stoch_control} recovers from
any bounded initial state before pooling begins.

Let \(Q_{\ell,i}^n(t)\) and \(X_{\ell,i}^n(t)\) denote the class-\(i\)
waiting and in-service counts at node \(\ell\in\{g,r,h_d,h_s\}\).  For the
in-service counts we also write \(X_i^n,Y_i^n,Z_{i,d}^n,Z_{i,s}^n\) at the
four nodes, and we let \(C_i^n(t)\) count class-\(i\) approvals by time
\(t\).  Appendix~\ref{app:state_equations} states the balance equations
and the Poisson random-time-change representation.

\begin{assumption}[Initial states]\label{ass:fluid-init}
The fluid-scaled initial state \(\bigl(Q_{\ell,i}^n(0),X_{\ell,i}^n(0)\bigr)/n\)
converges in expected \(\ell_1\) norm to a deterministic finite vector.
\end{assumption}

A policy chooses which waiting task enters service when a server becomes
available and the route of each generation completion.  Policies are
nonanticipative, nonpreemptive, event-driven, capacity-feasible, and
first-come-first-served within each class; the number of admissions and
routing decisions made at one event is bounded independently of \(n\), which
excludes fluid-scale batch moves.  Let \(\Pi^n\) denote this policy class.
The operator maximizes weighted approved throughput:
\begin{equation}
\max_{\boldsymbol{\pi}^n\in\Pi^n}
\liminf_{T\to\infty}\frac{1}{nT}
\mathbb E^{\boldsymbol{\pi}^n}\!\left[\sum_{i\in\mathcal I}r_iC_i^n(T)\right],
\label{eq:pf_obj}
\end{equation}
where \(r_i\) is the value of an approved class-\(i\) change.

\section{Steady-State Capacity Allocation}
\label{sec:fluid_model}

Scaling populations and cumulative flows by \(n\) yields a deterministic
fluid model.  The next theorem records that every subsequential limit of
the stochastic system is a trajectory of that model; we then plan
capacity with its stationary points.

\begin{theorem}[Fluid limit]
\label{thm:fluid-limit}
Fix \(T>0\).  Under Assumptions~\ref{ass:regular_params}
and~\ref{ass:fluid-init} and any admissible policy sequence, the
fluid-scaled state and flow process is \(C\)-tight on \([0,T]\).  Every
subsequential weak limit is almost surely continuous and satisfies the
deterministic balances \eqref{eq:fluid-balance}--\eqref{eq:fluid-rates}
and the capacity constraints \eqref{eq:fluid-cap}, with the initial state
of Assumption~\ref{ass:fluid-init}.
\end{theorem}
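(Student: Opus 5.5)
The plan is the standard Poisson random-time-change argument for fluid limits of multiclass networks with abandonment, as in \citet{dai1995positive} and \citet{zhang2013fluid}. Using the representation of Appendix~\ref{app:state_equations}, every primitive flow is written as a unit-rate Poisson process evaluated at a cumulative intensity. Arrivals use intensity \(n\lambda_i t\). Service completions at node \(\ell\) use \(\mu_{\ell,i}\int_0^t X_{\ell,i}^n(s)\,ds\). Obsolescence uses \(\theta_{\ell,i}\int_0^t Q_{\ell,i}^n(s)\,ds\). Quality and routing outcomes are thinnings of completions by independent Bernoulli sequences with the probabilities \eqref{eq:pf_ppass}--\eqref{eq:pf_qacc}, and the policy-chosen routing split is a cumulative count \(R_i^n(t)\) of generation completions sent to review. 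Dividing by \(n\), each Poisson term is decomposed into its centered part plus its compensator. The functional strong law of large numbers (and Glivenko--Cantelli for the Bernoulli thinnings) makes the centered parts vanish uniformly on \([0,T]\) in probability, provided the intensities are \(O(n)\).

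That proviso holds by an a priori bound. In-service counts are capped by \(\lfloor n n_\ell\rfloor\). Queue lengths are bounded by the initial content plus cumulative exogenous arrivals plus rework returns. Rework returns are themselves bounded by service completions, which are Poisson with rate at most \(n\max_i\mu_{\ell,i}n_\ell\). So the total fluid-scaled content is bounded in expectation on \([0,T]\) by a constant depending on the initial vector of Assumption~\ref{ass:fluid-init}, \(\lambda\), \(T\), and the capacities. It follows that the scaled intensities are Lipschitz in \(t\) with a stochastically bounded constant. Each scaled process then splits into a Lipschitz (hence \(C\)-tight) compensator plus a term vanishing uniformly. The routing counts \(R_i^n/n\) and the cumulative admissions are nondecreasing and dominated by completion counts, so their increments over \([s,t]\) are bounded by \(c(t-s)\) plus a vanishing error. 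This gives \(C\)-tightness of the joint state and flow process, and by Prokhorov a weakly convergent subsequence. Passing to a Skorokhod representation makes the convergence almost sure and uniform on \([0,T]\), and every limit is Lipschitz, hence continuous.

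To identify the limit, I pass to the limit in each balance equation. Integrals \(\int_0^t X^n/n\) converge because integration is continuous under uniform convergence. The Bernoulli thinnings produce the constant fractions in \eqref{eq:fluid-rates}. The routing split survives as an absolutely continuous fraction \(\dot{\bar R}_i\le \mu_{g,i}\bar X_i\), which is a free control in the fluid model. Capacity constraints \eqref{eq:fluid-cap} pass to the limit because \(\sum_i X_{\ell,i}^n\le \lfloor nn_\ell\rfloor\) is preserved under convergence. Initial conditions follow from Assumption~\ref{ass:fluid-init}, since \(L^1\) convergence implies convergence in probability.

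The main obstacle is the policy-dependent quantities: the admission decisions and the routing count \(R_i^n\) are not driven by primitives. A nonpreemptive FCFS policy also need not be work-conserving, so I do not claim any complementarity condition. I would keep these quantities as extra coordinates of the process and obtain tightness purely from monotonicity and the increment domination above. The bound on actions per event independent of \(n\) is exactly what rules out jumps of order \(n\) and keeps the limits continuous. Positivity of the obsolescence rates under Assumption~\ref{ass:regular_params} is not needed on a finite horizon, only finiteness.
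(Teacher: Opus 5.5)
\textbf{Verdict: gap in tightness.} Your overall route is the paper's. You bound the queues a priori from the node balances and the FSLLN, identify the uncontrolled flows by random time change, and recover the routing split as a density with $\dot{\bar R}_i\le\mu_{g,i}\bar X_i$; the paper gets the same $v_i$ by Radon--Nikodym. You then pass the balances, capacities, and initial data to the limit.

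\textbf{The step that fails.} The modulus bound for the admission processes $U^n_{\ell,i}$ does not hold as you argue it, and the tightness of $X^n_{\ell,i}$ and $Q^n_{\ell,i}$ rests on it. You assert that cumulative admissions are dominated by completion counts, so their increments over $[s,t]$ are at most $c(t-s)$ plus a vanishing error. This is true for the routing count, which splits generation completions. It is false for admissions under the non-work-conserving policies you explicitly allow. Since $U^n_\ell(t)-U^n_\ell(s)=X^n_\ell(t)-X^n_\ell(s)+D^n_\ell(t)-D^n_\ell(s)$, admissions exceed completions by the change in occupancy, and completions do not control that change.

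\textbf{Counterexample.} Keep every generation server idle on $[0,1]$, then admit one waiting task at each arrival event. On $[1,1+\delta]$ admissions are of order $n\delta$, while generation completions are only of order $n\delta^2$.

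\textbf{Why the obvious repairs do not work.} Bounding admissions by initial queue plus input does not help, because $Q^n(s)$ can be of order $n$. Domination by completions plus input would hold for a non-idling policy, but admissible policies may idle, and Quota Tracking itself does. Using the bounded-decisions-per-event condition only to exclude jumps of order $n$ does not close the gap either. Vanishing maximal jumps upgrade tightness in $D$ to $C$-tightness, but tightness in $D$ already requires control of how many $O(1)$ moves can accumulate in a short interval.

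\textbf{The missing idea.} Use that condition quantitatively, which is what the paper's localized modulus argument does.
\begin{itemize}
\item Decisions occur only at primitive events, at most $b$ per event. Hence $U^n_{\ell,i}(t)-U^n_{\ell,i}(s)\le b\,N^n(s,t]$, where $N^n$ counts all arrivals, service completions, and obsolescences.
\item Work on the event $\{\sup_{t\le T}\sum_{\ell,i}\bar Q^n_{\ell,i}(t)\le M\}$. By your a priori bound, its complement has probability of order $1/M$ uniformly in $n$.
\item On that event the total event intensity is at most $nK_M$. So $N^n(\tau,\tau+h]$ with $h\le\delta$ is dominated by a Poisson variable of mean $nK_M\delta$, and the scaled oscillation of every coordinate is $O_P(\delta)+o_P(1)$.
\item Letting $n\to\infty$, then $\delta\downarrow0$, then $M\uparrow\infty$ gives the modulus condition.
\end{itemize}
In your own terms, the dominating process for admissions is $bN^n$, a finite sum of primitive counts with Lipschitz compensators, not the completion count at the node. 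With that replacement, your Lipschitz-limit conclusion and the rest of the identification go through as in the paper.
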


The fluid equations are stated in Appendix~\ref{app:fluid_model}; the
proof is in Section~EC.2 of the Online Supplement.

\paragraph{Full stationary program.}
The complete planning problem chooses nonnegative waiting levels
\(\hat q_{\ell,i}\), service levels
\(\hat x_i,\hat y_i,\hat z_{i,d},\hat z_{i,s}\), admission rates
\(\hat u_{\ell,i}\), and routing levels \(0\le\hat v_i\le\hat x_i\):
\(\hat x_i\) is generation in service, \(\hat y_i\) is AI review,
\(\hat z_{i,d}\) and \(\hat z_{i,s}\) are maintainer occupancy on the
direct and screened routes, and \(\hat v_i\) is the generation occupancy
routed to review.  It maximizes weighted approved throughput subject to
the stationary node balances and the three resource capacities:
\begin{subequations}\label{eq:full_fluid_program}
\begin{equation}
V_{\mathrm{full}}(n_h):=\max
\sum_i r_i\mu_{h,i}\bigl[(1-\alpha_i)\hat z_{i,d}
+q_{\mathrm{acc},i}\hat z_{i,s}\bigr],
\label{eq:full_fluid_obj}
\end{equation}
\begin{align}
\lambda_i+\mu_{r,i}p_{\mathrm{rej},i}\hat y_i
+\mu_{h,i}\alpha_i\hat z_{i,d}
+\mu_{h,i}(1-q_{\mathrm{acc},i})\hat z_{i,s}
&=\hat u_{g,i}+\theta_i\hat q_{g,i},&
\hat u_{g,i}&=\mu_{g,i}\hat x_i,\nonumber\\
\mu_{g,i}\hat v_i&=\hat u_{r,i}+\theta_{r,i}\hat q_{r,i},&
\hat u_{r,i}&=\mu_{r,i}\hat y_i,\nonumber\\
\mu_{g,i}(\hat x_i-\hat v_i)
&=\hat u_{h_d,i}+\theta_{h_d,i}\hat q_{h_d,i},&
\hat u_{h_d,i}&=\mu_{h,i}\hat z_{i,d},\nonumber\\
\mu_{r,i}p_{\mathrm{pass},i}\hat y_i
&=\hat u_{h_s,i}+\theta_{h_s,i}\hat q_{h_s,i},&
\hat u_{h_s,i}&=\mu_{h,i}\hat z_{i,s},
\label{eq:fluid-ss-balances}
\end{align}
\begin{equation}
\sum_i\hat x_i\le n_g,\qquad
\sum_i\hat y_i\le n_r,\qquad
\sum_i(\hat z_{i,d}+\hat z_{i,s})\le n_h.
\label{eq:fluid-ss-cap}
\end{equation}
\end{subequations}
The four rows are generation, AI review, direct approval, and post-review
approval.  Revision returns enter on the left-hand side; obsolescence
appears as \(\theta\hat q\) on the right.

\paragraph{Two processing routes.}
For maximum throughput, queues and revision loops enter
\eqref{eq:full_fluid_program} only through the expected capacity consumed
per approved change.  Put \(m_i=1-\alpha_i\) and \(t_i=1-\beta_i^{(I)}\),
the probabilities that an attempt is merge-ready and that a merge-ready
attempt survives AI review.
Let \(f_i^D\) and \(f_i^S\) be the approval rates on the direct and
screened routes.  One direct approval earns \(r_i\) and consumes
\begin{equation}
a_i^D=\left(\frac{1}{\mu_{g,i}m_i},\,0,\,
\frac{1}{\mu_{h,i}m_i}\right)
\label{eq:activity_direct_vector}
\end{equation}
units of generation, AI-review, and human-approval capacity, because on
average \(1/m_i\) attempts are generated and inspected per approval.  One screened
approval earns the same reward and consumes
\begin{equation}
a_i^S=\left(\frac{1}{\mu_{g,i}m_it_i},\,
\frac{1}{\mu_{r,i}m_it_i},\,
\frac{1}{\mu_{h,i}q_{\mathrm{acc},i}}\right).
\label{eq:activity_screened_vector}
\end{equation}
Screening requires \(1/(m_it_i)\) attempts per approval, because false
rejections regenerate merge-ready changes, and each attempt is reviewed.  Its
benefit is in the last coordinate.  Maintainer time per approval falls from
\(1/(\mu_{h,i}m_i)\) to \(1/(\mu_{h,i}q_{\mathrm{acc},i})\), which is a
reduction exactly when the reviewer is informative.

\paragraph{Capacity-allocation problem.}
The equivalent planning problem in approved flows is
\begin{subequations}\label{eq:activity_lp}
\begin{align}
V(n_h):=\max_{f^D,f^S\ge0}\quad&
\sum_i r_i(f_i^D+f_i^S)\label{eq:activity_obj}\\
\text{s.t.}\quad&
\sum_i\left(\frac{f_i^D}{\mu_{g,i}m_i}
+\frac{f_i^S}{\mu_{g,i}m_it_i}\right)\le n_g,\label{eq:activity_worker}\\
&\sum_i\frac{f_i^S}{\mu_{r,i}m_it_i}\le n_r,\label{eq:activity_judge}\\
&\sum_i\left(\frac{f_i^D}{\mu_{h,i}m_i}
+\frac{f_i^S}{\mu_{h,i}q_{\mathrm{acc},i}}\right)\le n_h,\label{eq:activity_expert}\\
&f_i^D+f_i^S\le\lambda_i,\qquad i\in\mathcal I.\label{eq:activity_arrival}
\end{align}
\end{subequations}
The first three constraints are the generation, AI-review, and
human-approval capacities; the last prevents approvals from exceeding
arrivals.  We write \(V(n_h)\) because maintainer capacity is the staffing
lever we study most closely in what follows.

\begin{lemma}[Reduction to approved flows]
\label{prop:approved_flow_reduction}
For every feasible point of \eqref{eq:full_fluid_program}, the flows
\(f_i^D=\mu_{h,i}m_i\hat z_{i,d}\) and
\(f_i^S=\mu_{h,i}q_{\mathrm{acc},i}\hat z_{i,s}\) are feasible for
\eqref{eq:activity_lp} and earn the same reward.  Conversely, every feasible
\((f^D,f^S)\) in \eqref{eq:activity_lp} is realized by a feasible point of
the full program with empty review and approval queues.  Hence
\(V_{\mathrm{full}}(n_h)=V(n_h)\).
\end{lemma}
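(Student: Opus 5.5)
The plan has two directions: push any stationary point of \eqref{eq:full_fluid_program} forward to approved flows, and lift any feasible flow back to a stationary point with empty review and approval queues. Equality of values follows once both maps preserve reward.

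\emph{Forward direction.} Fix a feasible point and set \(f_i^D=\mu_{h,i}m_i\hat z_{i,d}\) and \(f_i^S=\mu_{h,i}q_{\mathrm{acc},i}\hat z_{i,s}\). Reward equality is immediate from \eqref{eq:full_fluid_obj}. The human constraint \eqref{eq:activity_expert} is exactly \(\sum_i(\hat z_{i,d}+\hat z_{i,s})\le n_h\) rewritten.

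For the review constraint, the post-review balance gives \(\mu_{h,i}\hat z_{i,s}\le\mu_{r,i}p_{\mathrm{pass},i}\hat y_i\), since \(\hat q_{h_s,i}\ge0\). Using \(q_{\mathrm{acc},i}p_{\mathrm{pass},i}=m_it_i\), this yields \(f_i^S\le\mu_{r,i}m_it_i\hat y_i\), so \eqref{eq:activity_judge} follows from \(\sum_i\hat y_i\le n_r\).

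The generation constraint is the key step. I must show \(\hat x_i\ge f_i^D/(\mu_{g,i}m_i)+f_i^S/(\mu_{g,i}m_it_i)\). I would count merge-ready flow. Generation produces merge-ready attempts at rate \(\mu_{g,i}m_i\hat x_i\). The routing split sends merge-ready mass \(\mu_{g,i}m_i\hat v_i\) to review and \(\mu_{g,i}m_i(\hat x_i-\hat v_i)\) to direct approval. Here I use that routing is quality-blind, so both streams have merge-ready share \(m_i\); the human-rejection terms in the balance encode this.
\begin{itemize}
\item \textbf{Direct route.} Direct approvals equal \(m_i\mu_{h,i}\hat z_{i,d}\le m_i\mu_{g,i}(\hat x_i-\hat v_i)\).
\item \textbf{Screened route.} Screened approvals satisfy \(f_i^S\le t_i m_i\mu_{r,i}\hat y_i\) and \(\mu_{r,i}\hat y_i\le\mu_{g,i}\hat v_i\).
\end{itemize}
Dividing each inequality by its route factor and adding gives \(f_i^D/m_i+f_i^S/(m_it_i)\le\mu_{g,i}\hat x_i\). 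Summing over \(i\) gives \eqref{eq:activity_worker}.

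For the arrival constraint, combining the generation balance with the rejection terms shows that net completions cannot exceed \(\lambda_i\). Substituting the route flows, the return terms \(\mu_{r,i}p_{\mathrm{rej},i}\hat y_i\), \(\mu_{h,i}\alpha_i\hat z_{i,d}\) and \(\mu_{h,i}(1-q_{\mathrm{acc},i})\hat z_{i,s}\) cancel against the corresponding portions of \(\mu_{g,i}\hat x_i\). What remains is
\[
\lambda_i=f_i^D+f_i^S+\theta_i\hat q_{g,i}+\theta_{r,i}\hat q_{r,i}+\theta_{h_d,i}\hat q_{h_d,i}+\theta_{h_s,i}\hat q_{h_s,i}.
\]
Every obsolescence term on the right is nonnegative, so \(f_i^D+f_i^S\le\lambda_i\).

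\emph{Converse direction.} Given feasible \((f^D,f^S)\), I set the review and approval queues to zero and define the service levels
\[
\hat z_{i,d}=\frac{f_i^D}{\mu_{h,i}m_i},\qquad
\hat z_{i,s}=\frac{f_i^S}{\mu_{h,i}q_{\mathrm{acc},i}},\qquad
\hat y_i=\frac{f_i^S}{\mu_{r,i}m_it_i}.
\]
I then take \(\hat v_i=\mu_{r,i}\hat y_i/\mu_{g,i}\), \(\hat x_i=\hat v_i+\mu_{h,i}\hat z_{i,d}/\mu_{g,i}\), and \(\hat q_{g,i}=(\lambda_i-f_i^D-f_i^S)/\theta_i\ge0\). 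The last three balances hold by construction. The generation balance reduces to the identity displayed above with zero downstream queues. The three capacity constraints are exactly \eqref{eq:activity_worker}--\eqref{eq:activity_expert}, and \(0\le\hat v_i\le\hat x_i\) is clear.

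The main obstacle is the algebra in the generation balance, namely verifying that the rejection terms cancel to give \(\lambda_i=\) approvals \(+\) obsolescence. In the forward direction it must be done with inequalities, because downstream queues are present. I would first prove the general flow-conservation identity from the four balances, and then use it for both directions.
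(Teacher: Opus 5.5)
Your proof is correct and follows the paper's argument. The paper also uses nonnegative downstream obsolescence in the node balances to bound maintainer service on each route by the upstream output that feeds it; its useful-service variable \(v_i=\mu_{h,i}\hat z_{i,s}/(\mu_{g,i}p_{\mathrm{pass},i})\) is exactly your \(f_i^S/(\mu_{g,i}m_it_i)\). Like you, it takes the arrival cap from the class-level mass balance and lifts back using empty downstream queues and \(\hat q_{g,i}=(\lambda_i-f_i^D-f_i^S)/\theta_i\); the only difference is cosmetic, since you check the approved-flow constraints directly and write out the conservation identity \(\lambda_i=f_i^D+f_i^S+\sum_{\ell}\theta_{\ell,i}\hat q_{\ell,i}\), which the paper states in one line.
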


The vectors \(a_i^D\) and \(a_i^S\) already include the expected capacity
per approved change; excess arrivals wait in the generation queue.  The
equivalence concerns maximum throughput only; downstream obsolescence
reappears in the stability analysis of
Section~\ref{sec:stoch_control}.

\section{Optimal AI-Review Allocation}
\label{sec:canonical}

Problem~\eqref{eq:activity_lp} is a linear program, so its solution is
governed by which capacities bind.  We first solve it for a single category,
where the only decision is how much work to review, and then characterize
how the allocation across heterogeneous categories responds to maintainer
staffing.

\subsection{Single Class: How Much to Review}
\label{subsec:single_class}

With one class we drop the class index; two conditions describe the regime.

\begin{assumption}[Generation overload]
\label{ass:overloaded_single}
Arrivals exceed capacity: \(\lambda\ge\mu_g(1-\alpha)n_g\).
\end{assumption}

\begin{assumption}[Informative review]
\label{ass:informative_single}
Review is informative and imperfect:
\(q_{\mathrm{acc}}>1-\alpha\), \(\beta^{(I)}>0\).
\end{assumption}

Assumption~\ref{ass:overloaded_single} makes the arrival
constraint~\eqref{eq:activity_arrival} redundant.
Assumption~\ref{ass:informative_single} makes review worth considering, at
the cost of a fraction \(\beta^{(I)}\) of merge-ready work per reviewed
unit; an uninformative reviewer is bypassed and leaves nothing to decide.

\paragraph{A two-variable form.}
Measure AI-review and human capacity in generation-equivalent units,
\begin{equation}
E := \frac{\mu_h}{\mu_g}\,n_h,
\qquad
J := \frac{\mu_r}{\mu_g}\,n_r,
\qquad
J_{\mathrm{eff}} := \min\{n_g,\,J\}.
\label{eq:sc_normalized_capacities}
\end{equation}
Let \(x\) be the generation capacity in use and \(v\le x\) the part whose
output is sent to AI review, so that the review fraction is
\(\phi=v/x\).  Dividing the objective of \eqref{eq:activity_lp} by
\(r\mu_g(1-\alpha)\) yields the reduced program
\begin{equation}
\max_{x,v}\ x-\beta^{(I)}v
\quad\text{s.t.}\quad
0\le v\le x\le n_g,\quad v\le J,\quad x-p_{\mathrm{rej}}v\le E.
\label{eq:sc_lp}
\end{equation}
The objective is generation in use net of the merge-ready work that review
sends back.  The three upper bounds are generation, AI review, and human
approval; the last one shows how review relieves maintainers, since each
reviewed unit removes \(p_{\mathrm{rej}}\) units of human load.

\paragraph{Thresholds.}
Three maintainer-capacity levels separate the operating phases:
\begin{equation}
n_h^{(1)} := \frac{\mu_g}{\mu_h}\,p_{\mathrm{pass}}\,J_{\mathrm{eff}},
\quad
n_h^{(2)} := \frac{\mu_g}{\mu_h}\,\big(n_g - p_{\mathrm{rej}}\,J_{\mathrm{eff}}\big),
\quad
n_h^{(3)} := \frac{\mu_g}{\mu_h}\,n_g.
\label{eq:sc_thresholds_123}
\end{equation}
They satisfy \(n_h^{(1)}\le n_h^{(2)}\le n_h^{(3)}\), strictly when
\(0<J_{\mathrm{eff}}<n_g\).  The first is the maintainer capacity that
absorbs all screened output when everything is reviewed; the second is where
generation saturates; the third absorbs all generated output without review.

\begin{proposition}[Four-phase structure]
\label{prop:four_phase}
Under Assumptions~\ref{ass:overloaded_single}
and~\ref{ass:informative_single}, suppose \(0<J_{\mathrm{eff}}<n_g\) and
\(n_h>0\).  The optimal review fraction \(\phi^*=v^*/x^*\) is
\begin{enumerate}
\item \(\phi^*=1\) if \(n_h\le n_h^{(1)}\) \emph{(full review)};
\item \(\phi^*=J_{\mathrm{eff}}/(E+p_{\mathrm{rej}}J_{\mathrm{eff}})\)
if \(n_h^{(1)}\le n_h\le n_h^{(2)}\) \emph{(reviewer saturation)};
\item \(\phi^*=(n_g-E)/(p_{\mathrm{rej}}n_g)\)
if \(n_h^{(2)}\le n_h\le n_h^{(3)}\) \emph{(active throttling)};
\item \(\phi^*=0\) if \(n_h\ge n_h^{(3)}\) \emph{(bypass)}.
\end{enumerate}
\end{proposition}

\begin{figure}[htbp]
\centering
\includegraphics[width=0.84\textwidth]{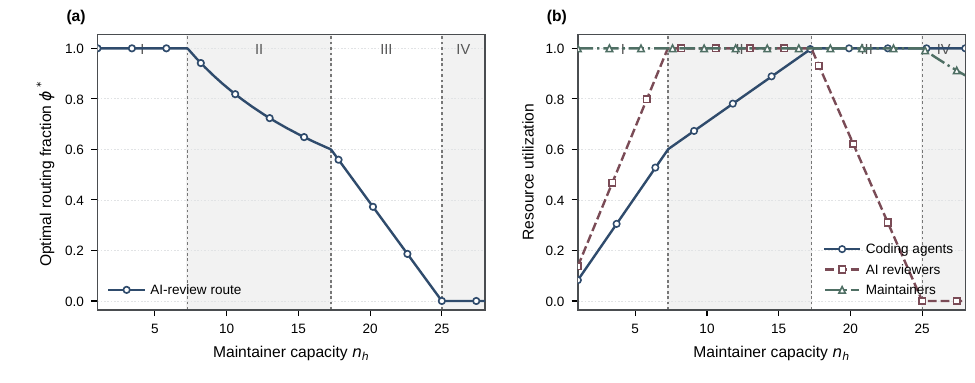}
\caption{Four-phase structure under the calibrated medium-quality error
profile.  Panel (a) shows the optimal review fraction; Panel (b) shows which
resource binds.  Vertical guides are the thresholds
in~\eqref{eq:sc_thresholds_123}.}
\label{fig:calibrated_phases}
\end{figure}

Each phase corresponds to a different binding resource
(Figure~\ref{fig:calibrated_phases}).  In Phases~1 and~2 maintainers are
the bottleneck and review protects them: with little maintainer capacity
every generated change is reviewed, because review converts one unit of
output into only \(p_{\mathrm{pass}}<1\) units of approval work, and once
the reviewer is saturated (\(v^*=J_{\mathrm{eff}}\)) additional maintainer
capacity is filled through the direct route, so the falling review fraction
is a change in composition rather than a decision to send less work to
review.  Phase~3 is different in kind.  Generation is now the bottleneck
(\(x^*=n_g\)) and every false rejection displaces a new task, so the plan
cuts reviewed volume itself, \(v^*=(n_g-E)/p_{\mathrm{rej}}\), leaving an
informative reviewer idle to protect generation.  In Phase~4 maintainers
absorb all generated output directly and review is bypassed.  The width of
Phase~3, \(n_h^{(3)}-n_h^{(2)}=(\mu_g/\mu_h)p_{\mathrm{rej}}J_{\mathrm{eff}}\),
grows with reviewer capacity, because a larger reviewer can create more
rework for the plan to throttle; Phase~2 vanishes as \(J_{\mathrm{eff}}\)
approaches \(n_g\).

\subsection{Multiple Classes: Which Categories to Review}
\label{subsec:multiclass}

With several categories the operator must also decide which ones receive
scarce review capacity.  Post-review quality alone is an unreliable guide,
because a more accurate reviewer may cost more regeneration, reviewer time,
or maintainer time per approved change; duality compares reward net of
these three costs at their current prices.

\paragraph{Resource prices.}
Let \(\omega,\kappa,\chi\ge0\) be the prices of generation, AI-review, and
human-approval capacity, and let \(\nu_i\ge0\) be the price of class
\(i\)'s arrival cap.  Writing \(a_{g,i}^m,a_{r,i}^m,a_{h,i}^m\) for the
coordinates of \(a_i^m\), the dual inequalities for the two routes of
class \(i\) read
\begin{align}
\omega a_{g,i}^D+\chi a_{h,i}^D+\nu_i&\ge r_i,
\label{eq:dual_direct}\\
\omega a_{g,i}^S+\kappa a_{r,i}^S+\chi a_{h,i}^S+\nu_i&\ge r_i,
\label{eq:dual_screened}
\end{align}
with equality on any route that carries flow.  Rearranging
\eqref{eq:dual_screened} defines the \emph{reviewer index}
\begin{equation}
\mathcal I_i(\omega,\chi,\nu_i)=
\frac{r_i-\nu_i-\omega a_{g,i}^S-\chi a_{h,i}^S}{a_{r,i}^S},
\label{eq:general_priority_index}
\end{equation}
the surplus a screened class-\(i\) approval earns after paying for
generation, maintainer time, and its arrival cap, per unit of reviewer time
it uses.  A class is reviewed only if its index attains the reviewer price
\(\kappa\); classes with a lower index are not reviewed.

\begin{proposition}[Sparse multiclass staffing response]\label{thm:multiclass_path}
Fix \(n_g,n_r>0\), finite positive arrival rates, and all primitives other
than \(s:=n_h\).  There is a finite partition
\(0=s_0<s_1<\cdots<s_K<s_{K+1}=\infty\) such that:
\begin{enumerate}
\item \(V(s)\) is nondecreasing, concave, and piecewise affine with
nonincreasing slopes \(\chi_k\ge0\).
\item On each \((s_k,s_{k+1})\), an optimal extreme allocation has the form
\(f^{(k)}(s)=u_k+s\,d_k\) with a fixed set of active routes and a fixed
choice of optimal resource prices.
For every \(s\), some optimal extreme point obeys the sparsity bound
\begin{equation}
U(s)+L(s)\le3,
\label{eq:multiclass_sparsity}
\end{equation}
where \(U(s)\) counts active classes below their arrival caps and \(L(s)\)
counts active classes using both routes.
\item For small \(s>0\), positive flow uses only routes in
\(\arg\max_{i,\;m\in\{D,S\}} r_i/a_{h,i}^m\).  For large \(s\), the
optimum can be taken to be direct-only.
\end{enumerate}
\end{proposition}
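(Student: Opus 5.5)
The plan is to treat \eqref{eq:activity_lp} as a parametric linear program in the right-hand side \(s=n_h\) and read all three parts off standard LP parametric analysis and basic-solution counting. The feasible set is nonempty (zero flow) and bounded (by the arrival caps), so \(V(s)\) is finite for every \(s\ge0\). Nondecreasing is immediate, since raising \(s\) enlarges the feasible set.

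For concavity, I would take optimal solutions at \(s\) and \(s'\). Their convex combination is feasible at the combined right-hand side, so \(V\) is concave. By LP duality, \(V(s)=\min\{\omega n_g+\kappa n_r+\chi s+\sum_i\nu_i\lambda_i\}\) over the dual polyhedron defined by \eqref{eq:dual_direct}--\eqref{eq:dual_screened} and nonnegativity. The dual feasible set is fixed and has finitely many vertices, and the minimum is attained at a vertex because the dual objective is bounded below by zero. Hence \(V\) is the minimum of finitely many affine functions of \(s\), with slopes equal to the \(\chi\)-coordinates of those vertices, all of which are \(\ge0\). This gives piecewise affinity with finitely many breakpoints \(s_1<\dots<s_K\) and nonincreasing slopes \(\chi_k\).

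On each open interval I would fix one optimal primal basis \(B_k\). There are finitely many bases, and optimality of a basis (dual feasibility) does not depend on \(s\). The set of \(s\) where a given basis is primal feasible is an interval, because \(B^{-1}b(s)\ge0\) is a system of linear inequalities in \(s\). So \([0,\infty)\) is covered by finitely many closed intervals, and I would refine the partition to their endpoints together with the kinks of \(V\). On each piece the basic solution is \(B_k^{-1}b(s)=u_k+s\,d_k\), the active routes are fixed, and the dual price vector \(c_B^\top B_k^{-1}\) is fixed.

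Sparsity comes from counting in a basic optimal solution. I would add slack variables, so that there are \(3+I\) equality rows and hence at most \(3+I\) basic variables. Any active class \(i\) (with \(f_i^D+f_i^S>0\)) contributes at least one basic flow variable. A class below its cap additionally has a basic, positive arrival slack, and a class using both routes has two basic flows. Counting basic variables row by row therefore gives \(A+U+L\le 3+I\), where \(A\) is the number of active classes. The inactive classes, numbering \(I-A\), each have a basic slack equal to \(\lambda_i>0\), which adds \(I-A\) to the left side and yields \(U+L\le3\). Degenerate zero basic variables only help the inequality. The main care point is the bookkeeping when a class has \(f_i^D+f_i^S=\lambda_i\) with its slack basic at zero. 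This still costs a basic slot, so the inequality survives.

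For part 3 with small \(s\), I would argue that when \(s\) is small, \eqref{eq:activity_expert} is the only binding constraint, because the other constraints are slack at zero flow. The LP then reduces to a fractional knapsack with a single constraint, whose optimum puts all weight on routes maximizing \(r_i/a_{h,i}^m\). For \(s<\min\) of the slacks that would bind otherwise, any optimal solution can be exchanged toward the max-ratio routes. I would make this precise by exhibiting the dual \(\chi=\max r_i/a_{h,i}^m\), \(\omega=\kappa=\nu=0\), and checking complementary slackness.

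For large \(s\), I would take \(s\ge\sum_i\max_m a^m_{h,i}\lambda_i\) so that \eqref{eq:activity_expert} is slack, and apply the pointwise inequality \(a^S_{g,i}\ge a^D_{g,i}\) together with \(a^D_{r,i}=0\). Replacing each screened flow by an equal direct flow keeps every other constraint feasible, because generation and review usage weakly fall, and the reward is unchanged. The resulting direct-only solution is therefore optimal.
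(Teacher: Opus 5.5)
Your proposal is correct and follows essentially the same route as the paper. It represents $V(s)$ via duality as a minimum of finitely many affine functions over an $s$-independent dual polyhedron, and uses the affine dependence of a fixed optimal basis on the right-hand side for Part 2. It obtains $U(s)+L(s)\le 3$ by extreme-point counting; your $3+I$ basic-slot count with slacks is the same count as the paper's positive route variables versus binding constraints. For Part 3 it uses a single-resource knapsack for small $s$ and screened-to-direct dominance for large $s$, as the paper does. Your explicit dual certificate $(\omega,\kappa,\chi,\nu)=(0,0,\max_{i,m} r_i/a_{h,i}^m,0)$ with complementary slackness is a slightly sharper way than the paper's knapsack remark to make the small-$s$ claim hold for every optimal solution.
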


The proposition turns the staffing question into a finite operating map of
breakpoints \(s_k\), active routes, and affine rules \(u_k+s\,d_k\); the
slope \(\chi_k\) is the throughput value of one more maintainer.  The bound
\(U(s)+L(s)\le3\) counts the categories whose allocation is still
adjustable: capped categories need no reallocation and inactive ones
receive nothing, so at most three categories determine the response to a
staffing change, the fact exploited in Section~\ref{sec:stoch_control}.
Figure~\ref{fig:staffing_map}(a) shows such a map.

Proposition~\ref{thm:multiclass_path} says what the map looks like, not
which categories it reviews.  A natural guess is that this part never
changes: rank the categories once by how good their reviewers are, and let
staffing decide only how much of the top category to review.  The next
result checks this guess in the case most favorable to it, where the
categories are identical except for their reviewers.  It needs one more
quantity, \(\eta_i:=\beta_i^{(I)}/p_{\mathrm{rej},i}\), the merge-ready
work that reviewing class \(i\) sends back for each unit of maintainer
work it saves.  We call \(q_{\mathrm{acc},i}\) the \emph{filter quality}
and \(\eta_i\) the \emph{rework cost} of reviewer \(i\); \(E\) and
\(J_{\mathrm{eff}}\) are as in \eqref{eq:sc_normalized_capacities}.

\begin{proposition}[Priority switch]
\label{prop:reviewed_switch}
Let all classes satisfy Assumptions~\ref{ass:overloaded_single}
and~\ref{ass:informative_single}, share \(r,\mu_g,\mu_r,\mu_h,\alpha\), and
differ only in \((\beta_i^{(I)},\beta_i^{(II)})\).  Let \(i_A\) uniquely
maximize \(q_{\mathrm{acc},i}\) and \(i_B\) uniquely minimize \(\eta_i\).
\begin{enumerate}
\item If \(E\le p_{\mathrm{pass},i_A}J_{\mathrm{eff}}\), the unique optimum
reviews all generated class-\(i_A\) work and serves nothing else.
\item If \(n_g-p_{\mathrm{rej},i_B}J_{\mathrm{eff}}\le E<n_g\), every
optimum uses all generation capacity and reviews only class \(i_B\), with
reviewed volume \((n_g-E)/p_{\mathrm{rej},i_B}\).
\end{enumerate}
The first interval lies below the second.  If \(i_A\ne i_B\), the reviewed
category switches as maintainer capacity grows although no reviewer changes.
\end{proposition}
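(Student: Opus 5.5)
The plan is to rewrite \eqref{eq:activity_lp} in the generation-equivalent units of \eqref{eq:sc_lp}, now with one pair of variables per class, and then certify each claimed optimum with an explicit dual price vector. Because all classes share \(r,\mu_g,\mu_r,\mu_h,\alpha\), dividing by \(r\mu_g(1-\alpha)\) gives a clean program. Let \(d_i\ge0\) be the generation occupancy routed directly and \(v_i\ge0\) the occupancy routed to review. The program is
\[
\max\ \sum_i\bigl[d_i+(1-\beta_i^{(I)})v_i\bigr]
\quad\text{s.t.}\quad
\sum_i(d_i+v_i)\le n_g,\quad \sum_i v_i\le J,\quad
\sum_i\bigl(d_i+p_{\mathrm{pass},i}v_i\bigr)\le E.
\]
Assumption~\ref{ass:overloaded_single} for every class removes the arrival caps. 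Since \(\sum_i v_i\le n_g\) already holds, the review constraint may be read with \(J_{\mathrm{eff}}\) in place of \(J\). Let \(\omega,\kappa,\chi\) be the prices of the three constraints. The dual constraints are \(\omega+\chi\ge1\) for each \(d_i\) and \(\omega+\kappa+\chi p_{\mathrm{pass},i}\ge 1-\beta_i^{(I)}\) for each \(v_i\). Throughout I use the identity \(q_{\mathrm{acc},i}/(1-\alpha)=(1-\beta_i^{(I)})/p_{\mathrm{pass},i}\), which follows from \eqref{eq:pf_qacc}.

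\emph{Part 1.} Take \(\omega=\kappa=0\) and \(\chi=(1-\beta_{i_A}^{(I)})/p_{\mathrm{pass},i_A}=q_{\mathrm{acc},i_A}/(1-\alpha)\). This price is strictly greater than \(1\) by Assumption~\ref{ass:informative_single}. Every \(d_i\) therefore has strictly positive reduced cost. Every \(v_i\) with \(i\ne i_A\) also has strictly positive reduced cost, because \(i_A\) is the unique maximizer of \(q_{\mathrm{acc},i}\). The primal point \(v_{i_A}=E/p_{\mathrm{pass},i_A}\), with everything else zero, is feasible precisely when \(E\le p_{\mathrm{pass},i_A}J_{\mathrm{eff}}\). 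It also satisfies \(v_{i_A}\le J_{\mathrm{eff}}<n_g\), so the generation constraint is slack and \(\omega=0\) is consistent. The point makes the human constraint tight, so complementary slackness holds and optimality follows. For uniqueness, any optimum must satisfy complementary slackness with this dual. That forces \(d_i=0\) for all \(i\), forces \(v_i=0\) for all \(i\ne i_A\), and, since \(\chi>0\), forces the human constraint to be tight; together these pin down \(v_{i_A}\).

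\emph{Part 2.} Take \(\kappa=0\), \(\chi=\eta_{i_B}\) and \(\omega=1-\eta_{i_B}\). Here \(\eta_{i_B}<1\) because \(p_{\mathrm{rej},i}-\beta_i^{(I)}=\alpha(1-\beta_i^{(I)}-\beta_i^{(II)})>0\) under informativeness. With these prices each \(d_i\) has zero reduced cost. The reduced cost of \(v_i\) becomes \(\omega+\chi p_{\mathrm{pass},i}-(1-\beta_i^{(I)})=\beta_i^{(I)}-\eta_{i_B}p_{\mathrm{rej},i}=p_{\mathrm{rej},i}(\eta_i-\eta_{i_B})\). This is zero for \(i_B\) and strictly positive otherwise, by uniqueness of the minimizer of \(\eta_i\). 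The candidate primal point is \(v_{i_B}=(n_g-E)/p_{\mathrm{rej},i_B}\) with the remainder of generation sent direct. It is nonnegative since \(E<n_g\). It respects the reviewer limit exactly when \(E\ge n_g-p_{\mathrm{rej},i_B}J_{\mathrm{eff}}\). The direct part is nonnegative because \(v_{i_B}\le J_{\mathrm{eff}}<n_g\). The point saturates both generation and human capacity, so complementary slackness holds. For every optimum, complementary slackness with this dual gives \(v_i=0\) for \(i\ne i_B\). Because \(\omega>0\) and \(\chi>0\) (note \(\beta_{i_B}^{(I)}>0\)), it also forces both the generation and human constraints to be tight. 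Subtracting the two tight constraints fixes \(v_{i_B}\). The split of the direct flow across identical classes may be non-unique, but that does not affect the claim.

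\emph{Ordering.} The step I expect to need care is showing that the first interval lies below the second, because \(p_{\mathrm{pass},i_A}+p_{\mathrm{rej},i_B}\) need not be at most \(1\), so direct comparison of the endpoints fails. I would argue through uniqueness instead. Suppose some \(E\) lay in both intervals. Part 1 gives a unique optimum using total generation \(E/p_{\mathrm{pass},i_A}\le J_{\mathrm{eff}}<n_g\). Part 2 says every optimum uses all of \(n_g\). These statements contradict each other, so the intervals are disjoint. The first interval is an initial segment \((0,p_{\mathrm{pass},i_A}J_{\mathrm{eff}}]\) and the second is bounded above by \(n_g\); two disjoint sets of this form force the first to lie entirely below the second. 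The switch statement is then immediate, since class \(i_A\) is reviewed in the first range and class \(i_B\ne i_A\) in the second. If \(J_{\mathrm{eff}}=n_g\) is allowed, I would note that the second interval degenerates to \(E\ge n_g(1-p_{\mathrm{rej},i_B})\), and I would check that case separately.
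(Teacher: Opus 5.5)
Your Parts~1 and~2 are the paper's proof written as dual certificates. The price vectors \((\omega,\kappa,\chi)=(0,0,q_{\mathrm{acc},i_A}/(1-\alpha))\) and \((1-\eta_{i_B},0,\eta_{i_B})\) are exactly the weak-duality multipliers behind the paper's Bounds~A and~B; the paper names these price regimes at the end of its proof. Your complementary-slackness uniqueness step is its ``equality only if'' clause.

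The ordering is where you take a different route. The paper notes that the two bounds are affine in \(E\) with slopes \(q_{\mathrm{acc},i_A}/(1-\alpha)>1\) and \(\eta_{i_B}<1\). The intervals can therefore share at most one point, which gives the weak inequality \(p_{\mathrm{pass},i_A}J_{\mathrm{eff}}\le n_g-p_{\mathrm{rej},i_B}J_{\mathrm{eff}}\). You instead set Part~1's unique optimum, which uses generation \(E/p_{\mathrm{pass},i_A}\le J_{\mathrm{eff}}\), against Part~2's full use of generation. This is valid, and it gives strict separation when \(J_{\mathrm{eff}}<n_g\). The case \(J_{\mathrm{eff}}=n_g\), which you left to check, closes the same way. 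A common point must satisfy \(E/p_{\mathrm{pass},i_A}=n_g\), so any overlap is the single point \(E=p_{\mathrm{pass},i_A}n_g\), and that is the paper's weak ordering.

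Your stated reason for avoiding the direct endpoint comparison is false, although nothing downstream depends on it. Under the hypotheses, \(p_{\mathrm{pass},i_A}+p_{\mathrm{rej},i_B}\le1\) always holds. For common \(\alpha\), \(\eta_i\) decreases in \((1-\beta_i^{(II)})/\beta_i^{(I)}\) and \(q_{\mathrm{acc},i}\) decreases in \(\beta_i^{(II)}/(1-\beta_i^{(I)})\). Suppose \(\beta_{i_B}^{(I)}>\beta_{i_A}^{(I)}\).
\begin{itemize}
\item The \(\eta\) ranking gives \(\beta_{i_A}^{(II)}-\beta_{i_B}^{(II)}\ge\bigl[(1-\beta_{i_A}^{(II)})/\beta_{i_A}^{(I)}\bigr](\beta_{i_B}^{(I)}-\beta_{i_A}^{(I)})\).
\item The \(q_{\mathrm{acc}}\) ranking gives \(\beta_{i_A}^{(II)}-\beta_{i_B}^{(II)}\le\bigl[\beta_{i_A}^{(II)}/(1-\beta_{i_A}^{(I)})\bigr](\beta_{i_B}^{(I)}-\beta_{i_A}^{(I)})\).
\end{itemize}
Together these force \(1-\beta_{i_A}^{(I)}-\beta_{i_A}^{(II)}\le0\), which contradicts informativeness. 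Hence \(\beta_{i_B}^{(I)}\le\beta_{i_A}^{(I)}\). The \(q_{\mathrm{acc}}\) ranking then gives \(\beta_{i_B}^{(II)}\ge\beta_{i_A}^{(II)}\), and so \(p_{\mathrm{rej},i_B}\le p_{\mathrm{rej},i_A}\).

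It follows that \((p_{\mathrm{pass},i_A}+p_{\mathrm{rej},i_B})J_{\mathrm{eff}}\le J_{\mathrm{eff}}\le n_g\). This is the ordering itself, and it holds for every \(J_{\mathrm{eff}}\le n_g\) at once. It also makes precise the paper's reading that the rework-cheap reviewer \(i_B\) has fewer false rejections but more false acceptances than \(i_A\).
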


Even here no single ranking works, because the two ends of the map care
about different mistakes.  With few maintainers, every approval must earn
as much as possible per maintainer hour, so the best reviewer is the one
that lets the fewest defective changes through for each merge-ready change
it passes: \(q_{\mathrm{acc}}\) ranks by false acceptance.  Once generation
is the bottleneck, every review is paid for with regeneration, so the best
reviewer is the one that sends back the fewest merge-ready changes for each
defect it catches: \(\eta\) ranks by false rejection.
Figure~\ref{fig:staffing_map} is that statement for three reviewers whose
error rates sit near the calibrated tiers of
Section~\ref{subsec:calibration}.  In panel (b), L is \(i_A\) and the two
lines are its rankings: the solid line is who matches it on
\(q_{\mathrm{acc}}\), the dashed line who matches it on \(\eta\).  H lies
in the wedge, so it loses the first ranking and wins the second
(\(i_B=\mathrm{H}\)); M loses both and is never reviewed.  The arrow is
the middle interval of panel (a).  As maintainers are added the shadow
price of approval falls and that of generation rises, so the relevant
ranking rotates from \(q_{\mathrm{acc}}\) toward \(\eta\) and review
moves from \(i_A\) to \(i_B\).  The two ends of panel (a) are the
intervals in Proposition~\ref{prop:reviewed_switch}; between them both
are reviewed.  One accuracy
number cannot capture this; an operator must track both error rates and
let the bottleneck decide which one matters.  With unequal rewards and
service rates the prices in \eqref{eq:general_priority_index} make the same
comparison, and Section~\ref{subsec:exp_calibrated_which} measures the cost
of ranking once by either statistic.

\begin{figure}[htbp]
\centering
\includegraphics[width=0.96\textwidth]{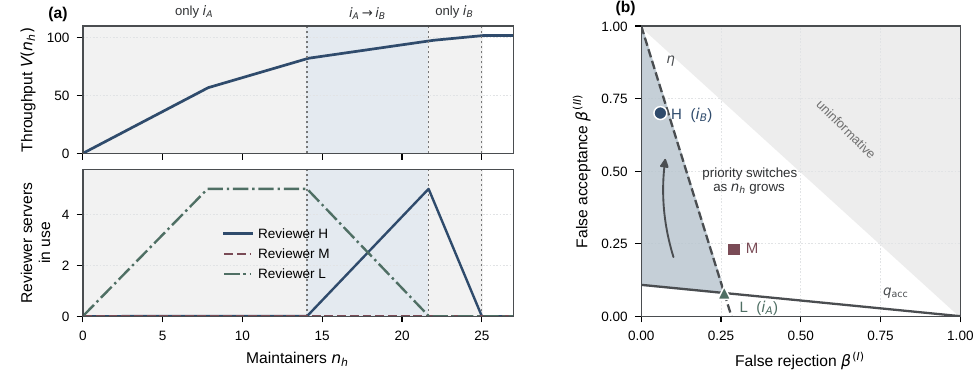}
\caption{Which reviewer to use depends on the bottleneck (illustrative
instance near the calibrated error rates; categories identical except for
reviewers H, M, L).
(a) Only \(i_A\), then both as the priority switches, then only \(i_B\).
(b) Why \(i_A\ne i_B\): H is a worse filter than L but cheaper in rework.
The arrow is the priority switch in (a).}
\label{fig:staffing_map}
\end{figure}

\section{Quota Tracking}
\label{sec:stoch_control}

The staffing map prescribes average flows.  The operating system must
decide at every completion and at every server that becomes free, starting
from whatever state it is in.  Three properties of the map from
Section~\ref{sec:canonical} shape the policy, and each motivates one design
choice.

First, by Propositions~\ref{thm:multiclass_path}
and~\ref{prop:reviewed_switch} the optimal flows are piecewise affine in
staffing and the reviewed category can switch at a breakpoint, so the policy
stores the whole map and reads the current interval, rather than a fixed
ranking.  Second, the plan is fractional.  In Phases~2
and~3 of Proposition~\ref{prop:four_phase} the review fraction lies strictly
between zero and one, and by \eqref{eq:multiclass_sparsity} at most three
categories are split across routes.  The simplest way to realize a fraction
\(\phi_i^*\) is randomized routing: solve the LP, then send each generation
completion to review independently with probability \(\phi_i^*\).  This
attains the planning bound in the fluid limit just as the policy below does,
but after \(k\) completions the realized count deviates from \(\phi_i^*k\)
by order \(\sqrt k\), and each excess review feeds false rejections back
into the generation queue.  The policy instead keeps a cumulative quota for
each split category, so the deviation never exceeds one and at most three
counters are ever live; this is a finite-system refinement rather than a
first-order gain, and Theorem~\ref{thm:asymptotic_optimality} does not
distinguish the two.  Third, in Phase~3 the plan holds informative review
capacity idle, and its server shares are fractional.  Any work-conserving
rule fills the idle share and recreates the rework that
Proposition~\ref{prop:four_phase} shows to be harmful, so idleness must be
planned; fixed integer subpools would plan it but lose capacity to rounding,
so the policy pools each server group and gives planned idleness its own
quota.  Every planned
fraction thus becomes a running count of what the plan has earned so far,
and work is admitted so that realized counts stay within a bounded distance
of these quotas; hence the name Quota Tracking.

\subsection{Policy}
\label{subsec:policy-design}

\paragraph{Offline staffing map.}
Compute the breakpoints, active routes, prices, and affine rules of
Proposition~\ref{thm:multiclass_path}.  At the observed maintainer capacity
\(s=n_h\), look up its interval and set \(f^*=u_k+s\,d_k\); at a tie, select
one optimal extreme point by a fixed rule and keep it until the inputs
change.  Convert the flows into generation targets and a review fraction,
\begin{equation}
v_i^*=\frac{f_i^{S,*}}{\mu_{g,i}m_it_i},\qquad
x_i^*=v_i^*+\frac{f_i^{D,*}}{\mu_{g,i}m_i},\qquad
\phi_i^*=\begin{cases}v_i^*/x_i^*,&x_i^*>0,\\0,&x_i^*=0,\end{cases}
\label{eq:phi_star}
\end{equation}
and into the servers the plan keeps busy, per unit scale, at each node:
\begin{equation}
y_i^*=\frac{\mu_{g,i}}{\mu_{r,i}}v_i^*,\qquad
z_{i,d}^*=\frac{\mu_{g,i}}{\mu_{h,i}}(x_i^*-v_i^*),\qquad
z_{i,s}^*=\frac{\mu_{g,i}}{\mu_{h,i}}p_{\mathrm{pass},i}v_i^*.
\label{eq:service_targets}
\end{equation}
Together with \(x_i^*\), these are the service targets, and \(b_c^*\) is
the target of a generic pool coordinate \(c\), meaning one class at one node.

\paragraph{Deterministic route quotas.}
Let \(G_i^n\) count class-\(i\) generation completions already routed and
\(H_i^n\) those sent to AI review.  Send the next completion to review
exactly when \(H_i^n<\lfloor\phi_i^*(G_i^n+1)\rfloor\).  Then
\(|H_i^n-\phi_i^*G_i^n|<1\) after every decision.  Categories with
\(\phi_i^*\in\{0,1\}\) need no counter, so at most three counters are live.

\paragraph{Stabilize, then pool.}
From a cold start, round the service targets into capacity-feasible integer
reservations and refill each reservation whenever it has an open slot and
waiting work.  After each event, each pool makes at most one additional
catch-up admission to its largest reservation shortfall, so initialization
never creates a fluid-scale batch move; a coordinate above its reservation
receives no new admission until it drains.  Fix \(\delta_n\downarrow0\)
with \(n\delta_n\to\infty\).  Release the reservations when every service
coordinate is within \(n\delta_n\) of its target, and return to
stabilization only if some coordinate later deviates by more than
\(2n\delta_n\).  In pooled mode, for each coordinate \(c\) with \(X_c^n(t)\)
servers in use, track the cumulative effort deficit
\begin{equation}
D_c^n(t)=nb_c^*t-\int_0^tX_c^n(s)\,ds,
\label{eq:effort_deficit}
\end{equation}
reset to zero when pooled mode begins, and include an idle coordinate whose
target is the pool's planned unused capacity.  When a server becomes free,
assign it to the nonempty coordinate, or to idle, with the largest deficit
\(D_c^n(t)\), breaking ties by the largest shortfall \(nb_c^*-X_c^n(t)\);
a server assigned to idle is reassigned by the same rule when work arrives
to its pool.  Pooled mode thus time-shares fractional targets without
partitioning servers, and the idle coordinate enforces the Phase~3
prescription: a free reviewer goes idle rather than to waiting work
whenever the plan has already earned that idle share.

\begin{remark}[Computational cost]\label{rem:complexity}
Let \(I=|\mathcal I|\) and let \(K\) be the number of breakpoints in
Proposition~\ref{thm:multiclass_path}.  Offline, the map is a parametric
linear program in \(2I\) variables and \(I+3\) constraints, one basis per
staffing interval.  Online at a fixed staffing level no optimization is
solved: each generation completion updates one counter, at most three
counters are live, and each freed server compares the deficits in its pool,
\(O(I)\) arithmetic.  Re-staffing adds an interval lookup, \(O(\log K)\),
and a recomputation of \eqref{eq:phi_star}--\eqref{eq:service_targets},
\(O(I)\); within an interval only the targets of at most three categories
move, so stabilization restarts with every other reservation at target.  The
map itself is recomputed only when the primitives change.
\end{remark}

\subsection{Performance Guarantee}
\label{subsec:asymptotic_opt}

The guarantee takes the system scale to infinity on each fixed horizon and
then the horizon to infinity.  Let \(\mathcal X^*\) denote the stationary
state that realizes the selected optimum: service coordinates at the
targets \eqref{eq:phi_star}--\eqref{eq:service_targets}, empty review and
approval queues, and a generation queue holding the arrivals the plan does
not approve until they become obsolete (empty for a capped category).

\begin{theorem}[Cold-start convergence and asymptotic optimality]
\label{thm:asymptotic_optimality}
Select an optimal extreme point of \eqref{eq:activity_lp} by the fixed
tie-breaking rule and let \(R^*=V(n_h)\).  Under
Assumptions~\ref{ass:regular_params} and~\ref{ass:fluid-init}, let
\(\boldsymbol{\pi}^{n,*}\) denote
Quota Tracking started from any initial-state sequence satisfying
Assumption~\ref{ass:fluid-init}.  Every fluid limit of the queue and
service state converges to \(\mathcal X^*\), and
\begin{equation}
\label{eq:asymptotic_opt}
\liminf_{T\to\infty}\;\liminf_{n\to\infty}\; \frac{1}{nT}\,\mathbb{E}^{\boldsymbol{\pi}^{n,*}}\Big[\sum_{i\in\mathcal{I}} r_i\, C_i^n(T)\Big] \;=\; R^*.
\end{equation}
\end{theorem}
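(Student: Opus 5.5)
The proof runs in two halves: an upper bound valid for every admissible policy, and attainment by Quota Tracking through a fluid-limit argument. \textbf{Upper bound.} Fix any policy sequence and horizon \(T\). By Theorem~\ref{thm:fluid-limit} every subsequential limit of the fluid-scaled process is a fluid trajectory obeying \eqref{eq:fluid-balance}--\eqref{eq:fluid-cap}. Cumulative approvals are bounded by integrals of service occupancies, which are themselves bounded by capacities, so \(\{C_i^n(T)/n\}\) is uniformly integrable. Hence expectations converge along subsequences to the fluid approved amounts \(\bar C_i(T)\). Divide the fluid balance equations by \(T\) and time-average. The averaged occupancies then satisfy the stationary balances \eqref{eq:fluid-ss-balances} up to an error term \((\text{state}(T)-\text{state}(0))/T=O(1/T)\), because the fluid state is bounded on compacts by capacities plus arrivals with obsolescence. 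By Lemma~\ref{prop:approved_flow_reduction} and LP sensitivity, this gives \(\sum_i r_i\bar C_i(T)/T\le V(n_h)+O(1/T)\). The bound yields the \(\limsup\le R^*\) half of \eqref{eq:asymptotic_opt}.

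\textbf{Attainment.} Under \(\boldsymbol\pi^{n,*}\) I would establish three fluid properties, in order. (i) \emph{Stabilization phase.} In the fluid limit, every reservation coordinate either stays at its integer-rounded target or drains. Coordinates above target receive no admissions and lose mass at rate at least \(\mu\min\) times their excess. Downstream queues decay because of the strictly positive \(\theta_{r,i},\theta_{h_d,i},\theta_{h_s,i}\) in Assumption~\ref{ass:regular_params}. Shortfall coordinates fill because each pool makes catch-up admissions and the generation queue is fed by overload or by the capped arrival rate. A linear Lyapunov function, the sum of absolute deviations from \(\mathcal X^*\) weighted by service rates, then decreases at a rate bounded below outside a neighborhood. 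Hence the fluid state reaches every \(\delta\)-neighborhood in finite time \(\tau(\delta)\), uniformly over bounded initial states. Since \(n\delta_n\to\infty\) while fluctuations are \(o(n)\) on compacts, the switch to pooled mode occurs in fluid time at most \(\tau\), and the \(2n\delta_n\) hysteresis is never triggered again with probability tending to one. (ii) \emph{Routing.} The deterministic quota gives \(|H_i^n-\phi_i^*G_i^n|<1\), so in the limit the routed flow is exactly \(\phi_i^*\) times the generation completion flow. (iii) \emph{Pooled mode.} The largest-deficit rule keeps every \(D_c^n\) bounded by \(o(n)\) uniformly on \([0,T]\). This is a standard maximum-deficit argument: whenever some deficit is the largest, it is served preferentially, and target sums equal pool capacity once the idle coordinate is included. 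The argument requires the coordinate to be nonempty, which holds near \(\mathcal X^*\) because queues sit at their positive or zero stationary levels and planned flows are supplied by upstream throughput. So the fluid time-averaged occupancies equal \(b_c^*\), and the fluid state stays in the neighborhood of \(\mathcal X^*\). Letting \(\delta\to0\) gives convergence of every fluid limit to \(\mathcal X^*\). The approval rate then converges to \(\sum_i r_i\mu_{h,i}[m_iz^*_{i,d}+q_{\mathrm{acc},i}z^*_{i,s}]=R^*\), and the transient on \([0,\tau]\) contributes \(O(\tau/T)\), which vanishes as \(T\to\infty\).

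\textbf{Main obstacle.} I expect the hardest step to be (iii) together with the handoff from (i). The issue is to show that deficit tracking is not starved when a coordinate is temporarily empty, for example a post-review approval queue that is empty because \(\theta_{h_s}\) drained it, or a reviewer idle share competing with real work. I would first try a perturbation argument around \(\mathcal X^*\). Near \(\mathcal X^*\), each coordinate's inflow rate matches its planned service rate within \(O(\delta)\), so a coordinate with a large deficit accumulates work faster than \(o(n)\). Deficits of empty coordinates therefore stay within \(O(n\delta)\), and this error vanishes after \(\delta\to0\). If that fails, I would instead bound the total absolute deficit by a piecewise-linear Lyapunov function and invoke the fluid-limit machinery of Theorem~\ref{thm:fluid-limit} with the policy's additional fluid equations.
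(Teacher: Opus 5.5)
\textbf{The gap is in step (i), which you treat as routine.} You say shortfall coordinates fill because ``the generation queue is fed by overload or by the capped arrival rate,'' and that absolute deviations weighted by service rates form a Lyapunov function. Neither holds for a class served up to its arrival cap, which the theorem covers since no overload is assumed. There \(q_{g,i}^*=0\) and the target balance is \(\lambda_i+\mu_{r,i}p_{\mathrm{rej},i}y_i^*+\mu_{h,i}\alpha_iz_{i,d}^*+\mu_{h,i}(1-q_{\mathrm{acc},i})z_{i,s}^*=\mu_{g,i}x_i^*\). So arrivals alone fall short of the generation target, and the rest of the generation input is rework, which is itself short whenever a downstream coordinate is short. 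Below-target coordinates have empty queues, so catch-up admissions cannot help; the deficits simply feed one another around the loop.

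Weights built from service rates ignore the routing and return probabilities that couple the deficits. Take the case where only generation is short, with downstream at target and queues empty. With weights \(\mu_c\), your function has drift \(\mu_{g,i}d_x\bigl(\mu_{r,i}\phi_i^*+\mu_{h,i}(1-\phi_i^*)-\mu_{g,i}\bigr)\), which is positive for rates \((20,30,8)\) at \(\phi_i^*=1\). With weights \(1/\mu_c\) the drift is \(d_x\bigl(\mu_{g,i}\phi_i^*/\mu_{r,i}+\mu_{g,i}(1-\phi_i^*)/\mu_{h,i}-1\bigr)\), positive at \(\phi_i^*=0\).

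The missing idea is to weight by eventual return probabilities. First remove overfill, which vanishes after a finite transient because reservations block admissions above target. Then take
\[
L_i=d_x+\rho_i d_y+\alpha_i d_d+(1-q_{\mathrm{acc},i})d_j,\qquad \rho_i=p_{\mathrm{rej},i}+p_{\mathrm{pass},i}(1-q_{\mathrm{acc},i}),
\]
on the positive deficits. The downstream terms cancel exactly, and \(D^+L_i\le-\mu_{g,i}(1-\alpha_i)(1-\beta_i^{(I)}\phi_i^*)d_x\). The loop contracts because every pass approves a fixed fraction, which is where \(\beta_i^{(I)}<1\), not overload, enters. This drift controls only \(d_x\). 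The paper therefore integrates the cascade inequalities for \(d_y,d_d,d_j\) to get finite integrals and concludes by Lipschitz continuity. Those integral bounds, uniform over fluid limits from the deterministic initial state, also drive the Ces\`aro step in the lower bound. A small perturbation of these weights would give the strict decay you wanted, since the \(d_x\) coefficient has slack, but you must start from these weights.

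\textbf{The handoff claims do not follow, but you do not need them.} You claim that pooled mode starts by fluid time \(\tau\) and that the \(2n\delta_n\) hysteresis is never triggered again with probability tending to one. The condition \(n\delta_n\to\infty\) does not dominate \(o(n)\) fluctuations, which are typically of order \(\sqrt n\). Moreover, under stabilization a below-target coordinate fed at its target rate closes its gap only exponentially, so the fluid switch time can be infinite. The paper's argument avoids both issues. Since \(\delta_n\to0\), a fluid trajectory can leave stabilization only at the target. Stabilization converges from any state, pooled mode started at the target with reset deficits stays there, and any hysteresis-triggered return reruns the same argument.

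Your pooled-mode nonemptiness idea is the paper's, but run it as a first-time argument from zero deficits rather than inside an assumed \(O(\delta)\) neighborhood. Until a positive fluid deficit \(d\) first appears, upstream has delivered its target input up to \(o(n)\), so the deficit is matched by a backlog \(\mu_c d\,n+o(n)\) and the coordinate is nonempty.

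Your upper bound, via fluid limits, time-averaging and LP sensitivity, is a valid alternative to the paper's route. The paper averages expected maintainer effort in the prelimit and rebuilds a feasible LP point from inequalities, so it needs no perturbation bound. Your route needs the fluid queues bounded uniformly in \(T\), not merely on compacts. That holds because all four obsolescence rates are positive.
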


The inner limit applies the fluid approximation on a fixed horizon; the
outer limit lets the cold-start transient vanish.  This fluid-first statement
shows that the planning bound is attainable; optimality for
\eqref{eq:pf_obj} at each fixed \(n\) would need the reverse order and
remains open.  The proof, sketched in
Appendix~\ref{app:sketch_asymptotic_opt} and given in full in the Online
Supplement, shows that no admissible policy can exceed \(R^*\) under this
limit order, that the reservation rule drives every service coordinate to
its target from any bounded fluid state, and that the quotas and the
largest-deficit rule then hold the time-averaged service vector there.  No
overload condition is needed: a category served up to its arrival cap has
no planned backlog, but at the target its generation coordinate is supplied
at exactly its service rate, like every review and approval coordinate, so
a shortfall appears as a backlog of the same size and is cleared.  A
breakpoint tie does not invalidate the guarantee, because the policy tracks
one selected optimum.  The theorem covers a fixed staffing level; when
capacity moves, the policy re-enters stabilization and the theorem applies
to the new target.

\section{Numerical Experiments}
\label{sec:numerical}

The analysis identifies three quantities an operator should measure.  The
first is the value of adapting review to the bottleneck, the second is the
cost of ranking categories by a single index, and the third is the
sensitivity of both decisions to estimated reviewer errors.  We design one
experiment for each and a fourth that measures how close Quota Tracking
comes to the fluid optimum in systems of realistic size.  Public data
calibrate code-change quality and reviewer errors; arrivals, service rates,
rewards, and installed capacities are managerial scenarios rather than
empirical estimates.

\subsection{Calibration}
\label{subsec:calibration}

SWE-Review-Bench reports review outcomes for 1,384 candidate pull
requests generated by three generator models on SWE-bench Verified issues,
with 500, 462, and 422 instances per tier
\citep{wang2026swereview,jimenez2024swebench}.  Using the Claude Opus 4.6
error counts in its Table~8 and the per-tier instance counts of the
benchmark release (the Online Supplement records the counts and an
alternative base), we map the unresolved share to
\(\alpha_i\), false rejection conditional on resolved candidates to
\(\beta_i^{(I)}\), and false acceptance conditional on unresolved candidates
to \(\beta_i^{(II)}\).  Figure~\ref{fig:empirical_calibration} gives point
estimates and 95\% Wilson intervals.  The estimates
\((\alpha_i,\beta_i^{(I)},\beta_i^{(II)})\) are \((0.278,0.058,0.727)\),
\((0.491,0.230,0.189)\), and \((0.725,0.259,0.075)\) for the high-,
medium-, and low-quality tiers.  We use the tiers as three error profiles,
not as observed operational task categories.  The calibration targets
functional issue resolution, the outcome SWE-bench measures; maintainability,
security, and style criteria are outside these aggregates.

\begin{figure}[htbp]
\centering
\includegraphics[width=0.96\textwidth]{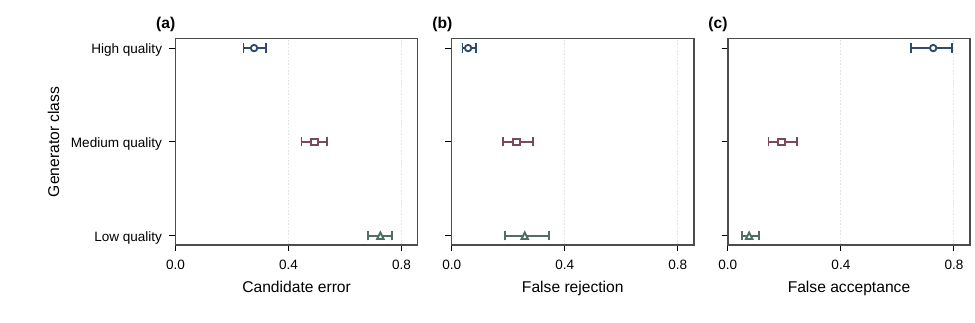}
\caption{Estimated code-change error and AI-reviewer error probabilities by
generator tier.  Intervals are 95\% Wilson intervals computed from published
aggregate counts.}
\label{fig:empirical_calibration}
\end{figure}

\subsection{Value of Adaptive Review}
\label{subsec:exp_calibrated_how_much}

\paragraph{Question and method.}
Does adapting the review fraction to both capacities beat a fixed
``review everything'' or ``review nothing'' rule, and which reviewer error
should an engineering team reduce first?  We use the medium-quality
profile, \((\mu_g,\mu_r,\mu_h)=(20,30,8)\) completions per server per day,
\(n_g=10\), and \(\lambda=150\) arrivals per day.  Over a grid of AI-review and maintainer capacities we
compare the optimal plan with the better of the two fixed rules.  We then set
the Type~I or the Type~II error to zero, one at a time, and record the
resulting throughput gain.

\begin{figure}[htbp]
\centering
\includegraphics[width=\textwidth]{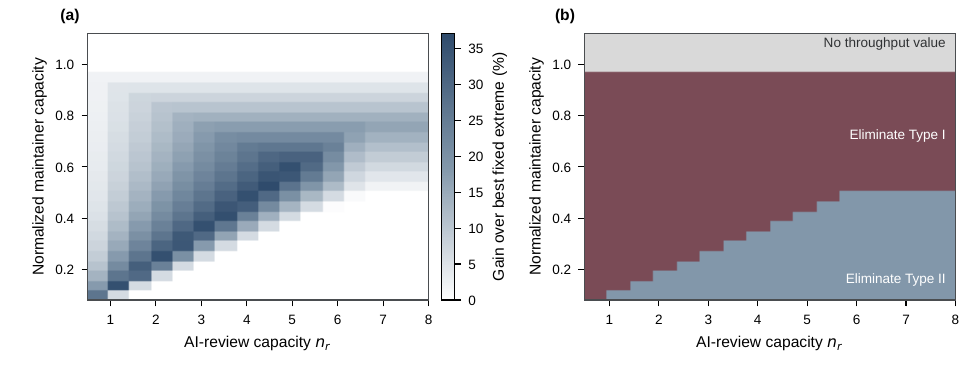}
\caption{Value of adaptive review.  Panel (a) reports percentage throughput
gain over the better of Always-Review and Never-Review.  Panel (b)
identifies which perfect-error counterfactual, eliminating false rejection
(Type~I) or false acceptance (Type~II), has greater value at each capacity
pair.}
\label{fig:adaptive_value}
\end{figure}

\paragraph{Finding and interpretation.}
Adaptive routing strictly improves on both fixed rules on 62.7\% of the
capacity grid, with a maximum gain of 37.0\% at an interior configuration
where every resource is scarce.  The error that matters most changes with
the bottleneck.  When maintainers are scarce and review capacity is ample,
eliminating false acceptance is more valuable, because it keeps defective
work away from the human gate.  When review capacity itself is scarce,
eliminating false rejection is more valuable, because each wasted review
slot is costly.  From \(n_h=13\) through \(24\), eliminating false
rejection is more valuable at every review-capacity level, since generation
has become the scarce input; from \(n_h=25\) review is bypassed and neither
improvement changes throughput.  The value of reviewer engineering therefore
cannot be read from an aggregate accuracy score: the same model should be
tuned for precision when maintainer attention binds and for recall when
coding-agent capacity binds.  Proposition~\ref{prop:four_phase} predicts
where the switch occurs, and the experiment measures how large that
switch is.

\subsection{Value of Multiclass Allocation}
\label{subsec:exp_calibrated_which}

\paragraph{Question and method.}
How costly is a single-index category ranking?  The three error profiles
are combined with the managerial scenario
\[
\mu_g=(16,20,24),\quad \mu_r=(24,30,34),\quad
\mu_h=(7,8,9),\quad r=(1,1,1),
\]
\(n_g=10\), \(n_r=3\), and nonbinding arrival caps, so that AI review can
cover about half of generation output.  Rewards are equal, so the profiles
differ only in quality and in service requirements.  Two single-index rules are natural.
The \(q_{\mathrm{acc}}\)-only rule reviews only the profile with the
highest post-review quality.  The \(\eta\)-only rule reviews only the
profile with the smallest false-rejection burden \(\eta_i\); by
Proposition~\ref{prop:reviewed_switch}, each rule is optimal at one end of
the staffing range when profiles differ only in their reviewers.  We
compare the optimal plan with these two rules and
with never and always reviewing.  Three counterfactuals equalize quality,
equalize service requirements, or bind one arrival cap.

\begin{figure}[htbp]
\centering
\includegraphics[width=\textwidth]{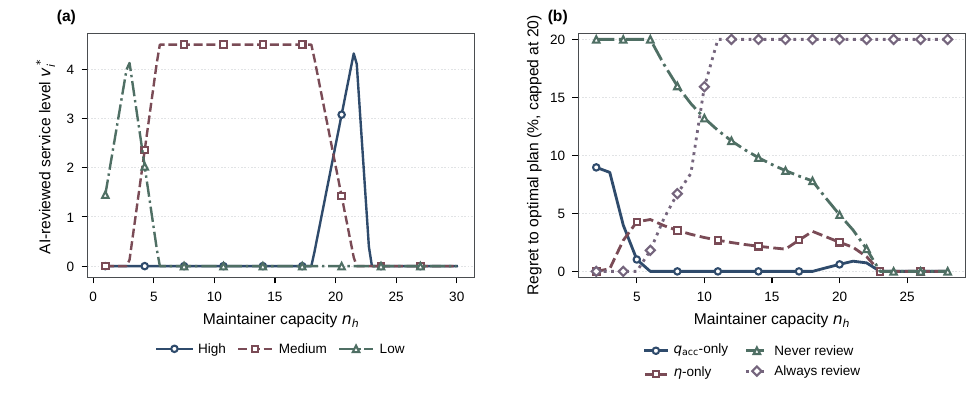}
\caption{Multiclass review decisions in the baseline scenario.  Panel (a)
shows the AI-reviewed service level of each profile as maintainer staffing
changes.  Panel (b) reports the regret of restricted routing rules relative
to the optimal plan.  The Never-Review and Always-Review regrets reach
28.8\% and 57.6\% and are capped at 20\% in the panel.}
\label{fig:multiclass_decisions}
\end{figure}

\paragraph{Finding and interpretation.}
The reviewed profile moves up the quality ladder as maintainers are added.
With very few maintainers only the low-quality profile is generated and
reviewed, because heavy filtering yields the most approvals per maintainer
hour.  The medium-quality profile joins from \(n_h=3\) and is reviewed alone
from \(n_h\approx6\); the high-quality profile is added with a split route
from \(n_h\approx18\) and reviewed alone from \(n_h\approx22\); from
\(n_h=23\) review is bypassed.  At most two profiles are reviewed and one is
split, as Proposition~\ref{thm:multiclass_path} allows.  Neither single index
anticipates this sequence.  The \(q_{\mathrm{acc}}\)-only rule always picks the
medium-quality profile and loses 9.0\% when maintainers are scarce; the
\(\eta\)-only rule always picks the low-quality profile and loses up to
4.5\% once maintainers are moderately staffed.  Never-Review loses up to
28.8\% when maintainers are scarce.  Always-Review loses up to 57.6\% when
they are not, because with three review servers it caps generation at what
the reviewer can inspect.  No fixed rule is good at every staffing level,
and the plan tells the operator when to switch.

The counterfactuals locate the source of the switches.  With equalized
quality, service requirements alone select the fastest profile for review
at every staffing level.  With equalized service requirements, the reviewed
profile still changes from medium to high as staffing grows, so quality
differences alone reorder priority.  When an arrival cap binds, all three
profiles are reviewed at once for \(17\le n_h\le20\), because the capped
profile cannot absorb the flow that would otherwise replace the others.  The operator should therefore update
the small set of marginal profiles at each breakpoint, using the generation
and maintainer prices, rather than rank categories once by a quality score.

\subsection{Calibration Uncertainty}
\label{subsec:exp_uncertainty}

\paragraph{Question and method.}
The published evidence is a set of aggregate confusion counts.  We place a
Jeffreys Dirichlet prior on each tier's four outcome cells, propagate 10,000
posterior draws through the plan, and report the posterior probability that
each profile receives review, bands for the single-class thresholds, and the
regret from applying the plug-in routing fractions when a posterior draw is
the true model.

\begin{figure}[htbp]
\centering
\includegraphics[width=\textwidth]{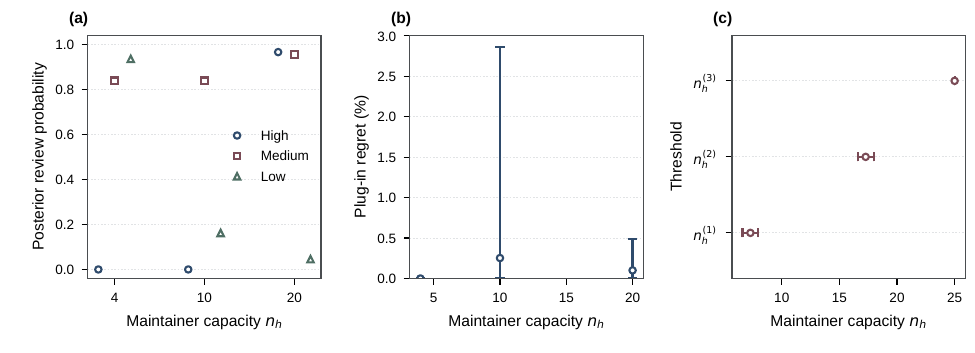}
\caption{Decision uncertainty from aggregate review counts.  Panel (a)
reports posterior probabilities that each profile receives AI review at
three maintainer capacities.  Panel (b) reports the distribution of plug-in
policy regret.  Panel (c) reports posterior bands for the single-class
staffing thresholds.}
\label{fig:calibration_uncertainty}
\end{figure}

\paragraph{Finding and interpretation.}
Uncertainty concentrates where the reviewed profiles change.  At
\(n_h=4\) the low- and medium-quality profiles are reviewed in 93.5\% and
83.8\% of posterior draws and the high-quality profile never is.  At
\(n_h=10\), where the plug-in plan reviews only the medium-quality profile,
the low-quality profile still enters in 16.2\% of draws.  At \(n_h=20\) the
high- and medium-quality profiles are reviewed in 96.5\% and 95.4\% of
draws.  The 95\% bands for the first two single-class thresholds are
\([6.59,7.97]\) and \([16.59,17.97]\); the bypass threshold is fixed at 25
by service capacity.  Plug-in routing has zero regret at \(n_h=4\), zero
median regret at \(n_h=10\) with a 97.5th-percentile loss of 2.9\%, and a
97.5th-percentile loss of 0.5\% at \(n_h=20\).  Holding service parameters and the error model fixed, point
estimates are adequate away from a breakpoint and less reliable near one, so
a deployment should pair each planned breakpoint with a posterior review
probability or a regret band.

\subsection{Finite-System Performance}
\label{subsec:exp_finite}

\paragraph{Question and method.}
How much of the fluid optimum does Quota Tracking deliver in a system of
realistic size, and how does its gap compare with the rules an operator
might use instead?  We simulate the stochastic network of
Section~\ref{subsec:exp_calibrated_which} at scales
\(n\in\{1,2,4,8,16\}\), from 10 coding-agent sessions, 3 review sessions,
and 8 to 20 maintainers up to 16 times as many of each.  The three
maintainer capacities are \(n_h\in\{8,12,20\}\), where the plan reviews
the medium-quality profile alone, alone, and together with a split
high-quality profile; every heuristic has positive fluid regret.  For each policy we
report the relative gap between its long-run reward rate and \(nV(n_h)\),
the fluid optimum at that scale.  The benchmarks are Always-Review,
Never-Review, and the \(\eta\)-only single-index rule, each with its own
best fluid plan.  Twenty
replications of length 150 with a warm-up of 30 use common random numbers;
error bars are 95\% confidence intervals.

\begin{figure}[htbp]
\centering
\includegraphics[width=\textwidth]{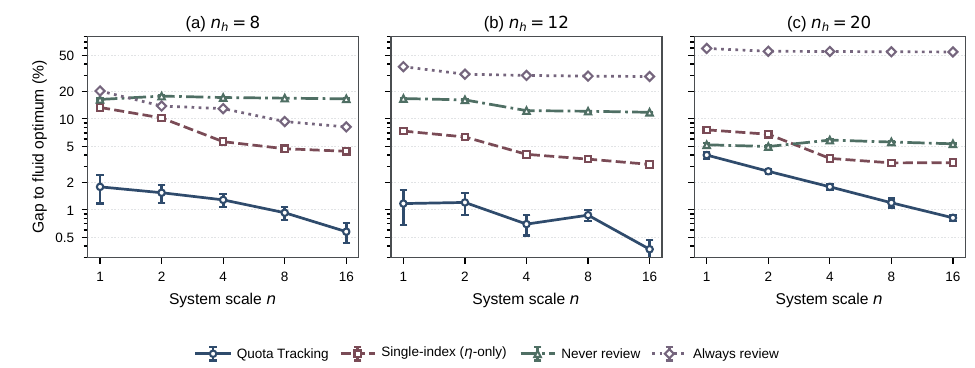}
\caption{Relative gap to the fluid optimum as a function of system scale
(log--log axes).  Each panel fixes maintainer capacity; each curve is a
policy.  Error bars are 95\% confidence intervals from twenty replications
that share common random numbers.}
\label{fig:finite_system_gap}
\end{figure}

\paragraph{Finding and interpretation.}
Figure~\ref{fig:finite_system_gap} shows two kinds of curves.  The gap of
Quota Tracking falls with scale, from about 1.8\%, 1.2\%, and 4.0\% at
\(n=1\) to about 0.6\%, 0.4\%, and 0.8\% at \(n=16\) for
\(n_h=8\), \(12\), and \(20\).  The gaps of the three heuristics do not fall.  Each
settles at the fluid regret of its own plan, about 5\% to 17\% for
Never-Review, 3\% to 4\% for the single-index rule, and 8\% to 54\% for
Always-Review, because the loss is built into the plan they follow rather
than into its execution.  The fixed rules ignore the interior review
fractions of Proposition~\ref{prop:four_phase} and the single-index rule
ignores the breakpoint switches of Proposition~\ref{thm:multiclass_path}, and
scale repairs neither.  The remaining gap of Quota Tracking at \(n=1\) is
the price of variability with about a dozen servers per pool, where
fractional shares must be time-shared and a loaded pool is sometimes idle
for lack of waiting work.  The planning bound of
Theorem~\ref{thm:asymptotic_optimality} is therefore not only a limit
statement: at the scale of a large engineering organization, Quota Tracking
leaves well under one percent of approved throughput on the table, whereas
the natural heuristics leave several percent regardless of size.

\section{Conclusion}
\label{sec:conclusion}

AI review before human approval is a capacity-allocation problem as much
as an accuracy problem.  Review filters defective changes before they consume
maintainer attention, but it consumes inference and, through false
rejection, coding-agent capacity; which effect dominates depends on the
bottleneck.  For a single category, the optimal review fraction passes
through four phases as maintainer capacity grows, and in one of them an
informative reviewer with idle capacity is deliberately throttled because
its rework displaces new work.  Across heterogeneous categories, the optimal
allocation changes only at finitely many staffing breakpoints, at most three
categories are marginal between them, and the reviewed category is chosen
by resource prices rather than by any single accuracy statistic.

These structural results yield an implementable policy.  Quota Tracking
stores the staffing map offline, routes by deterministic cumulative quotas,
and shares servers through pooled effort deficits.  It attains the planning
bound in the fluid limit, and in calibrated simulations its gap to that
bound shrinks with system size while fixed and single-index rules do not.
Operators should estimate both reviewer error types and the service
requirements of each category, compute the staffing breakpoints, and change
review intensity and category priority at those points.  Natural next steps
are extensions to reviewer accuracy that improves with feedback, to multiple
review stages, to staffing that responds to queue length, and to the joint
choice of generator model and review intensity under one inference budget.

\ifdefined\MainIncludesAppendix\else
\bibliography{references}
\fi

\bibliography{references}

@article{atar2010cmu,
  title={The $c\mu/\theta$ rule for many-server queues with abandonment},
  author={Atar, Rami and Giat, Chanit and Shimkin, Nahum},
  journal={Operations Research},
  volume={58},
  number={5},
  pages={1427--1439},
  year={2010},
  OPTdoi={10.1287/opre.1100.0826}
}

@article{ata2006dynamic,
  title={Dynamic Control of a Multiclass Queue with Thin Arrival Streams},
  author={Ata, Bar{\i}{\c{s}}},
  journal={Operations Research},
  volume={54},
  number={5},
  pages={876--892},
  year={2006},
  OPTdoi={10.1287/opre.1060.0308}
}

@article{dai1995positive,
  title={On positive {H}arris recurrence of multiclass queueing networks: a unified approach via fluid limit models},
  author={Dai, J. G.},
  journal={The Annals of Applied Probability},
  volume={5},
  number={1},
  pages={49--77},
  year={1995},
  doi={10.1214/aoap/1177004828}
}

@article{fugener2022cognitive,
  title={Cognitive Challenges in Human--Artificial Intelligence Collaboration: Investigating the Path Toward Productive Delegation},
  author={F{\"u}gener, Andreas and Grahl, J{\"o}rn and Gupta, Alok and Ketter, Wolfgang},
  journal={Information Systems Research},
  volume={33},
  number={2},
  pages={678--696},
  year={2022},
  OPTdoi={10.1287/isre.2021.1079}
}

@article{hu2022dynamic,
  title={Dynamic Type Matching},
  author={Hu, Ming and Zhou, Yun},
  journal={Manufacturing \& Service Operations Management},
  volume={24},
  number={1},
  pages={125--142},
  year={2022},
  OPTdoi={10.1287/msom.2020.0952}
}

@article{gurvich2015dynamic,
  title={On the dynamic control of matching queues},
  author={Gurvich, Itai and Ward, Amy},
  journal={Stochastic Systems},
  volume={4},
  number={2},
  pages={479--523},
  year={2014},
  OPTdoi={10.1287/13-ssy097}
}

@article{long2024generalized,
  title={The Generalized $c/\mu$ Rule for Queues with Heterogeneous Server Pools},
  author={Long, Zhenghua and Zhang, Hailun and Zhang, Jiheng and Zhang, Zhe George},
  journal={Operations Research},
  volume={72},
  number={6},
  pages={2488--2506},
  year={2024},
  OPTdoi={10.1287/opre.2023.2472}
}

@article{long2020dynamic,
  title={Dynamic Scheduling of Multiclass Many-server Queues with Abandonment: the Generalized $c\mu/h$ Rule},
  author={Long, Zhenghua and Shimkin, Nahum and Zhang, Hailun and Zhang, Jiheng},
  journal={Operations Research},
  volume={68},
  number={4},
  pages={1218--1230},
  year={2020},
  OPTdoi={10.1287/opre.2019.1908}
}

@article{whitt2006fluid,
  title={Fluid models for multiserver queues with abandonments},
  author={Whitt, Ward},
  journal={Operations Research},
  volume={54},
  number={1},
  pages={37--54},
  year={2006},
  OPTdoi={10.1287/opre.1050.0227}
}

@article{zhang2013fluid,
  title={Fluid Models of Many-server Queues with Abandonment},
  author={Zhang, Jiheng},
  journal={Queueing Systems},
  volume={73},
  number={2},
  pages={147--193},
  year={2013},
  OPTdoi={10.1007/s11134-012-9307-9}
}

@article{zhong2025optimal,
  title={Optimal Integration: Human, Machine, and Generative {AI}},
  author={Zhong, Hongda},
  journal={Management Science},
  volume={72},
  number={5},
  pages={3720--3739},
  year={2026},
  doi={10.1287/mnsc.2024.07401}
}

@inproceedings{zheng2023judging,
  title={Judging {LLM}-as-a-Judge with {MT-Bench} and {Chatbot Arena}},
  author={Zheng, Lianmin and Chiang, Wei-Lin and Sheng, Ying and Zhuang, Siyuan and Wu, Zhanghao and Zhuang, Yonghao and Lin, Zi and Li, Zhuohan and Li, Dacheng and Xing, Eric P. and Zhang, Hao and Gonzalez, Joseph E. and Stoica, Ion},
  booktitle={Advances in Neural Information Processing Systems (NeurIPS)},
  year={2023},
  note={Datasets and Benchmarks Track}
}

@inproceedings{zhu2023judgelm,
  title={{JudgeLM}: Fine-tuned Large Language Models are Scalable Judges},
  author={Zhu, Lianghui and Wang, Xinggang and Wang, Xinlong},
  booktitle={International Conference on Learning Representations (ICLR)},
  year={2025},
  note={Spotlight}
}

@inproceedings{tan2024judgebench,
  title={{JudgeBench}: A Benchmark for Evaluating {LLM}-Based Judges},
  author={Tan, Sijun and Zhuang, Siyuan and Montgomery, Kyle and Tang, William Y. and Cuadron, Alejandro and Wang, Chenguang and Popa, Raluca Ada and Stoica, Ion},
  booktitle={International Conference on Learning Representations (ICLR)},
  year={2025}
}

@inproceedings{bavaresco2025llms,
  title={{LLMs} Instead of Human Judges? A Large Scale Empirical Study Across 20 {NLP} Evaluation Tasks},
  author={Bavaresco, Anna and Bernardi, Raffaella and Bertolazzi, Leonardo and Elliott, Desmond and Fern{\'a}ndez, Raquel and Gatt, Albert and Ghaleb, Esam and Giulianelli, Mario and Hanna, Michael and Koller, Alexander and Martins, Andre and Mondorf, Philipp and Neplenbroek, Vera and Pezzelle, Sandro and Plank, Barbara and Schlangen, David and Suglia, Alessandro and Surikuchi, Aditya K. and Takmaz, Ece and Testoni, Alberto},
  booktitle={Proceedings of the 63rd Annual Meeting of the Association for Computational Linguistics (Volume 2: Short Papers)},
  pages={238--255},
  year={2025},
  doi={10.18653/v1/2025.acl-short.20},
  url={https://aclanthology.org/2025.acl-short.20/}
}

@book{billingsley2013convergence,
  title={Convergence of Probability Measures},
  author={Billingsley, Patrick},
  publisher={Wiley},
  year={1999},
  OPTdoi={10.1002/9780470316962}
}

@article{gu2024survey,
  title={A Survey on {LLM}-as-a-Judge},
  author={Gu, Jiawei and Jiang, Xuhui and Shi, Zhichao and Tan, Hexiang and Zhai, Xuehao and Xu, Chengjin and Li, Wei and Shen, Yinghan and Ma, Shengjie and Liu, Honghao and Wang, Saizhuo and Zhang, Kun and Wang, Yuanzhuo and Gao, Wen and Ni, Lionel and Guo, Jian},
  journal={The Innovation},
  volume={7},
  number={6},
  pages={101253},
  year={2026},
  doi={10.1016/j.xinn.2025.101253}
}

@misc{lykouris2024learning,
  title={Learning to Defer in Congested Systems: The {AI}-Human Interplay},
  author={Lykouris, Thodoris and Weng, Wentao},
  year={2024},
  eprint={2402.12237},
  archivePrefix={arXiv},
  primaryClass={cs.LG},
  url={https://arxiv.org/abs/2402.12237}
}

@misc{lee2024design,
  title={Design and Scheduling of an AI-based Queueing System},
  author={Lee, Jiung and Namkoong, Hongseok and Zeng, Yibo},
  year={2024},
  eprint={2406.06855},
  archivePrefix={arXiv},
  primaryClass={math.OC},
  url={https://arxiv.org/abs/2406.06855}
}

@misc{lin2026inference,
  title={Large-Scale {LLM} Inference with Heterogeneous Workloads: Prefill-Decode Contention and Asymptotically Optimal Control},
  author={Lin, Ruihan and Ding, Zezhen and Han, Zean and Zhang, Jiheng},
  year={2026},
  eprint={2602.02987},
  archivePrefix={arXiv},
  primaryClass={cs.DC},
  url={https://arxiv.org/abs/2602.02987}
}

@article{lee1997design,
  title={Design of a production system with a feedback buffer},
  author={Lee, Ho Woo and Seo, Dong Won},
  journal={Queueing Systems},
  volume={26},
  number={1--2},
  pages={187--202},
  year={1997},
  OPTdoi={10.1023/A:1019129107476}
}

@article{adve1994relationship,
  title={The Relationship Between {Bernoulli} and Fixed Feedback Policies for the {M/G/1} Queue},
  author={Adve, Vikram S. and Nelson, Randolph},
  journal={Operations Research},
  volume={42},
  number={2},
  pages={380--385},
  year={1994},
  OPTdoi={10.1287/opre.42.2.380}
}

@article{so1995optimal,
  title={Optimal Operating Policy for a Bottleneck with Random Rework},
  author={So, Kut C. and Tang, Christopher S.},
  journal={Management Science},
  volume={41},
  number={4},
  pages={620--636},
  year={1995},
  OPTdoi={10.1287/mnsc.41.4.620}
}

@article{chen1988empirical,
  title={Empirical Evaluation of a Queueing Network Model for Semiconductor Wafer Fabrication},
  author={Chen, Hong and Harrison, J. Michael and Mandelbaum, Avi and Van Ackere, Ann and Wein, Lawrence M.},
  journal={Operations Research},
  volume={36},
  number={2},
  pages={202--215},
  year={1988},
  OPTdoi={10.1287/opre.36.2.202}
}

@article{yom2014erlang,
  title={{Erlang-R}: A Time-Varying Queue with Reentrant Customers, in Support of Healthcare Staffing},
  author={Yom-Tov, Galit B. and Mandelbaum, Avishai},
  journal={Manufacturing \& Service Operations Management},
  volume={16},
  number={2},
  pages={283--299},
  year={2014},
  OPTdoi={10.1287/msom.2013.0474}
}

@article{chan2014use,
  title={When to Use Speedup: An Examination of Service Systems with Returns},
  author={Chan, Carri W. and Yom-Tov, Galit and Escobar, Gabriel},
  journal={Operations Research},
  volume={62},
  number={2},
  pages={462--482},
  year={2014},
  OPTdoi={10.1287/opre.2014.1258}
}

@inproceedings{jimenez2024swebench,
  title={{SWE}-bench: Can Language Models Resolve Real-World {GitHub} Issues?},
  author={Jimenez, Carlos E. and Yang, John and Wettig, Alexander and Yao, Shunyu and Pei, Kexin and Press, Ofir and Narasimhan, Karthik},
  booktitle={International Conference on Learning Representations},
  year={2024}
}

@misc{bjarnason2026randomness,
  title={On Randomness in Agentic Evals},
  author={Bjarnason, Bjarni Haukur and Silva, Andr{\'e} and Monperrus, Martin},
  year={2026},
  eprint={2602.07150},
  archivePrefix={arXiv},
  primaryClass={cs.SE},
  url={https://arxiv.org/abs/2602.07150},
  note={Data available at \url{https://doi.org/10.5281/zenodo.18684664}}
}

@misc{wang2026swereview,
  title={{SWE}-Review: Closing the Loop on Issue Resolution with Agentic Code Review},
  author={Wang, Ruoyu and Chen, Jierun and Wang, Shaowei and Tao, Chaofan and Yang, Sidi and Jiang, Yuxin and Yap, Kim-Hui and Shang, Lifeng and Li, Xiaohui and Bai, Haoli},
  year={2026},
  eprint={2607.06065},
  archivePrefix={arXiv},
  primaryClass={cs.SE},
  url={https://arxiv.org/abs/2607.06065}
}

@inproceedings{yang2024sweagent,
  title={{SWE}-agent: Agent-Computer Interfaces Enable Automated Software Engineering},
  author={Yang, John and Jimenez, Carlos E. and Wettig, Alexander and Lieret, Kilian and Yao, Shunyu and Narasimhan, Karthik and Press, Ofir},
  booktitle={Advances in Neural Information Processing Systems},
  volume={37},
  year={2024},
  url={https://proceedings.neurips.cc/paper_files/paper/2024/hash/5a7c947568c1b1328ccc5230172e1e7c-Abstract.html}
}

@inproceedings{zhang2024autocoderover,
  title={{AutoCodeRover}: Autonomous Program Improvement},
  author={Zhang, Yuntong and Ruan, Haifeng and Fan, Zhiyu and Roychoudhury, Abhik},
  booktitle={Proceedings of the 33rd ACM SIGSOFT International Symposium on Software Testing and Analysis},
  pages={1592--1604},
  year={2024},
  doi={10.1145/3650212.3680384},
  url={https://doi.org/10.1145/3650212.3680384}
}

@article{xia2025agentless,
  title={{Agentless}: Demystifying {LLM}-Based Software Engineering Agents},
  author={Xia, Chunqiu Steven and Deng, Yinlin and Dunn, Soren and Zhang, Lingming},
  journal={Proceedings of the ACM on Software Engineering},
  year={2025},
  doi={10.1145/3715754},
  url={https://doi.org/10.1145/3715754}
}

@inproceedings{bacchelli2013expectations,
  title={Expectations, Outcomes, and Challenges of Modern Code Review},
  author={Bacchelli, Alberto and Bird, Christian},
  booktitle={2013 35th International Conference on Software Engineering},
  pages={712--721},
  year={2013},
  doi={10.1109/ICSE.2013.6606617},
  url={https://doi.org/10.1109/ICSE.2013.6606617}
}

@inproceedings{tufano2021automating,
  title={Towards Automating Code Review Activities},
  author={Tufano, Rosalia and Pascarella, Luca and Tufano, Michele and Poshyvanyk, Denys and Bavota, Gabriele},
  booktitle={2021 IEEE/ACM 43rd International Conference on Software Engineering},
  pages={163--174},
  year={2021},
  doi={10.1109/ICSE43902.2021.00027},
  url={https://doi.org/10.1109/ICSE43902.2021.00027}
}

@inproceedings{li2022automating,
  title={Automating Code Review Activities by Large-Scale Pre-Training},
  author={Li, Zhiyu and Lu, Shuai and Guo, Daya and Duan, Nan and Jannu, Shailesh and Jenks, Grant and Majumder, Deep and Green, Jared and Svyatkovskiy, Alexey and Fu, Shengyu and Sundaresan, Neel},
  booktitle={Proceedings of the 30th ACM Joint European Software Engineering Conference and Symposium on the Foundations of Software Engineering},
  pages={1035--1047},
  year={2022},
  doi={10.1145/3540250.3549081},
  url={https://doi.org/10.1145/3540250.3549081}
}

@article{cui2026effects,
  title={The Effects of Generative {AI} on High-Skilled Work: Evidence from Three Field Experiments with Software Developers},
  author={Cui, Kevin Zheyuan and Demirer, Mert and Jaffe, Sonia and Musolff, Leon and Peng, Sida and Salz, Tobias},
  journal={Management Science},
  year={2026},
  doi={10.1287/mnsc.2025.00535},
  url={https://doi.org/10.1287/mnsc.2025.00535}
}

@inproceedings{liu2023geval,
  title={{G}-Eval: {NLG} Evaluation Using {GPT}-4 with Better Human Alignment},
  author={Liu, Yang and Iter, Dan and Xu, Yichong and Wang, Shuohang and Xu, Ruochen and Zhu, Chenguang},
  booktitle={Proceedings of the 2023 Conference on Empirical Methods in Natural Language Processing},
  pages={2511--2522},
  year={2023},
  doi={10.18653/v1/2023.emnlp-main.153},
  url={https://aclanthology.org/2023.emnlp-main.153/}
}

@inproceedings{kim2024prometheus,
  title={Prometheus 2: An Open Source Language Model Specialized in Evaluating Other Language Models},
  author={Kim, Seungone and Suk, Juyoung and Longpre, Shayne and Lin, Bill Yuchen and Shin, Jamin and Welleck, Sean and Neubig, Graham and Lee, Moontae and Lee, Kyungjae and Seo, Minjoon},
  booktitle={Proceedings of the 2024 Conference on Empirical Methods in Natural Language Processing},
  pages={4334--4353},
  year={2024},
  doi={10.18653/v1/2024.emnlp-main.248},
  url={https://aclanthology.org/2024.emnlp-main.248/}
}

@inproceedings{wang2024notfair,
  title={Large Language Models Are Not Fair Evaluators},
  author={Wang, Peiyi and Li, Lei and Chen, Liang and Cai, Zefan and Zhu, Dawei and Lin, Binghuai and Cao, Yunbo and Kong, Lingpeng and Liu, Qi and Liu, Tianyu and Sui, Zhifang},
  booktitle={Proceedings of the 62nd Annual Meeting of the Association for Computational Linguistics},
  pages={9440--9450},
  year={2024},
  doi={10.18653/v1/2024.acl-long.511},
  url={https://aclanthology.org/2024.acl-long.511/}
}

@inproceedings{koo2024cognitive,
  title={Benchmarking Cognitive Biases in Large Language Models as Evaluators},
  author={Koo, Ryan and Lee, Minhwa and Raheja, Vipul and Park, Jong Inn and Kim, Zae Myung and Kang, Dongyeop},
  booktitle={Findings of the Association for Computational Linguistics: ACL 2024},
  pages={517--545},
  year={2024},
  doi={10.18653/v1/2024.findings-acl.29},
  url={https://aclanthology.org/2024.findings-acl.29/}
}

@inproceedings{madras2018predict,
  title={Predict Responsibly: Improving Fairness and Accuracy by Learning to Defer},
  author={Madras, David and Pitassi, Toniann and Zemel, Richard},
  booktitle={Advances in Neural Information Processing Systems},
  volume={31},
  pages={6147--6157},
  year={2018}
}

@inproceedings{mozannar2020consistent,
  title={Consistent Estimators for Learning to Defer to an Expert},
  author={Mozannar, Hussein and Sontag, David},
  booktitle={Proceedings of the 37th International Conference on Machine Learning},
  series={Proceedings of Machine Learning Research},
  volume={119},
  pages={7076--7087},
  year={2020},
  publisher={PMLR},
  url={https://proceedings.mlr.press/v119/mozannar20b.html}
}

@inproceedings{raghu2019uncertainty,
  title={Direct Uncertainty Prediction for Medical Second Opinions},
  author={Raghu, Maithra and Blumer, Katy and Sayres, Rory and Obermeyer, Ziad and Kleinberg, Bobby and Mullainathan, Sendhil and Kleinberg, Jon},
  booktitle={Proceedings of the 36th International Conference on Machine Learning},
  series={Proceedings of Machine Learning Research},
  volume={97},
  pages={5281--5290},
  year={2019},
  publisher={PMLR},
  url={https://proceedings.mlr.press/v97/raghu19a.html}
}

@inproceedings{donahue2022collaboration,
  title={Human-Algorithm Collaboration: Achieving Complementarity and Avoiding Unfairness},
  author={Donahue, Kate and Chouldechova, Alexandra and Kenthapadi, Krishnaram},
  booktitle={Proceedings of the 2022 ACM Conference on Fairness, Accountability, and Transparency},
  pages={1639--1656},
  year={2022},
  doi={10.1145/3531146.3533221},
  url={https://doi.org/10.1145/3531146.3533221}
}

@article{kumar1993reentrant,
  title={Re-Entrant Lines},
  author={Kumar, P. R.},
  journal={Queueing Systems},
  volume={13},
  number={1--3},
  pages={87--110},
  year={1993},
  doi={10.1007/BF01158930}
}

@article{narahari1996modeling,
  title={Modeling Reentrant Manufacturing Systems with Inspection Stations},
  author={Narahari, Y. and Khan, L. M.},
  journal={Journal of Manufacturing Systems},
  volume={15},
  number={6},
  pages={367--378},
  year={1996},
  doi={10.1016/S0278-6125(97)83051-8}
}

@article{chen1993dynamic,
  title={Dynamic Scheduling of a Multiclass Fluid Network},
  author={Chen, Hong and Yao, David D.},
  journal={Operations Research},
  volume={41},
  number={6},
  pages={1104--1115},
  year={1993},
  doi={10.1287/opre.41.6.1104}
}
\begin{APPENDICES}
\normalsize
\begingroup
\def\MainAppendix{}
\ifdefined\MainAppendix
\section{Model Details and Proof Sketches}
\label{app:analytical_proofs}

This appendix states the stochastic state equations and the fluid model,
and sketches the proofs of all results in
Sections~\ref{sec:fluid_model}--\ref{sec:stoch_control}.  Complete proofs
are in the Online Supplement (planning results in Section~EC.1,
Theorem~\ref{thm:fluid-limit} in Section~EC.2,
Theorem~\ref{thm:asymptotic_optimality} in Section~EC.3).
\else
\ECSwitch
\ECHead{Online Supplement}
\section{Complete Proofs of the Planning Results}
\label{ec:planning_proofs}

This section proves Lemma~\ref{prop:approved_flow_reduction} and
Propositions~\ref{prop:four_phase}, \ref{thm:multiclass_path},
and~\ref{prop:reviewed_switch}, sketched in the Appendix of the main paper;
equation numbers such as (\ref{eq:activity_lp}) refer to the main paper.
Throughout, we write \eqref{eq:activity_lp} in the useful-service
variables \(x_i,v_i\) of Section~\ref{subsec:single_class}:
\begin{subequations}\label{eq:mc_reduced}
\begin{align}
\max\quad&\sum_i r_i\mu_{g,i}(1-\alpha_i)
(x_i-\beta_i^{(I)}v_i)\label{eq:mc_reduced_obj}\\
\text{s.t.}\quad&0\le v_i\le x_i,\quad \sum_ix_i\le n_g,\label{eq:mc_reduced_box}\\
&\sum_i\frac{\mu_{g,i}}{\mu_{r,i}}v_i\le n_r,\qquad
\sum_i\frac{\mu_{g,i}}{\mu_{h,i}}
(x_i-p_{\mathrm{rej},i}v_i)\le n_h,\label{eq:mc_reduced_capacity}\\
&x_i-\beta_i^{(I)}v_i
\le\frac{\lambda_i}{\mu_{g,i}(1-\alpha_i)},\quad i\in\mathcal I.
\label{eq:mc_reduced_qw_nonneg}
\end{align}
\end{subequations}
Substituting \(f_i^D=\mu_{g,i}(1-\alpha_i)(x_i-v_i)\) and
\(f_i^S=\mu_{g,i}(1-\alpha_i)(1-\beta_i^{(I)})v_i\) into
\eqref{eq:activity_lp} gives \eqref{eq:mc_reduced}.
\fi

\ifdefined\MainAppendix
\subsection{Stochastic State Equations}
\label{app:state_equations}

Let \(\mathcal N=\{g,r,h_d,h_s\}\) denote the generation, AI-review, direct
human-approval, and post-review human-approval nodes.  For node \(\ell\) and
class \(i\), \(Q_{\ell,i}^n\), \(X_{\ell,i}^n\), and \(U_{\ell,i}^n\) are the
waiting count, in-service count, and cumulative service admissions;
\(S_{\ell\to m,i}^n\) counts completions at \(\ell\) routed to successor
\(m\); and \(B_{\ell,i}^n\) counts obsolescence.  With
\(A_{g,i}^n=A_i^n\), \(A_{\ell,i}^n=0\) for \(\ell\ne g\), and
predecessor and successor sets \(\mathcal I_\ell,\mathcal O_\ell\), the
balances are
\begin{align}
Q_{\ell,i}^n(t)
&=Q_{\ell,i}^n(0)+A_{\ell,i}^n(t)
+\sum_{k\in\mathcal I_\ell}S_{k\to\ell,i}^n(t)
-U_{\ell,i}^n(t)-B_{\ell,i}^n(t),\label{eq:pf_balance_compact}\\
X_{\ell,i}^n(t)
&=X_{\ell,i}^n(0)+U_{\ell,i}^n(t)
-\sum_{m\in\mathcal O_\ell}S_{\ell\to m,i}^n(t),\qquad \ell\in\mathcal N,
\nonumber
\end{align}
and the capacity constraints are
\begin{equation}
\sum_iX_{g,i}^n(t)\le\lfloor nn_g\rfloor,\quad
\sum_iX_{r,i}^n(t)\le\lfloor nn_r\rfloor,\quad
\sum_i\bigl(X_{h_d,i}^n(t)+X_{h_s,i}^n(t)\bigr)\le\lfloor nn_h\rfloor.
\label{eq:pf_capacity_constraints}
\end{equation}
The uncontrolled outcome probabilities are
\((p_{r\to g,i},p_{r\to h_s,i})=(p_{\mathrm{rej},i},p_{\mathrm{pass},i})\),
\((p_{h_d\to g,i},p_{h_d\to c,i})=(\alpha_i,1-\alpha_i)\), and
\((p_{h_s\to g,i},p_{h_s\to c,i})=(1-q_{\mathrm{acc},i},q_{\mathrm{acc},i})\),
where \(c\) denotes approval.  Mutually independent unit-rate Poisson
processes give the random-time-change representation
\(A_i^n(t)=N_{A,i}(n\lambda_it)\),
\(B_{\ell,i}^n(t)=N_{B,\ell,i}(\int_0^t\theta_{\ell,i}Q_{\ell,i}^n(s)\,ds)\),
and
\(S_{\ell\to m,i}^n(t)=N_{\ell\to m,i}(\int_0^t\mu_{\ell,i}p_{\ell\to m,i}
X_{\ell,i}^n(s)\,ds)\) for \(\ell\ne g\).
Generation completions, with compensator
\(\int_0^t\mu_{g,i}X_{g,i}^n(s)\,ds\), are split pathwise by the routing
control into \(S_{g\to r,i}^n+S_{g\to h_d,i}^n\); approvals are
\(C_i^n=S_{h_d\to c,i}^n+S_{h_s\to c,i}^n\), and the main text writes
\(X_i^n,Y_i^n,Z_{i,d}^n,Z_{i,s}^n\) for \(X_{g,i}^n,X_{r,i}^n,X_{h_d,i}^n,
X_{h_s,i}^n\).

\subsection{Fluid Model}
\label{app:fluid_model}

Scale every count and cumulative flow by \(n\) and write limits in
lowercase.  Any subsequential fluid limit obeys the node balances
\begin{align}
q_{\ell,i}(t)
&=q_{\ell,i}(0)+a_{\ell,i}(t)
+\sum_{k\in\mathcal I_\ell}s_{k\to\ell,i}(t)
-u_{\ell,i}(t)-b_{\ell,i}(t),\label{eq:fluid-balance}\\
x_{\ell,i}(t)
&=x_{\ell,i}(0)+u_{\ell,i}(t)
-\sum_{m\in\mathcal O_\ell}s_{\ell\to m,i}(t),\nonumber
\end{align}
with \(a_{g,i}(t)=\lambda_it\),
\(b_{\ell,i}(t)=\int_0^t\theta_{\ell,i}q_{\ell,i}(s)\,ds\), and
\begin{equation}
\dot s_{\ell\to m,i}(t)
=\mu_{\ell,i}p_{\ell\to m,i}x_{\ell,i}(t)\quad(\ell\ne g),\qquad
\dot s_{g\to r,i}(t)=\mu_{g,i}v_i(t),\quad
\dot s_{g\to h_d,i}(t)=\mu_{g,i}(x_i(t)-v_i(t)),
\label{eq:fluid-rates}
\end{equation}
where \(0\le v_i(t)\le x_i(t):=x_{g,i}(t)\) is the generation in service
routed to review.  The three resource pools obey the capacity constraints
\begin{equation}
\sum_ix_i(t)\le n_g,\quad \sum_ix_{r,i}(t)\le n_r,\quad
\sum_i(x_{h_d,i}(t)+x_{h_s,i}(t))\le n_h.
\label{eq:fluid-cap}
\end{equation}
The associated stationary planning program is
\eqref{eq:full_fluid_program} in Section~\ref{sec:fluid_model}.

\subsection{Proof Sketch of Theorem~\ref{thm:fluid-limit}}
\label{app:proof_fluid_limit}

Capacity bounds make every service clock \(O(nT)\).  Queue balances then
keep scaled waiting mass \(O_P(1)\) on \([0,T]\), so every coordinate of
the fluid-scaled process is stochastically bounded.  Admissible policies
move finitely many jobs per event, hence every scaled jump is \(O(1/n)\);
a localized modulus argument gives \(C\)-tightness.  The FSLLN and the
random-time-change theorem identify uncontrolled flows; the controlled
split is a measurable fraction of generation completions.  Passing to the
limit in the node balances yields
\eqref{eq:fluid-balance}--\eqref{eq:fluid-cap}.

\subsection{Proof Sketch of Lemma~\ref{prop:approved_flow_reduction}}
\label{app:proof_reduction}

Given a feasible point, the approved flows
\(f_i^D=\mu_{h,i}(1-\alpha_i)\hat z_{i,d}\) and
\(f_i^S=\mu_{h,i}q_{\mathrm{acc},i}\hat z_{i,s}\) satisfy
\eqref{eq:activity_lp}: obsolescence only removes work, so service at each
node is at most the output it receives, the mass balance of class \(i\)
bounds its approval rate by \(\lambda_i\), and the objectives agree.
Conversely, given feasible \((f^D,f^S)\), run the service levels that carry
these flows, leave downstream queues empty, and hold the excess arrivals in
the generation queue at \(\hat q_{g,i}=(\lambda_i-f_i^D-f_i^S)/\theta_i\).

\subsection{Proof Sketch of Proposition~\ref{prop:four_phase}}
\label{app:proof_four_phase}

Under Assumption~\ref{ass:overloaded_single} the arrival cap is redundant
and the problem is \eqref{eq:sc_lp}.  For fixed reviewed volume \(v\) the
objective increases in \(x\), so \(x(v)=\min\{n_g,E+p_{\mathrm{rej}}v\}\);
by Assumption~\ref{ass:informative_single} the objective
\(E+(p_{\mathrm{rej}}-\beta^{(I)})v\) then rises in \(v\) while generation
is slack and equals \(n_g-\beta^{(I)}v\), falling in \(v\), once it binds.
Hence \(v^*=\min\{J_{\mathrm{eff}},\,E/p_{\mathrm{pass}},\,(n_g-E)/p_{\mathrm{rej}}\}\),
the thresholds in \eqref{eq:sc_thresholds_123} are the \(n_h\) at which the
binding cap changes, and \(\phi^*=v^*/x^*\) gives the four expressions.

\subsection{Proof Sketches of Propositions~\ref{thm:multiclass_path}
and~\ref{prop:reviewed_switch}}
\label{app:proof_multiclass}

\emph{Proposition~\ref{thm:multiclass_path}.}  Part~1 is LP duality: the dual feasible set does not depend on \(s\), so
\(V(s)\) is the minimum of finitely many affine functions of \(s\), hence
concave and piecewise linear, and it is nondecreasing because the primal
feasible set grows with \(s\); the maintainer price \(\chi\) is a
supergradient of \(V\).  Part~2 uses that a
fixed optimal basis expresses the basic variables as an affine function of
the right-hand side, which is affine in \(s\).  For the sparsity bound,
count positive route variables at an extreme point, one per active class
plus one per class using both routes; there are at most as many as binding
constraints other than nonnegativity, three resource constraints plus one
cap per capped class, and removing the capped classes from both sides
leaves \(U(s)+L(s)\le3\).  For Part~3, as \(h\downarrow0\) only the
maintainer constraint binds, a fractional knapsack in reward per maintainer
hour; as \(h\to\infty\) it drops and the direct route earns the same reward
with weakly less generation and no AI review.

\emph{Proposition~\ref{prop:reviewed_switch}.}\label{app:proof_reviewed_switch}
With common parameters, \eqref{eq:sc_lp} extends class by class and two
upper bounds do the work: reward per unit of maintainer capacity is at most
\(q_{\mathrm{acc},i_A}/(1-\alpha)\), so reward is at most
\(q_{\mathrm{acc},i_A}E/(1-\alpha)\), attained by full review of class
\(i_A\) when it fits; and relieving the maintainer constraint by one unit
costs at least \(\eta_{i_B}<1\), so reward is at most
\(n_g-\eta_{i_B}(n_g-E)\), attained by filling generation and reviewing only
class \(i_B\) when it fits.  The bounds are affine in \(E\) with different
slopes, which orders the intervals.

\subsection{Proof Sketch of Theorem~\ref{thm:asymptotic_optimality}}
\label{app:sketch_asymptotic_opt}

\emph{Step 1 (upper bound).}  For any admissible policy,
\(\mathbb E[C_i^n(T)]/(nT)=\mu_{h,i}[(1-\alpha_i)\hat z_{i,d}+
q_{\mathrm{acc},i}\hat z_{i,s}]\), with \(\hat z\) the expected
time-average maintainer-service levels; summing the node balances shows
that the effort feeding this service is feasible for \eqref{eq:activity_lp}
up to \(O(1/(nT))\), so no policy exceeds \(R^*\) in the iterated limit.
Rate stability is not assumed.

\emph{Step 2 (stabilization converges).}  The cumulative quota keeps
\(|H_i^n-\phi_i^*G_i^n|<1\), so every fluid limit routes the planned
fraction, and a reservation below its target either has an empty queue or
admits at full rate.  For each class, the return-weighted deficit
\(L_i(t)=(x_i^*-x_i)^+ +\rho_i(y_i^*-y_i)^+
+\alpha_i(z_{i,d}^*-z_{i,d})^+ +(1-q_{\mathrm{acc},i})(z_{i,s}^*-z_{i,s})^+\),
with \(\rho_i\) the probability that a reviewed task returns to
generation, is a Lyapunov function: each downstream deficit is fed only by
the one upstream, and below its target the generation coordinate admits all
input, so the target balance leaves a drift of \(-c_id_x\) with
\(c_i=\mu_{g,i}(1-\alpha_i)(1-\beta_i^{(I)}\phi_i^*)>0\).  No overload
condition enters.

\emph{Step 3 (pooled mode holds the target).}  Since \(\delta_n\to0\), a fluid
trajectory leaves stabilization only at the target, where a positive effort
deficit leaves a backlog of the same fluid size, so the coordinate is
nonempty and the largest-deficit rule keeps each cumulative service effort
within \(o(n)\) of \(nb_c^*t\), the idle coordinate absorbing planned
idleness; every fluid limit thus has time-averaged service at the target,
and a deviation above \(2n\delta_n\) returns the policy to Step~2.
\emph{Step 4 (lower bound).}  Along a subsequence of \(n\) attaining the
\(\liminf\) of the scaled reward on \([0,T]\), Steps~2 and~3 put the fluid
limit's reward at target after a transient independent of \(n\); letting
\(T\to\infty\) with Step~1 gives \eqref{eq:asymptotic_opt}.
\fi
\ifdefined\MainAppendix\else
\subsection{Proof of Lemma~\ref{prop:approved_flow_reduction}}
\label{ec:proof_reduction}

\begin{proof}{Proof of Lemma~\ref{prop:approved_flow_reduction}.}
Let a feasible point of the full program \eqref{eq:full_fluid_program} be given.  Define the approved
flows \(f_i^D=\mu_{h,i}(1-\alpha_i)\hat z_{i,d}\),
\(f_i^S=\mu_{h,i}q_{\mathrm{acc},i}\hat z_{i,s}\) and the useful-service
variables
\[
v_i:=\frac{\mu_{h,i}}{\mu_{g,i}p_{\mathrm{pass},i}}\,\hat z_{i,s},
\qquad
x_i:=v_i+\frac{\mu_{h,i}}{\mu_{g,i}}\,\hat z_{i,d}.
\]
These are the generation and review efforts that feed maintainer service.
Downstream obsolescence only weakens the balances: maintainer service on the
direct route cannot exceed direct generation output, maintainer service on
the screened route cannot exceed AI-approved output, and AI-review service
cannot exceed the generation output routed to review.  Hence
\(0\le v_i\le x_i\), \(\sum_ix_i\le n_g\), and
\(\sum_i(\mu_{g,i}/\mu_{r,i})v_i\le n_r\), while
\[
\sum_i\frac{\mu_{g,i}}{\mu_{h,i}}
\bigl(x_i-p_{\mathrm{rej},i}v_i\bigr)
=\sum_i(\hat z_{i,d}+\hat z_{i,s})\le n_h.
\]
The class-\(i\) approval rate of the full point is
\(\mu_{h,i}[(1-\alpha_i)\hat z_{i,d}+q_{\mathrm{acc},i}\hat z_{i,s}]
=\mu_{g,i}(1-\alpha_i)(x_i-\beta_i^{(I)}v_i)\), using
\(p_{\mathrm{pass},i}q_{\mathrm{acc},i}=(1-\alpha_i)(1-\beta_i^{(I)})\).
The system-wide class-\(i\) mass balance bounds this rate by \(\lambda_i\),
because every other external departure is an obsolescence.  Thus
\((x_i,v_i)_i\) is feasible for \eqref{eq:mc_reduced}, its image
\((f^D,f^S)\) is feasible for \eqref{eq:activity_lp}, and the objectives
agree.  Therefore \(V_{\mathrm{full}}(n_h)\le V(n_h)\).

Conversely, take any feasible \((f^D,f^S)\) in \eqref{eq:activity_lp}, set
\[
v_i=\frac{f_i^S}{\mu_{g,i}(1-\alpha_i)(1-\beta_i^{(I)})},
\qquad
x_i=v_i+\frac{f_i^D}{\mu_{g,i}(1-\alpha_i)},
\]
and reconstruct
\[
\hat y_i=\frac{\mu_{g,i}}{\mu_{r,i}}v_i,\qquad
\hat z_{i,d}=\frac{\mu_{g,i}}{\mu_{h,i}}(x_i-v_i),\qquad
\hat z_{i,s}=\frac{\mu_{g,i}}{\mu_{h,i}}p_{\mathrm{pass},i}v_i,
\]
with \(\hat x_i=x_i\), \(\hat v_i=v_i\), empty downstream queues, and
\[
\hat q_{g,i}:=
\frac{\lambda_i-\mu_{g,i}(1-\alpha_i)
(x_i-\beta_i^{(I)}v_i)}{\theta_i}\ge0.
\]
The stationary balances hold, downstream obsolescence is zero, and the
full-program objective equals \(\sum_ir_i(f_i^D+f_i^S)\).  Therefore
\(V_{\mathrm{full}}(n_h)\ge V(n_h)\), proving equality.
\end{proof}

\subsection{Proof of Proposition~\ref{prop:four_phase}}
\label{ec:proof_four_phase}

\begin{proof}{Proof.}
The thresholds in \eqref{eq:sc_thresholds_123} satisfy
\[
n_h^{(2)}-n_h^{(1)}
=\frac{\mu_g}{\mu_h}(n_g-J_{\mathrm{eff}})\ge0,\qquad
n_h^{(3)}-n_h^{(2)}
=\frac{\mu_g}{\mu_h}p_{\mathrm{rej}}J_{\mathrm{eff}}\ge0,
\]
because \(p_{\mathrm{pass}}+p_{\mathrm{rej}}=1\); both inequalities are
strict when \(0<J_{\mathrm{eff}}<n_g\), since \(p_{\mathrm{rej}}>0\).

Under Assumption~\ref{ass:overloaded_single} the arrival constraint is
redundant, so the problem is \eqref{eq:sc_lp}.  For fixed \(v\) the
objective increases in \(x\), so \(x(v)=\min\{n_g,E+p_{\mathrm{rej}}v\}\).
Feasibility of \(v\le x(v)\) together with the review constraint is
equivalent to \(0\le v\le\min\{J_{\mathrm{eff}},E/p_{\mathrm{pass}}\}\).
Assumption~\ref{ass:informative_single} gives
\[
p_{\mathrm{rej}}-\beta^{(I)}
=\frac{p_{\mathrm{pass}}}{1-\alpha}
\bigl(q_{\mathrm{acc}}-(1-\alpha)\bigr)>0,
\]
so \(x(v)-\beta^{(I)}v=E+(p_{\mathrm{rej}}-\beta^{(I)})v\) increases in
\(v\) while the generation constraint is slack and equals
\(n_g-\beta^{(I)}v\), decreasing in \(v\), afterward.  Hence, for
\(E<n_g\),
\[
v^*=\min\!\left\{J_{\mathrm{eff}},\frac{E}{p_{\mathrm{pass}}},
\frac{n_g-E}{p_{\mathrm{rej}}}\right\},
\qquad x^*=\min\{n_g,E+p_{\mathrm{rej}}v^*\},
\]
and \(E\ge n_g\) gives \((x^*,v^*)=(n_g,0)\).  The threshold ordering
splits this formula into the four intervals of the proposition, on which
\[
(x^*,v^*)=
\left(\frac{E}{p_{\mathrm{pass}}},\frac{E}{p_{\mathrm{pass}}}\right),\quad
(E+p_{\mathrm{rej}}J_{\mathrm{eff}},J_{\mathrm{eff}}),\quad
\left(n_g,\frac{n_g-E}{p_{\mathrm{rej}}}\right),\quad
(n_g,0),
\]
respectively.  Taking \(\phi^*=v^*/x^*\) gives the four expressions.
\end{proof}

\subsection{Proof of Proposition~\ref{thm:multiclass_path}}
\label{ec:proof_multiclass}

\begin{proof}{Proof.}
Index routes by \(k=(i,m)\).  Write \(A\) for the three-row matrix whose
columns are the resource vectors \(a_i^m\), \(B\) for the class-incidence
matrix, and \(c\) for the vector of route rewards \(r_i\).  By LP duality,
\[
V(s)=\min_{\pi,\nu\ge0}
\left\{n_g\omega+n_r\kappa+s\chi+\lambda^\top\nu:
A^\top\pi+B^\top\nu\ge c\right\},
\quad \pi=(\omega,\kappa,\chi).
\]
The dual feasible set does not depend on \(s\) and has finitely many
extreme points, so \(V\) is the pointwise minimum of finitely many affine
functions of \(s\): it is concave and piecewise linear, and it is
nondecreasing because the primal feasible set expands with \(s\).  The
maintainer price \(\chi\) is a supergradient of \(V\), so one-sided optimal
prices can be selected nonincreasing.  The kinks of \(V\), together with
the finitely many points at which an optimal basis changes without changing
the affine piece, form the partition \(s_0<\cdots<s_{K+1}\); the slope on
each piece is the selected price \(\chi_k\).  This proves Part~1.

On an interval where one primal--dual basis pair remains optimal, the basic
variables equal the inverse basis matrix times a right-hand side that is
affine in \(s\); hence the allocation is affine in \(s\), and the active
routes and prices are constant.  This gives the form \(u_k+s\,d_k\).

For sparsity, select an optimal extreme point.  Let \(C\) be the number of
active classes at their arrival caps, \(U\) the number of active classes
strictly below their caps, and \(L\) the number of active classes using
both routes.  The number of positive route variables is \(P=C+U+L\).  At an
extreme point, \(P\) is at most the number of linearly independent binding
constraints other than nonnegativity, which is at most the three resource
constraints plus the \(C\) binding arrival caps.  Thus \(P\le C+3\), which
is \eqref{eq:multiclass_sparsity}.

For Part~3, note that every route consumes positive maintainer capacity.
As \(h\downarrow0\), all flows are \(O(h)\), so the generation, AI-review,
and arrival constraints are slack for sufficiently small \(h\).  The
remaining problem is a fractional knapsack with maintainer capacity as its
only binding resource, and it uses only routes maximizing
\(r_k/a_{e,k}\).  At the other endpoint, remove the maintainer constraint.
Within each class, the direct route earns the same reward as the screened
route, uses weakly less generation because \(1-\beta_i^{(I)}\le1\), and
uses no AI-review capacity; replacing screened flow by direct flow
therefore yields a direct-only optimum.  Its maintainer use is finite, and
for all \(h\) above that level it remains feasible and optimal.
\end{proof}

\subsection{Proof of Proposition~\ref{prop:reviewed_switch}}
\label{ec:proof_reviewed_switch}

\begin{proof}{Proof of Proposition~\ref{prop:reviewed_switch}.}
With common \(r,\mu_g,\mu_r,\mu_h,\alpha\), dividing
\eqref{eq:mc_reduced} by \(r\mu_g(1-\alpha)\) and using the units
\eqref{eq:sc_normalized_capacities} gives
\(\max\sum_i(x_i-\beta_i^{(I)}v_i)\) subject to \(\sum_ix_i\le n_g\),
\(\sum_iv_i\le J\), \(\sum_i(x_i-p_{\mathrm{rej},i}v_i)\le E\), and
\(0\le v_i\le x_i\); Assumption~\ref{ass:overloaded_single} makes the
arrival caps slack.  Since
\(p_{\mathrm{rej},i}=(1-\alpha)\beta_i^{(I)}+\alpha(1-\beta_i^{(II)})\),
\(\eta_i=\beta_i^{(I)}/p_{\mathrm{rej},i}<1\) if and only if
\(1-\beta_i^{(II)}>\beta_i^{(I)}\), which is
Assumption~\ref{ass:informative_single}; \(\eta_i\) is decreasing in
\((1-\beta_i^{(II)})/\beta_i^{(I)}\), defects caught per merge-ready change
sent back.  The number of classes plays no role below.

\emph{Bound A.}  Split \(x_i=(x_i-v_i)+v_i\).  The direct part earns
\(x_i-v_i\) and uses maintainer capacity \(x_i-v_i\); the screened part
earns \((1-\beta_i^{(I)})v_i\) and uses \(p_{\mathrm{pass},i}v_i\).  Since
\((1-\beta_i^{(I)})/p_{\mathrm{pass},i}=q_{\mathrm{acc},i}/(1-\alpha)>1\)
under Assumption~\ref{ass:informative_single}, every feasible point earns
at most \(q_{\mathrm{acc},i_A}E/(1-\alpha)\), with equality only if all
maintainer capacity is used by screened class-\(i_A\) work.  When
\(E\le p_{\mathrm{pass},i_A}J_{\mathrm{eff}}\), the point
\(x_{i_A}=v_{i_A}=E/p_{\mathrm{pass},i_A}\), all else zero, is feasible
(\(v_{i_A}\le J_{\mathrm{eff}}=\min\{n_g,J\}\)) and attains the bound.
Strictness of the \(q_{\mathrm{acc}}\) ranking makes it the unique optimum.
This is Part~1.

\emph{Bound B.}  Let \(X=\sum_ix_i\le n_g\).  The maintainer constraint
gives \(\sum_ip_{\mathrm{rej},i}v_i\ge X-E\), so
\(\sum_i\beta_i^{(I)}v_i\ge\eta_{i_B}(X-E)^+\), and the reward is at most
\(X-\eta_{i_B}(X-E)^+\).  Because \(\eta_{i_B}<1\), this is increasing in
\(X\), hence at most \(n_g-\eta_{i_B}(n_g-E)\), with equality only if
\(X=n_g\), \(\sum_ip_{\mathrm{rej},i}v_i=n_g-E\), and \(v_i=0\) for
\(i\ne i_B\) by strictness of the ranking.  When
\(n_g-p_{\mathrm{rej},i_B}J_{\mathrm{eff}}\le E<n_g\), the point
\(v_{i_B}=(n_g-E)/p_{\mathrm{rej},i_B}\le J_{\mathrm{eff}}\),
\(x_{i_B}\ge v_{i_B}\), \(\sum_ix_i=n_g\), \(v_i=0\) otherwise, is feasible
and attains the bound.  This is Part~2; the split of direct effort between
the classes is immaterial because rewards are equal.

\emph{Ordering.}  Both bounds hold for every \(E\).  If both candidate
points were feasible on an interval of \(E\), both would be optimal there
and the two bounds would coincide on that interval.  They are affine in
\(E\) with slopes \(q_{\mathrm{acc},i_A}/(1-\alpha)>1\) and
\(\eta_{i_B}<1\), so they agree at no more than one point.  Hence
\(p_{\mathrm{pass},i_A}J_{\mathrm{eff}}\le n_g-p_{\mathrm{rej},i_B}J_{\mathrm{eff}}\).
In the language of Section~\ref{subsec:multiclass}, Part~1 is the regime
with only \(\chi>0\), where the index \eqref{eq:general_priority_index}
ranks by \(r_i/a_{h,i}^S\); Part~2 has \(\omega,\chi>0\) and \(\kappa=0\),
where it ranks screened routes by generation used per unit of relief.
\end{proof}

\subsection{Two Extensions of the Error Model}
\label{ec:revision_benefit}

\paragraph{Stage-dependent revision quality.}
Suppose that an output revised after an AI rejection is merge-ready with
probability \(1-\alpha_i'\) rather than \(1-\alpha_i\), while an output
regenerated after a maintainer rejection keeps probability \(1-\alpha_i\).  On the screened
route each attempt is then fresh (\(F\)) or guided (\(R\)).  Write
\(s(\alpha):=(1-\alpha)(1-\beta_i^{(I)})\),
\(\rho(\alpha):=(1-\alpha)\beta_i^{(I)}+\alpha(1-\beta_i^{(II)})\), and
\(\kappa(\alpha):=\alpha\beta_i^{(II)}\) for the probabilities that an
attempt is approved, returned by the reviewer (to stage \(R\)), or returned
by the maintainer (to stage \(F\)); a fresh attempt uses \(\alpha_i\) and a
guided attempt \(\alpha_i'\).  The expected numbers of attempts
\(m_F,m_R\) and of maintainer inspections \(e_F,e_R\) until approval solve
\begin{align*}
m_F&=1+\rho(\alpha_i)\,m_R+\kappa(\alpha_i)\,m_F, &
m_R&=1+\rho(\alpha_i')\,m_R+\kappa(\alpha_i')\,m_F,\\
e_F&=s(\alpha_i)+\kappa(\alpha_i)+\rho(\alpha_i)\,e_R+\kappa(\alpha_i)\,e_F, &
e_R&=s(\alpha_i')+\kappa(\alpha_i')+\rho(\alpha_i')\,e_R+\kappa(\alpha_i')\,e_F,
\end{align*}
and the screened activity vector becomes
\(a_i^S=(m_F/\mu_{g,i},\,m_F/\mu_{r,i},\,e_F/\mu_{h,i})\), while
\(a_i^D\) is unchanged; with \(\alpha_i'=\alpha_i\) this reduces to
\(m_F=1/(p_{\mathrm{pass},i}q_{\mathrm{acc},i})\) and
\(e_F=1/q_{\mathrm{acc},i}\), the coefficients of \eqref{eq:activity_lp}.
Lemma~\ref{prop:approved_flow_reduction} and Parts~1 and~2 of
Proposition~\ref{thm:multiclass_path} use activity vectors only through
finiteness and positivity and hold verbatim.  The throttling phase of
Proposition~\ref{prop:four_phase} and the direct-only endpoint in Part~3 of
Proposition~\ref{thm:multiclass_path} use the sign of
\(a_{g,i}^S-a_{g,i}^D\), positive under independent attempts whenever
\(\beta_i^{(I)}>0\).  If \(a_{g,i}^S\le a_{g,i}^D\), the
screened route weakly dominates the direct route in every coordinate except
review capacity, so review of class \(i\) is limited only by review
capacity and neither statement applies to that class.  For the calibration
of Section~\ref{subsec:calibration}, \(a_{g,i}^S\le a_{g,i}^D\) requires
\(1-\alpha_i'\) to exceed 0.416 for the low-quality tier and 0.864 for the
medium-quality tier, and it cannot hold for the high-quality tier; after one
guided revision the source reports 56.9\% for the low-quality tier.

\paragraph{Imperfect maintainers.}
Suppose a maintainer approves a defective change with probability
\(\gamma_i\in[0,1)\) and each escaped defect costs \(c_i\ge0\).  A direct
attempt is approved with probability \(m_i+\alpha_i\gamma_i\), earning
\(r_i\) with probability \(m_i\) and \(-c_i\) with probability
\(\alpha_i\gamma_i\), and is returned otherwise.  A screened attempt reaches
the maintainer with probability \(p_{\mathrm{pass},i}\), is approved
merge-ready with probability \(m_it_i\), approved defective with
probability \(\alpha_i\beta_i^{(II)}\gamma_i\), and returned otherwise.
Per unit of generation in use, expected reward is therefore
\(\mu_{g,i}(r_im_i-c_i\alpha_i\gamma_i)\) on the direct route and
\(\mu_{g,i}(r_im_it_i-c_i\alpha_i\beta_i^{(II)}\gamma_i)\) on the screened
route, while review and maintainer use per unit of generation are exactly
those of \eqref{eq:mc_reduced}.  With
\(\tilde r_i:=r_im_i-c_i\alpha_i\gamma_i>0\) and
\[
\tilde\beta_i:=\frac{r_im_i\beta_i^{(I)}-c_i\alpha_i\gamma_i(1-\beta_i^{(II)})}{\tilde r_i}
\le\beta_i^{(I)},
\]
the planning program in the variables \(x_i,v_i\) is \eqref{eq:mc_reduced}
with objective \(\sum_i\tilde r_i\mu_{g,i}(x_i-\tilde\beta_iv_i)\), the
same capacity constraints, and an arrival cap that remains linear.  Hence
Lemma~\ref{prop:approved_flow_reduction} and Parts~1 and~2 of
Proposition~\ref{thm:multiclass_path} hold verbatim.  The single-class problem
is \eqref{eq:sc_lp} with \(\beta^{(I)}\) replaced by \(\tilde\beta\) in the
objective only; since the constraints, and hence the thresholds
\eqref{eq:sc_thresholds_123}, are unchanged, Proposition~\ref{prop:four_phase} holds verbatim
when \(\tilde\beta>0\), whereas when \(\tilde\beta\le0\) the objective is
nondecreasing in \(v\) once generation binds, so \(v^*=J_{\mathrm{eff}}\)
for all \(n_h\ge n_h^{(1)}\) and the throttling and bypass phases are
replaced by permanent reviewer saturation.  Likewise the direct-only
endpoint of Part~3 of Proposition~\ref{thm:multiclass_path} holds if and only
if \(\tilde\beta_i\ge0\) for every class.  Per maintainer hour, screened
work earns \(r_iq_{\mathrm{acc},i}-c_i(1-q_{\mathrm{acc},i})\gamma_i\)
against \(r_im_i-c_i\alpha_i\gamma_i\) for direct work, more whenever the
reviewer is informative, so the comparison behind Phases~1 and~2 keeps its
sign.
\fi
\ifdefined\MainAppendix\else
\section{Fluid Limit}
\label{ec:proof_fluid_limit}

\begin{proof}{Proof of Theorem~\ref{thm:fluid-limit}.}
Fix \(T>0\) and write \(\bar W^n=W^n/n\).  Use the node notation of
Appendix~\ref{app:state_equations}.  Define total service completions
pathwise by \(D_{g,i}^n=S_{g,i}^n\),
\(D_{r,i}^n=S_{r\to g,i}^n+S_{r\to h_s,i}^n\), and
\(D_{h,i}^n=S_{h_d\to g,i}^n+S_{h_d\to c,i}^n+S_{h_s\to g,i}^n+S_{h_s\to c,i}^n\).
These are sums of outcome splits, not independent primitives.  Their
compensators are the integrals of
\(\mu_{g,i}X_i^n\), \(\mu_{r,i}Y_i^n\), and
\(\mu_{h,i}(Z_{i,d}^n+Z_{i,s}^n)\), respectively; arrival and obsolescence
compensators are \(n\lambda_i t\) and
\(\int_0^t\theta_{\ell,i}Q_{\ell,i}^n(s)\,ds\).

\paragraph{Tightness.}
Capacity bounds make every service clock \(O(nT)\).  Queue balances and
nonnegative admissions give
\begin{align*}
Q_{g,i}^n&\le Q_{g,i}^n(0)+A_i^n+D_{r,i}^n+D_{h,i}^n,\quad
Q_{r,i}^n\le Q_{r,i}^n(0)+D_{g,i}^n,\\
Q_{h_d,i}^n&\le Q_{h_d,i}^n(0)+D_{g,i}^n,\quad
Q_{h_s,i}^n\le Q_{h_s,i}^n(0)+D_{r,i}^n.
\end{align*}
The FSLLN and Assumption~\ref{ass:fluid-init} therefore imply that all
scaled queues are \(O_P(1)\) uniformly on \([0,T]\), so every obsolescence
clock is \(O_P(nT)\).  The same argument bounds all outcome counts and the
controlled routing splits; each admission count is bounded through its node
balance by initial mass plus cumulative input.  Hence every coordinate of
\(\bar{\mathcal X}^n\) is stochastically bounded.

\paragraph{Continuity of limits.}
Primitive events move each physical state coordinate by at most one job,
and the bounded-decisions-per-event condition on admissible policies
bounds control-induced moves per event.
Thus every fluid-scaled jump is \(O(1/n)\).

To rule out an accumulation of such jumps, fix $M<\infty$ and restrict to
the event on which the supremum of the total scaled mass in all four waiting
queues is at most \(M\); the probability of its complement can be made
arbitrarily small uniformly in $n$ by the tightness estimates above. On this
event, the aggregate intensity of arrivals, service completions, and
obsolescence is bounded by $nK_M$ for a constant $K_M$. Hence, uniformly over
stopping times $\tau\le T$ and $0\le h\le\delta$, the number of primitive
events on $(\tau,\tau+h]$ is stochastically dominated by a Poisson random
variable with mean $nK_M\delta$. Since each event causes at most a fixed
number of state and control moves, the scaled oscillation over such an
interval is $O_P(\delta)+o_P(1)$. Letting first $n\to\infty$, then
$\delta\downarrow0$, and finally $M\uparrow\infty$ verifies the localized
modulus condition \citep[Theorem~13.2]{billingsley2013convergence}.  Together
with the endpoint bounds above, this proves \(C\)-tightness.  Prohorov's
theorem now gives a weakly convergent subsequence
\(\bar{\mathcal X}^{n_k}\Rightarrow\bar{\mathcal X}\), whose limit is
continuous; on a Skorokhod representation space the convergence is uniform
on compact intervals.

\paragraph{Identification of the limit.}
The FSLLN and random-time-change theorem identify every uncontrolled limit;
for example,
\[
\bar A_i^n\to\lambda_i(\cdot),\quad
\bar D_{g,i}^n\to\int_0^\cdot\mu_{g,i}x_i(s)\,ds,\quad
\bar B_{\ell,i}^n\to\int_0^\cdot\theta_{\ell,i}q_{\ell,i}(s)\,ds
\]
uniformly on compact intervals, with the analogous service-outcome limits.
For the controlled split, its limit \(s_{g\to r,i}\) is dominated, as a
measure, by the total generation-completion limit \(d_{g,i}\).  The
Radon--Nikodym theorem gives a measurable \(\phi_i(t)\in[0,1]\) such that
\[
s_{g\to r,i}(t)=\int_0^t \phi_i(s)\,d d_{g,i}(s)
=\int_0^t \mu_{g,i}\phi_i(s)x_i(s)\,ds.
\]
Set \(v_i=\phi_ix_i\).  Passing to the limit in the balances and capacity
inequalities gives \eqref{eq:fluid-balance}--\eqref{eq:fluid-rates} and
\eqref{eq:fluid-cap}.
\end{proof}
\fi
\ifdefined\MainAppendix\else
\section{Proof of Theorem~\ref{thm:asymptotic_optimality}}
\label{ec:proof_asymptotic_opt}

We first prove a universal LP upper bound.  We then show that deterministic
route quotas and the stabilization regulator converge globally to the
selected target; pooled largest-deficit service preserves that target after
the switch.  These properties give the matching lower bound.

\begin{lemma}[Universal time-average upper bound]\label{lem:baseline_time_average_ub}
Under Assumptions~\ref{ass:regular_params} and~\ref{ass:fluid-init}, every admissible policy sequence satisfies
\[
\limsup_{T\to\infty}\limsup_{n\to\infty}
\frac{1}{nT}\mathbb{E}\!\left[\sum_i r_i C_i^n(T)\right]\le R^*.
\]
\end{lemma}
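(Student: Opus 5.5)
The plan is to work at fixed \(n\) and \(T\), build expected time-averaged service levels, and show that these levels are feasible for the full stationary program \eqref{eq:full_fluid_program} up to errors that vanish in the iterated limit. Lemma~\ref{prop:approved_flow_reduction} then turns that feasibility into the bound \(R^*=V(n_h)\).

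Define
\[
\hat z^{n,T}_{i,d}:=\frac{1}{nT}\mathbb E\int_0^TZ_{i,d}^n(s)\,ds,
\]
and analogously \(\hat x^{n,T}_i,\hat y^{n,T}_i,\hat z^{n,T}_{i,s}\). Let \(\hat v^{n,T}_i\) be the generation completions routed to review, divided by \(\mu_{g,i}nT\) and taken in expectation. Define \(\hat q^{n,T}_{\ell,i}\) as the time-averaged queues.

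The key steps, in order, are as follows.

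\emph{Step 1: express approvals through occupancies.} Approval counts are thinned service completions with compensators \(\int\mu_{h,i}(1-\alpha_i)Z_{i,d}^n\) and \(\int\mu_{h,i}q_{\mathrm{acc},i}Z_{i,s}^n\). Optional stopping gives
\[
\mathbb E[C_i^n(T)]/(nT)=\mu_{h,i}\bigl[(1-\alpha_i)\hat z^{n,T}_{i,d}+q_{\mathrm{acc},i}\hat z^{n,T}_{i,s}\bigr]
\]
exactly. This is the objective \eqref{eq:full_fluid_obj} evaluated at the averaged levels.

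\emph{Step 2: verify the capacities.} The capacity inequalities \eqref{eq:pf_capacity_constraints} hold pathwise, so \eqref{eq:fluid-ss-cap} holds for the averages after dividing by \(n\).

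\emph{Step 3: verify the balances.} Take expectations in the node balances \eqref{eq:pf_balance_compact} at time \(T\), replace every counting process by its compensator, and divide by \(nT\). Each stationary balance in \eqref{eq:fluid-ss-balances} then holds up to the error term
\[
\epsilon^{n,T}=\frac{\mathbb E[Q^n_{\ell,i}(T)+X^n_{\ell,i}(T)]-\mathbb E[Q^n_{\ell,i}(0)+X^n_{\ell,i}(0)]}{nT}.
\]
The initial term is \(O(1/T)\) by Assumption~\ref{ass:fluid-init}. For the terminal term, in-service counts are \(O(n)\) by capacity. Queues are \(O(nT)\) in expectation, bounded by initial mass plus cumulative input as in the proof of Theorem~\ref{thm:fluid-limit}, so these errors do not vanish automatically.

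Rather than controlling terminal queues directly, I would weaken each balance to an inequality. From node balances, admissions satisfy \(u_{\ell,i}\le\) input \(+\) initial mass, and service completions equal admissions plus initial in-service minus final in-service, which is at most input \(+\,O(n)\). This gives the chain of inequalities used in the proof of Lemma~\ref{prop:approved_flow_reduction}:
\begin{itemize}
\item direct maintainer service is at most routed direct generation output;
\item screened maintainer service is at most \(p_{\mathrm{pass},i}\) times review service;
\item review service is at most routed review output;
\end{itemize}
each up to \(O(1/T)\), and approvals are at most \(\lambda_i+O(1/T)\) via the global class mass balance. The forward direction of the proof of Lemma~\ref{prop:approved_flow_reduction} uses only these inequalities. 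So \((x_i,v_i)\) defined from \(\hat z\) as there is feasible for \eqref{eq:mc_reduced} with right-hand sides inflated by \(O(1/T)\). Rate stability is not needed, matching the remark in the Appendix sketch.

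\emph{Step 4: pass to the limits.} Bound the reward by the perturbed LP value \(V_\epsilon\) with \(\epsilon=O(1/T)\), uniformly in \(n\). Continuity of the LP value in the right-hand side (a Lipschitz bound via the finitely many dual extreme points from the proof of Proposition~\ref{thm:multiclass_path}) gives \(V_\epsilon\le R^*+K\epsilon\). Take \(\limsup_n\), then \(T\to\infty\).

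The main obstacle is making the inequality chain in Step 3 rigorous at the level of expectations. Completions at a node must be bounded by its inflow plus \(O(n)\) initial and in-service mass, for arbitrary nonanticipative routing. The routed counts \(S^n_{g\to r,i}\) are not Poisson thinnings, so their expectation must be tied to \(\hat v^{n,T}_i\) by definition rather than by a compensator. I would handle this by defining \(\hat v^{n,T}_i\) directly from the expected routed completion count, which is bounded by the expected generation completions \(\mu_{g,i}nT\,\hat x^{n,T}_i\). Then \(0\le\hat v\le\hat x\) holds automatically, and only compensator identities for the uncontrolled flows are needed.
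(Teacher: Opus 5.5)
Your proposal is correct and follows the paper's proof. It uses the same exact compensator identity for \(\mathbb E[C_i^n(T)]/(nT)\), the same node-wise bounds (completions at a node are at most routed input plus initial mass) feeding \(v_i=\mu_{h,i}\hat z_{i,s}/(\mu_{g,i}p_{\mathrm{pass},i})\) and \(x_i=v_i+\mu_{h,i}\hat z_{i,d}/\mu_{g,i}\), and the same system-wide mass balance for the arrival cap, without rate stability. The only difference is the last step: the paper passes to convergent subsequences of the averaged service levels so that feasibility holds exactly in the limit, whereas you keep \((n,T)\) fixed and absorb the \(O(1/T)\) slack, uniform in \(n\) by Assumption~\ref{ass:fluid-init}, through Lipschitz continuity of the LP value in its right-hand side, which handles the iterated \(\limsup\) directly without a diagonal argument.
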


\begin{proof}{Proof.}
The argument constructs a feasible reduced-LP point from work that is actually processed by maintainers; this avoids imposing rate stability on arbitrary policies. For fixed $(n,T)$, define the deterministic expected average maintainer-service levels
\[
\hat z_{i,p}^n(T):=\frac{1}{nT}\,
\mathbb E\!\left[\int_0^T Z_{i,p}^n(s)\,ds\right],
\qquad p\in\{d,s\}.
\]
The random time-change representations give the exact expected completion-rate identity
\begin{equation}
\frac{\mathbb E[C_i^n(T)]}{nT}
=\mu_{h,i}\Big[(1-\alpha_i)\hat z_{i,d}^n(T)
+q_{\mathrm{acc},i}\hat z_{i,s}^n(T)\Big].
\label{eq:ec_actual_expert_reward}
\end{equation}

Take sequences realizing the iterated limsup in the lemma and then a subsequence along which all bounded expected average service levels converge. Write the limiting maintainer-service levels as $(z_{i,d},z_{i,s})$ and define
\begin{equation}
v_i:=\frac{\mu_{h,i}}{\mu_{g,i}p_{\mathrm{pass},i}}\,z_{i,s},
\qquad
x_i:=v_i+\frac{\mu_{h,i}}{\mu_{g,i}}\,z_{i,d}.
\label{eq:ec_useful_xv}
\end{equation}
These quantities retain only generation and review effort that feeds maintainer service.

We verify feasibility of $(x_i,v_i)$ for the reduced LP. Summing the queue and in-service balances at the AI-review node and at the two human-approval nodes gives
\begin{align*}
S_{r,i}^n(T)
&\le S_{g\to r,i}^n(T)+Q_{r,i}^n(0)+Y_i^n(0),\\
S_{h_d,i}^n(T)
&\le S_{g\to h_d,i}^n(T)+Q_{h_d,i}^n(0)+Z_{i,d}^n(0),\\
S_{h_s,i}^n(T)
&\le S_{r\to h_s,i}^n(T)+Q_{h_s,i}^n(0)+Z_{i,s}^n(0).
\end{align*}
where each $S$ without an outcome subscript denotes total completions at that node. Divide by $nT$, take expectations, and use the exact compensator means. Assumption~\ref{ass:fluid-init} makes all initial boundary terms vanish when the outer limit $T\to\infty$ is taken. The first inequality and the AI-review capacity imply $\sum_i(\mu_{g,i}/\mu_{r,i})v_i\le n_r$; combining the three inequalities with
$S_{g\to r,i}^n+S_{g\to h_d,i}^n=S_{g,i}^n$ and the generation capacity gives $\sum_i x_i\le n_g$.
Clearly $0\le v_i\le x_i$. Moreover, by \eqref{eq:ec_useful_xv},
\[
\sum_i\frac{\mu_{g,i}}{\mu_{h,i}}
\bigl(x_i-p_{\mathrm{rej},i}v_i\bigr)
=\sum_i(z_{i,d}+z_{i,s})\le n_h,
\]
so the human-approval capacity constraint also holds.

It remains to check arrival feasibility. Let $W_i^n(t)$ be the total class-$i$ population in the network. The system-wide mass balance gives
\[
C_i^n(T)=W_i^n(0)+A_i^n(T)-B_{g,i}^n(T)-B_{r,i}^n(T)
-B_{h_d,i}^n(T)-B_{h_s,i}^n(T)-W_i^n(T)
\le W_i^n(0)+A_i^n(T).
\]
After division by $nT$ and the iterated limit, this yields
\[
\mu_{g,i}(1-\alpha_i)(x_i-\beta_i^{(I)}v_i)\le\lambda_i,
\]
where equality between the left-hand side and the limiting reward rate follows from \eqref{eq:ec_actual_expert_reward}, \eqref{eq:ec_useful_xv}, and
$p_{\mathrm{pass},i}q_{\mathrm{acc},i}
=(1-\alpha_i)(1-\beta_i^{(I)})$.
Thus $(x_i,v_i)$ is feasible for \eqref{eq:mc_reduced}, and its reduced-LP objective equals the limiting expected reward. Maximizing over the reduced LP proves the asserted upper bound by $R^*$.
\end{proof}

\begin{lemma}[Fluid complementarity of virtual reservations]\label{lem:reservation_complementarity}
Consider one virtual reservation in the \(n\)th system, with integer
reservation size \(m_n\), queue \(Q^n\), in-service process \(A^n\),
cumulative input \(G^n\), and cumulative admission \(U^n\).  Suppose
\(m_n/n\to a^*\), reservations are not borrowed, and after each primitive
event each pool makes at most one catch-up admission to a nonempty
coordinate with maximal reservation shortfall.  Along any fluid-limit
subsequence for which
\[
(\bar Q^n,\bar A^n,\bar G^n,\bar U^n):=(Q^n/n,A^n/n,G^n/n,U^n/n)
\Rightarrow (q,a,g,u)
\]
uniformly on compact intervals, the limit satisfies the following
properties.  If \(a^*=0\), then \(a(t)\to0\).  If \(a^*>0\), the
positive-overfill part \((a(t)-a^*)^+\) vanishes after a finite transient.
After a further finite regularization transient, at every regular time,
\[
a(t)<a^* \quad\Longrightarrow\quad q(t)=0\quad\text{and}\quad \dot u(t)=\dot g(t).
\]
\end{lemma}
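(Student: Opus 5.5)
The plan is to treat the reservation as a single-server-group regulator and read its fluid dynamics off the balance equations $\bar Q^n=\bar Q^n(0)+\bar G^n-\bar U^n$ and $\bar A^n=\bar A^n(0)+\bar U^n-\bar S^n$, where $\bar S^n$ is scaled service completions with compensator $\int_0^\cdot\mu\bar A^n$. Along the given subsequence these converge to $q=q(0)+g-u$, $a=a(0)+u-\int\mu a$. I will use only two pathwise properties of the admission rule. First, a coordinate at or above its reservation receives no admission; this holds because reservations are not borrowed. Second, a coordinate strictly below its reservation with waiting work is refilled whenever it has an open slot. The catch-up rule guarantees at most one extra admission per event, so it causes no fluid batch move.

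For the overfill part, I fix $\epsilon>0$ and consider intervals on which $\bar A^n>m_n/n+\epsilon/2$. On such an interval $U^n$ is constant, so $a$ obeys $\dot a=-\mu a$ wherever $a>a^*$. Hence $(a-a^*)^+$ is nonincreasing and decays at least exponentially while positive. This gives $a(t)\to0$ when $a^*=0$. When $a^*>0$, it gives $(a-a^*)^+=0$ after a finite time $t_1\le\mu^{-1}\log(a(0)/a^*)$, using $\dot a\le-\mu a^*$ near the level. To make the pathwise step rigorous, I will show that on $\{\bar A^n>a^*+\epsilon\}$ the increments of $\bar U^n$ are zero, then pass to the limit using uniform convergence and the continuity of $a$.

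For complementarity, I fix a regular time $t>t_1$ with $a(t)<a^*$. By continuity, $a<a^*-\epsilon$ on a neighborhood, and so does $\bar A^n$ for large $n$. There the prelimit rule admits every waiting job immediately whenever a slot is open, and slots remain open. So $Q^n$ can be positive only between a job's arrival and the next event, up to the bounded catch-up lag. The key estimate is that the number of jobs waiting while the coordinate is below reservation is $O(1)$ per event, hence $\bar Q^n=o(1)$ on the neighborhood. This yields $q=0$ there, and since $q$ is constant zero, $\dot u=\dot g$ at regular points. The regularization transient absorbs the case of initial queue mass $q(0)>0$ with $a(0)<a^*$. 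In that case the rule admits at full rate until either $q$ empties or $a$ reaches $a^*$. The time for this is bounded by $q(0)/\text{(admission capacity)}$ plus the time needed to fill the reservation, and both are finite.

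The main obstacle is the interaction with other coordinates in the same pool. The open slot in $a<a^*$ is guaranteed only if no other coordinate is borrowing the slot. I will handle this by invoking the no-borrowing hypothesis together with the fact that the integer reservations are capacity-feasible, $\sum m_n\le$ pool size. A slot freed below reservation therefore cannot be claimed elsewhere. The one-catch-up-per-event rule is then needed only to show that initial overfill elsewhere drains at rate $\mu$ without starving this coordinate. If this fails, I would first try a pathwise comparison with an isolated $M/M/m_n$-type reservation queue.
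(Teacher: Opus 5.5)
Your overfill step is the paper's: there are no admissions above the reservation, so the excess decays at the service rate, giving $a(t)\to0$ when $a^*=0$ and a finite transient otherwise. Your ordering (overfill, then regularization, then complementarity and $\dot u=\dot g$) is also the paper's. The gap is the regularization transient. That is the only place the catch-up hypothesis does real work, and you never use it there.

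Your second ``pathwise property'' is not what the policy does when a fluid-size shortfall and a fluid-size backlog coexist. An event triggers only $O(1)$ admissions: the slot it vacates is refilled, and each pool makes one catch-up admission. Suppose instead that a below-reservation coordinate admitted every waiting job whenever a slot was open. Then a state with $q(0)>0$, $a(0)<a^*$ would be corrected by an order-$n$ batch at time $0$, which the admissible class forbids, and no transient would be needed at all. So your complementarity paragraph, which fixes any regular $t>t_1$ and says the prelimit rule ``admits every waiting job immediately,'' is valid only once the backlog below reservation is already $O(1)$. Your $O(1)$-queue estimate is an invariant that has to be started from such a state. Also, $t_1$, this coordinate's own overfill time, is the wrong starting point. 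You need every coordinate in the pool to have drained its overfill, since until then an open reservation slot need not correspond to a free server.

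The sentence ``the rule admits at full rate \dots\ both are finite'' is exactly where the missing argument belongs. What is needed, as in the paper, is a pool-level count:
\begin{enumerate}
\item While a coordinate is below its reservation with waiting work, each of its service completions is replaced, so its shortfall cannot grow.
\item Every primitive event gives the pool one catch-up admission to a nonempty coordinate of maximal shortfall. In particular this holds for each exogenous arrival, which occur at aggregate rate $n\Lambda$ with $\Lambda=\sum_i\lambda_i>0$.
\item The total shortfall is at most the pool size, so $O(n)$ catch-ups remove every violation.
\item The FSLLN for arrivals turns this into a finite fluid time. After that, the event rule preserves $q(t)(a^*-a(t))^+=0$.
\end{enumerate}
The maximal-shortfall selection gives no single coordinate a guaranteed admission rate, so a per-coordinate bound of the form $q(0)/(\text{admission capacity})$ is not available. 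The queue also need not empty, because input continues; the transient ends when the shortfall closes.

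Your last paragraph also misplaces the role of the catch-up rule. Overfill elsewhere drains because overfilled coordinates are blocked from admissions. The catch-up rule's essential job is to clear underfill with backlog at a positive fluid rate without making a batch move.
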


\begin{proof}{Proof.}
If $a^*=0$, the reservation construction sets $m_n=0$ (or, equivalently, at most an $O(1)$ rounding residue), so no fluid-scale admissions enter that coordinate and its initial in-service mass drains through service completions. Thus $a(t)\to0$. If $a^*>0$ and $a(t)>a^*$ at a regular time, the reservation blocks new admissions to this coordinate above its reserved size. Hence the excess level decreases at service rate until it reaches the reservation level. Since fluid service rates are bounded away from zero on any interval on which $a(t)\ge a^*+\epsilon$, the excess over $a^*$ cannot persist indefinitely; the rounding error in $m_n$ disappears after scaling.

It remains to regularize initial underfill.  If a pool has both positive
waiting mass and positive reservation shortfall, each primitive event gives
one catch-up opportunity to a coordinate with maximal shortfall.
Service-completion events replace the slot they open, while every exogenous
arrival event supplies one extra opportunity in each affected pool.
Because the coordinate set and initial fluid mass are finite, the number of
catch-up admissions needed to remove all initial violations is \(O(n)\),
and the FSLLN for the aggregate arrival process, whose rate
\(\Lambda=\sum_i\lambda_i\) is positive, clears it in finite fluid time.
Thereafter the event rule preserves \(q(t)(a^*-a(t))^+=0\).  This argument
uses only \(O(1)\) admissions per event and therefore does not introduce a
fluid-scale initialization jump.

Finally, fix a regular time \(t\) after regularization with \(a(t)<a^*\).
The displayed complementarity gives \(q(t)=0\).  By continuity the
reservation has positive slack on a neighborhood of \(t\), and every input
there is admitted at its event.  Consequently
\[
\bigl(U^n(t+h)-U^n(t)\bigr)-\bigl(G^n(t+h)-G^n(t)\bigr)=o(n)
\]
uniformly for sufficiently small \(h\ge0\).  Dividing by \(n\), passing to
the limit, and differentiating gives \(\dot u(t)=\dot g(t)\).
\end{proof}

\begin{lemma}[Global fluid convergence during stabilization]\label{lem:baseline_regulator_stability}
Under Assumptions~\ref{ass:regular_params} and~\ref{ass:fluid-init}, every
fluid limit in the stabilization mode of Quota Tracking satisfies,
for each class $i$,
\[
x_i(t)\to x_i^*,\qquad y_i(t)\to y_i^*,\qquad z_{i,d}(t)\to z_{i,d}^*,\qquad z_{i,s}(t)\to z_{i,s}^*,
\]
and these service-coordinate limits and the population bound are uniform over
fluid limits with the deterministic initial state in
Assumption~\ref{ass:fluid-init}.
The corresponding waiting queues converge to zero and
\(q_{g,i}(t)\to q_{g,i}^*\).
\end{lemma}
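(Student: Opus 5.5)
We establish global convergence class by class, following the sketch in Step~2 of Appendix~\ref{app:sketch_asymptotic_opt}. During stabilization each service coordinate is governed by its own reservation, and reservations are not shared across classes. Lemma~\ref{lem:reservation_complementarity} therefore gives three facts at every regular time after a finite transient. First, every coordinate sits at or below its target. Second, a coordinate strictly below target has an empty queue. Third, such a coordinate admits its input at the input rate. The deterministic quota gives \(|H_i^n-\phi_i^*G_i^n|<1\), so along every fluid limit the routing split is exact: \(\dot s_{g\to r,i}=\phi_i^*\mu_{g,i}x_i\) and \(\dot s_{g\to h_d,i}=(1-\phi_i^*)\mu_{g,i}x_i\). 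The class-\(i\) fluid equations thus become a closed, piecewise-linear system in \((x_i,y_i,z_{i,d},z_{i,s})\) and the four queues. Its inputs are the exogenous rate \(\lambda_i\) and the internal returns.

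The key step is to show that the return-weighted deficit
\[
L_i(t)=(x_i^*-x_i)^+ +\rho_i(y_i^*-y_i)^+ +\alpha_i(z_{i,d}^*-z_{i,d})^+ +(1-q_{\mathrm{acc},i})(z_{i,s}^*-z_{i,s})^+
\]
decreases at a rate proportional to itself. We exploit the feed-forward structure inside one pass. The review coordinate is fed only by generation, and the approval coordinates are fed only by generation and review. Once a downstream coordinate is below target, it admits exactly its input, so its deficit obeys a linear ODE driven by the upstream deficit. For example, \(\dot y_i=\mu_{g,i}\phi_i^*x_i-\mu_{r,i}y_i\), which at target balance gives
\[
\tfrac{d}{dt}(y_i^*-y_i)=\mu_{g,i}\phi_i^*(x_i^*-x_i)-\mu_{r,i}(y_i^*-y_i).
\]
The generation coordinate is more delicate. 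When it is below target and its queue is empty, it receives \(\lambda_i\) plus the returns, and the returns fall short of their target values exactly by the weighted downstream deficits. The weights \(\rho_i,\alpha_i,1-q_{\mathrm{acc},i}\) are chosen to make these cross terms cancel. I would verify this cancellation by writing each of the four deficit derivatives and summing them with the weights. The result should be a drift of \(-c_i(x_i^*-x_i)^+\) plus nonpositive downstream terms, with \(c_i=\mu_{g,i}(1-\alpha_i)(1-\beta_i^{(I)}\phi_i^*)>0\). A second, unweighted, cascade estimate then gives exponential decay of all four deficits. Such a cascade holds because each downstream deficit is a stable first-order filter of an upstream deficit that tends to zero.

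Two cases need care. If \(x_i^*\) is below the arrival cap, the generation queue carries the planned backlog. Then \(x_i<x_i^*\) with a nonempty queue is excluded by complementarity, so generation sits at target and only the cascade is needed. If the class is capped, there is no slack, and the cancellation above is essential. This is the step I expect to be the main obstacle: the sign bookkeeping of the positive parts at regular times where a coordinate touches its target. I would handle it with the standard fact that \(\frac{d}{dt}(a-x)^+=-\dot x\,\mathbf 1\{x<a\}\) a.e. Waiting queues downstream then vanish by complementarity. Any residual initial mass decays at rate at least \(\min\{\theta_{r,i},\theta_{h_d,i},\theta_{h_s,i}\}>0\) by Assumption~\ref{ass:regular_params}.

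Finally, \(q_{g,i}\) is handled by its own scalar balance, \(\dot q_{g,i}=\text{input}_i(t)-\mu_{g,i}x_i-\theta_iq_{g,i}\). In this equation the input and the service term converge to their target values, so \(q_{g,i}\to q_{g,i}^*=(\lambda_i-f_i^{D,*}-f_i^{S,*})/\theta_i\) by variation of constants. Uniformity over fluid limits follows for three reasons. The transients in Lemma~\ref{lem:reservation_complementarity} depend only on the initial mass, which is the same for all limits by Assumption~\ref{ass:fluid-init}. The Lyapunov decay constants are deterministic. Total population is bounded by the initial mass plus \(\lambda_it\), and obsolescence at rate \(\theta\) keeps it bounded.
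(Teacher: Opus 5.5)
Your outline follows the paper's proof: the same weights, the same cancellation, the same cascade, and the same variation-of-constants step for the queues. However, the step from the Lyapunov computation to convergence fails as you state it. The cancellation is exact. After \(\tau_i\), summing the deficit inequalities with the weights of \(L_i\) gives \(D^+L_i\le -c_i d_x\), where \(d_x=(x_i^*-x_i)^+\), and the coefficients of \(d_y,d_d,d_j\) are zero, not merely nonpositive. So \(L_i\) does \emph{not} decrease at a rate proportional to itself. Take a capped class whose generation is at target with an empty queue while some downstream coordinate is below target. The return shortfall then raises \(d_x\) by exactly what the downstream deficits lose, and \(D^+L_i=0\). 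What you actually obtain is that \(L_i\) is nonincreasing and \(\int_{\tau_i}^\infty d_x\,dt\le L_i(\tau_i)/c_i\). Your cascade then invokes ``an upstream deficit that tends to zero,'' which is exactly what has not been shown, and the exponential decay you claim does not follow.

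The missing ingredient is regularity. Every fluid derivative is bounded by a common constant: service flows are capacity bounded, and the generation queue is bounded through obsolescence. So the deficits are uniformly Lipschitz, and a nonnegative, integrable, uniformly continuous function tends to zero. Integrating the cascade inequalities then gives finite integrals of \(d_y,d_d,d_j\), and the same argument gives their convergence. These integral bounds, with constants depending only on the deterministic initial state, are also what deliver uniformity over fluid limits. ``Deterministic decay constants'' proves nothing without a rate. If you want a genuine exponential rate, tilt the weights to \((1,\rho_i+\epsilon_y,\alpha_i+\epsilon_d,1-q_{\mathrm{acc},i}+\epsilon_j)\) with \(0<\epsilon_j p_{\mathrm{pass},i}<\epsilon_y\) and all perturbations small. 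Then every coefficient in the drift of the weighted deficit sum becomes strictly negative.

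Your case split also contains a false step. For a class below its arrival cap, \(q_{g,i}^*>0\) does not make the fluid generation queue positive during the transient. From an empty queue with \(x_i<x_i^*\), the reservation admits all input and the queue stays at zero, so you cannot conclude that generation ``sits at target.'' No split is needed. The paper applies the same inequality to every class by using the target balance, whose output side carries \(\theta_iq_{g,i}^*\), and then dropping the term \(-\theta_iq_{g,i}^*\le 0\). This term is zero for a capped class and favorable otherwise.

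Likewise, complementarity empties a downstream queue only while its coordinate is below target. At target you need the scalar balance for queue plus in-service mass at that node, with \(\theta>0\) and converging input. That is the same variation-of-constants argument you already use for \(q_{g,i}\).
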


\begin{proof}{Proof.}
The cumulative quota has discrepancy less than one, so every fluid limit
routes the fraction \(\phi_i^*\).  Fix a class \(i\).
Lemma~\ref{lem:reservation_complementarity} applies to each generation,
AI-review, and human-approval reservation.  Thus zero-target service
coordinates drain to zero, and every positive-target coordinate satisfies
the no-overfill and fluid-complementarity properties stated in that lemma.
We therefore focus below on positive targets.

Write
\[
\phi_i^*:=\begin{cases}v_i^*/x_i^*,&x_i^*>0,\\0,&x_i^*=0,\end{cases}
\qquad
\rho_i:=p_{\mathrm{rej},i}+p_{\mathrm{pass},i}(1-q_{\mathrm{acc},i})
=1-(1-\alpha_i)(1-\beta_i^{(I)}).
\]
The coefficient $\rho_i$ is the probability that a class-$i$ task routed through AI review eventually returns to generation rather than being approved. Consider the return-weighted positive deficit
\[
L_i(t):=(x_i^*-x_i(t))^+
+\rho_i(y_i^*-y_i(t))^+
+\alpha_i(z_{i,d}^*-z_{i,d}(t))^+
+(1-q_{\mathrm{acc},i})(z_{i,s}^*-z_{i,s}(t))^+.
\]
For a positive target, the no-overfill part of
Lemma~\ref{lem:reservation_complementarity} gives a finite time after which
the corresponding service coordinate is at most its target.  The time can
be chosen uniformly over the fluid-limit family because all limits have the
same deterministic initial state.  A zero-target coordinate receives no
fluid-scale admissions and drains exponentially; its absolute error
therefore has a common finite integral.  We may consequently fix a common
time \(\tau_i<\infty\) after which every positive-target coordinate of class
\(i\) is at most its target, and handle zero-target coordinates separately
by this exponential bound.

For \(t\ge\tau_i\), write \(d_x=(x_i^*-x_i)^+\), \(d_y=(y_i^*-y_i)^+\),
\(d_d=(z_{i,d}^*-z_{i,d})^+\), and \(d_j=(z_{i,s}^*-z_{i,s})^+\).
At every regular time, reservation complementarity and the positive-part
chain rule give
\begin{align}
D^+d_y&\le-\mu_{r,i}d_y+\mu_{g,i}\phi_i^*d_x,\label{eq:ec_dy_bound}\\
D^+d_d&\le-\mu_{h,i}d_d+\mu_{g,i}(1-\phi_i^*)d_x,\label{eq:ec_dd_bound}\\
D^+d_j&\le-\mu_{h,i}d_j+\mu_{r,i}p_{\mathrm{pass},i}d_y.\label{eq:ec_dj_bound}
\end{align}
These inequalities include the target boundaries.  For example, when
\(y_i=y_i^*\), the derivative of \(d_y\) is
\(\max\{-\dot y_i,0\}\); if its queue is positive the regulator replaces
departures and this derivative is zero, whereas if the queue is empty all
input is admitted and it is at most
\(\mu_{g,i}\phi_i^*d_x\).  The direct and screened-route approval coordinates are
identical cases.

For the generation coordinate, define its input rate
\[
\gamma_{g,i}(t):=\lambda_i+\mu_{r,i}p_{\mathrm{rej},i}y_i(t)
+\mu_{h,i}\alpha_i z_{i,d}(t)
+\mu_{h,i}(1-q_{\mathrm{acc},i})z_{i,s}(t).
\]
When \(x_i<x_i^*\), its queue is empty and all input is admitted.  At the
boundary \(x_i=x_i^*\), a positive queue replaces departures, while an empty
queue gives
\(D^+d_x=(\mu_{g,i}x_i^*-\gamma_{g,i})^+\); excess input, if any, waits.
Using the positive-part chain rule in both
cases, the target balance
\[
\lambda_i+\mu_{r,i}p_{\mathrm{rej},i}y_i^*
+\mu_{h,i}\alpha_i z_{i,d}^*
+\mu_{h,i}(1-q_{\mathrm{acc},i})z_{i,s}^*
=\mu_{g,i}x_i^*+\theta_iq_{g,i}^*,
\]
with
\(q_{g,i}^*:=[\lambda_i-\mu_{g,i}(1-\alpha_i)(x_i^*-\beta_i^{(I)}v_i^*)]/\theta_i\ge0\)
the target generation queue, and dropping the favorable
term \(-\theta_iq_{g,i}^*\), which is zero for a class served up to its
arrival cap and negative otherwise, yield
\begin{equation}
D^+d_x\le-\mu_{g,i}d_x
+\mu_{r,i}p_{\mathrm{rej},i}d_y
+\mu_{h,i}\alpha_i d_d
+\mu_{h,i}(1-q_{\mathrm{acc},i})d_j.
\label{eq:ec_dx_bound}
\end{equation}
Summing \eqref{eq:ec_dy_bound}--\eqref{eq:ec_dx_bound} with the weights of
\(L_i\), the coefficients on \(d_y,d_d,d_j\) cancel because
\(\rho_i=p_{\mathrm{rej},i}+p_{\mathrm{pass},i}(1-q_{\mathrm{acc},i})\),
and the coefficient on \(d_x\) is
\[
-\mu_{g,i}\bigl[1-\alpha_i(1-\phi_i^*)-\rho_i\phi_i^*\bigr]
=-\mu_{g,i}(1-\alpha_i)(1-\beta_i^{(I)}\phi_i^*)=:-c_i<0,
\]
since $\beta_i^{(I)}<1$ by Assumption~\ref{ass:regular_params} and $\phi_i^*\le1$.  Hence
\[
D^+L_i(t)\le -c_i d_x(t)\ \ (t\ge\tau_i),\qquad
\int_{\tau_i}^{\infty}d_x(t)\,dt\le \frac{L_i(\tau_i)}{c_i}.
\]
Integrating \eqref{eq:ec_dy_bound}--\eqref{eq:ec_dj_bound} gives the explicit
cascade bounds
\begin{align*}
\int_{\tau_i}^{\infty}d_y(t)\,dt
&\le \frac{d_y(\tau_i)}{\mu_{r,i}}
+\frac{\mu_{g,i}\phi_i^*}{\mu_{r,i}}
\int_{\tau_i}^{\infty}d_x(t)\,dt,\\
\int_{\tau_i}^{\infty}d_d(t)\,dt
&\le \frac{d_d(\tau_i)}{\mu_{h,i}}
+\frac{\mu_{g,i}(1-\phi_i^*)}{\mu_{h,i}}
\int_{\tau_i}^{\infty}d_x(t)\,dt,\\
\int_{\tau_i}^{\infty}d_j(t)\,dt
&\le \frac{d_j(\tau_i)}{\mu_{h,i}}
+\frac{\mu_{r,i}p_{\mathrm{pass},i}}{\mu_{h,i}}
\int_{\tau_i}^{\infty}d_y(t)\,dt.
\end{align*}
All fluid state derivatives are bounded by a common constant: service inputs
and outputs are capacity bounded, and the generation queue satisfies a
linear obsolescence bound.  Thus the service coordinates are globally Lipschitz,
so their positive deficits are uniformly continuous.  The finite integral
bounds and the no-overfill property imply, by the standard uniformly
continuous integrability argument, that all four service coordinates
converge to their targets.

Moreover, the same calculations, including the finite overfill transients
and zero-target exponential drains, yield constants \(K_{x,i},K_{y,i},
K_{d,i},K_{j,i}<\infty\), depending only on the deterministic initial state,
the primitive parameters, and the LP target, such that every fluid limit
satisfies
\begin{equation}
\begin{aligned}
\int_0^\infty |x_i(t)-x_i^*|\,dt\le K_{x,i},\quad
\int_0^\infty |y_i(t)-y_i^*|\,dt&\le K_{y,i},\\
\int_0^\infty |z_{i,d}(t)-z_{i,d}^*|\,dt\le K_{d,i},\quad
\int_0^\infty |z_{i,s}(t)-z_{i,s}^*|\,dt&\le K_{j,i}.
\end{aligned}
\label{eq:ec_uniform_integral_bounds}
\end{equation}

The generation queue has bounded input and positive obsolescence, so comparison
with \(\dot q=\Gamma_i-\theta_iq\) gives a common bound.  Downstream queues
are bounded by their balances, reservation complementarity, and the finite
service-coordinate errors in \eqref{eq:ec_uniform_integral_bounds}.
Consequently total population is uniformly bounded, and
\eqref{eq:ec_uniform_integral_bounds} gives a common \(K/T\) bound for each
service-coordinate Cesàro error.

For the queue coordinates, fix a node and write \(a(t)\) for its in-service
level, \(\gamma(t)\) for its total input rate, and
\(w(t)=q(t)+a(t)\).  Adding its queue and service equations gives
\[
\dot w(t)=\gamma(t)-\mu a(t)-\theta q(t).
\]
The target obeys
\(\gamma^*=\mu a^*+\theta q^*\).  Hence
\[
\frac{d}{dt}\{w-(q^*+a^*)\}
=-\theta\{w-(q^*+a^*)\}
+\{\gamma-\gamma^*\}+(\theta-\mu)(a-a^*).
\]
All service coordinates converge by the first part of the proof, so every
upstream-generated input satisfies \(\gamma(t)\to\gamma^*\).
Because \(\theta>0\), variation of constants for this stable scalar equation
gives \(w(t)\to q^*+a^*\).  Together with \(a(t)\to a^*\), this yields
\(q(t)\to q^*\) at every node.
\end{proof}
\begin{lemma}[Quota and effort realization]\label{lem:sparse_quota_realization}
Let \(G_i^n(t)\) count routed class-\(i\) generation completions and
\(H_i^n(t)\) those sent to AI review.  Quota Tracking satisfies
\[
\left|H_i^n(t)-\phi_i^*G_i^n(t)\right|<1
\quad\text{for every }i,n,t.
\]
Select an optimal extreme point by the fixed tie-breaking rule.  If pooled
mode begins with the
scaled service vector at its target and with every cumulative service
deficit equal to zero up to \(O(1)\) integer discrepancies, then every fluid
limit keeps the target service levels:
\begin{align*}
\int_0^t x_i(s)\,ds=x_i^*t,\quad
\int_0^t y_i(s)\,ds=y_i^*t,\\
\int_0^t z_{i,d}(s)\,ds=z_{i,d}^*t,\quad
\int_0^t z_{i,s}(s)\,ds=z_{i,s}^*t.
\end{align*}
\end{lemma}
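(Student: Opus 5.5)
The proof has two parts of unequal difficulty: a pathwise quota bound and a fluid-level effort-tracking argument.

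\emph{Quota discrepancy.} This is a pathwise induction on routing decisions. Write \(G=G_i^n\) and \(H=H_i^n\) just before a decision. The rule sends the completion to review exactly when \(H<\lfloor\phi_i^*(G+1)\rfloor\). I will show by induction that \(H=\lfloor\phi_i^*G\rfloor\) after every decision. The base case \(G=H=0\) is immediate. For the inductive step, \(\lfloor\phi_i^*(G+1)\rfloor-\lfloor\phi_i^*G\rfloor\in\{0,1\}\) because \(\phi_i^*\in[0,1]\), so the rule raises \(H\) by exactly that increment. Consequently \(0\le\phi_i^*G-H<1\) at every time, since the counters change only at decisions.

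\emph{Effort realization.} I plan to work with the cumulative effort deficits \(D_c^n\) of \eqref{eq:effort_deficit}, including the idle coordinate of each pool. The target is \(\sup_{s\le t}|D_c^n(s)|=o(n)\) for every coordinate \(c\). Dividing by \(n\) and passing to the fluid limit then gives \(\int_0^t x_c(s)\,ds=b_c^*t\), which is the claim.

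Within one pool the targets, including the idle share, sum to the pool size, and every server is always assigned to a coordinate, possibly idle. Hence \(\sum_c D_c^n(t)\) stays \(O(1)\) after subtracting the initial integer discrepancies. It therefore suffices to bound \(\max_c D_c^n\) from above by \(o(n)\).

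The largest-deficit rule gives the standard balancing drift. Whenever a coordinate carries the maximal deficit and is nonempty, every freed server goes to it. Its occupancy then climbs at rate at least the pool's completion rate, which is of order \(n\), until its deficit is no longer maximal. Because the targets \(b_c^*\) sum to the pool size, the max deficit cannot grow beyond \(O(1)\) plus fluctuations of the completion processes. Those fluctuations are \(o(n)\) by the FSLLN applied to the random-time-change representation.

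The main obstacle is the case where a coordinate with maximal deficit has an empty queue, so the rule cannot serve it. Here I will use the structure of the target, as in the discussion after Theorem~\ref{thm:asymptotic_optimality}. Each service coordinate's input at the target equals exactly its service rate: the generation coordinate receives \(\lambda_i\) plus returns, and each downstream coordinate receives the routed fraction. The quota bound makes the routed fraction exact to \(O(1)\), and upstream coordinates track their targets by the same argument. So a positive effort deficit of size \(\epsilon n\) implies that roughly \(\mu\epsilon n\) jobs have arrived but not been served, and these must sit in the queue.

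I will make this precise by an induction along the feed-forward order. First treat the generation coordinates, whose arrivals are exogenous plus returns. Then treat review, using Lemma~\ref{lem:sparse_quota_realization}'s first part for the routed input. Then treat the two approval nodes. Obsolescence is negligible at the target because downstream queues stay \(o(n)\) for as long as the deficits are \(o(n)\). For a class not served to its cap, the generation queue is of order \(n\), so nonemptiness there is automatic.

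The idle coordinate is always available, so it is never blocked. Combining these pieces, a coordinate with deficit above \(\epsilon n\) is nonempty with high probability. The largest-deficit rule then applies, and a standard "first exit of \(\max_c D_c^n\) above \(\epsilon n\)" argument yields \(\sup_{s\le t}\max_c D_c^n(s)/n\to0\) in probability for every \(\epsilon>0\).
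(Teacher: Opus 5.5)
Your proposal follows the paper's proof closely. The quota part is the same floor identity \(H_i^n=\lfloor\phi_i^*G_i^n\rfloor\). The effort part uses the same ingredients: augmented deficits with an idle coordinate summing to zero within each pool, the largest-deficit rule, and the observation that at the target every coordinate is supplied at exactly its service rate, so a positive fluid deficit forces a backlog of the same size (a rationed generation coordinate is nonempty outright).

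The step that would not go through as written is the ``induction along the feed-forward order.'' The network has feedback. For a class served up to its arrival cap, the generation coordinate has zero target queue, and its supply includes returns from review and approval. You therefore cannot certify the generation coordinates first without already knowing that the downstream deficits are \(o(n)\). The paper avoids this by arguing for all coordinates at once, at the first time any positive fluid deficit appears. At that time every coordinate feeding \(c\), including the return channels, has delivered its target input up to \(o(n)\).

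Your closing first-exit argument should be run in that form and the ordered induction dropped. Use the first appearance of a positive fluid deficit rather than a fixed \(\varepsilon n\) level. At a fixed level, other deficits of comparable size feeding \(c\) can offset its backlog, so nonemptiness is not guaranteed.
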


\begin{proof}{Proof.}
For the \(g\)th routed completion, the quota rule sets
\(H_i^n=\lfloor\phi_i^*g\rfloor\); the first claim follows immediately.
After division by \(n\), the routing discrepancy vanishes uniformly on
compact intervals, so every fluid limit routes exactly the fraction
\(\phi_i^*\).

For a queue coordinate \(c\), define cumulative service effort
\(B_c^n(t)=\int_0^tX_c^n(s)\,ds\) and
\(D_c^n(t)=nb_c^*t-B_c^n(t)\).  Add an idle coordinate with target equal to
unused pool capacity and actual effort equal to cumulative idle-server time.
The augmented targets and actual occupancies both sum to pool capacity, so
the augmented deficits sum to zero.  The lexicographic max-deficit rule
assigns each newly available server to a maximal lag, with the occupancy
gap breaking ties.

Starting from \(o(n)\) deficits, the standard largest-lag interchange
argument gives
\[
\sup_{0\le s\le T}\frac{|D_c^n(s)|}{n}\ \Longrightarrow\ 0
\qquad\text{for every fixed }T.
\]
Indeed, suppose a positive fluid deficit \(d>0\) first appeared at a
coordinate \(c\) with service rate \(\mu_c\) at time \(t\).  Cumulative
input to \(c\) minus cumulative completions equals its queue plus its
obsolescence loss.  Before \(t\) every deficit is \(o(n)\), so every
upstream coordinate has delivered its target input up to \(o(n)\), the
exogenous arrivals obey the functional strong law, and the obsolescence
loss is \(o(n)\) because the queue was \(o(n)\).  Hence the queue of
\(c\) at time \(t\) is at least \(\mu_cd\,n+o(n)>0\): a coordinate with a
positive fluid deficit is nonempty, and the next available capacity in its
pool is assigned to a maximal deficit.  If instead its queue is empty, all
incoming fluid is admitted and the deficit cannot grow.  The argument does
not distinguish a generation coordinate served up to its arrival cap, whose
target queue is zero, from a review or approval coordinate; all are
supplied at exactly their service rate at the target and all hold a
backlog equal to their shortfall.  A generation coordinate of a rationed
class has a positive target queue and is nonempty outright.  Pool
feasibility rules out simultaneous growth of positive deficits across an
entire augmented pool, and a server that went idle is reassigned by the
same rule at the arrival that makes a coordinate nonempty.  The largest
residual service time among \(O(n)\) exponential jobs is \(o(n)\) on each
fixed horizon, so nonpreemption does not affect the fluid conclusion.
Since \(D_c^n/n\Rightarrow0\), every fluid limit satisfies
\(\int_0^tX_c(s)\,ds=b_c^*t\).

At the target service vector, exogenous arrivals and returns supply every
generation coordinate at its target input rate
\(\mu_{g,i}x_i^*+\theta_iq_{g,i}^*\), and the quota supplies every
downstream coordinate at its target input rate.  A positive queue, planned
only for a generation coordinate below its arrival cap, makes that supply
more immediate but is not needed.  Hence no target-positive coordinate
loses supply at the boundary, which gives the displayed target service
levels by continuity.
\end{proof}

\begin{proof}{Proof of Theorem~\ref{thm:asymptotic_optimality}.}
\emph{Upper bound.} This follows from Lemma~\ref{lem:baseline_time_average_ub}.

\emph{Lower bound.}  Select the canonical realization of an optimal reduced
LP solution, so its downstream waiting targets are zero and its maintainer-service
targets satisfy
\[
z_{i,d}^*=\frac{\mu_{g,i}}{\mu_{h,i}}(x_i^*-v_i^*),\qquad
z_{i,s}^*=\frac{\mu_{g,i}}{\mu_{h,i}}p_{\mathrm{pass},i}v_i^*.
\]
Because \(\delta_n\to0\), a fluid trajectory can leave stabilization mode
only when its service vector is at the target.  Before that time,
Lemma~\ref{lem:baseline_regulator_stability} gives global convergence; after
that time, resetting the deficits and
Lemma~\ref{lem:sparse_quota_realization} keep the service vector at target.
Thus every fluid trajectory converges to the selected service target, with
the uniform integral bounds in
\eqref{eq:ec_uniform_integral_bounds}.  The same argument applies after any
hysteresis-triggered return to stabilization.
The final paragraph of
Lemma~\ref{lem:baseline_regulator_stability} gives queue convergence before
a switch.  After a switch, target service levels make each downstream queue
a stable scalar balance around zero and the generation queue a stable balance
around \(q_{g,i}^*\).  This proves the full-state claim.

Fix \(T>0\), put
\[
J_n(T):=\frac1{nT}\mathbb E\!\left[\sum_i r_iC_i^n(T)\right],
\]
and select \(n_k\) so that
\(J_{n_k}(T)\to\liminf_{n\to\infty}J_n(T)\).  Theorem~\ref{thm:fluid-limit} and a
further subsequence yield a continuous fluid limit on \([0,T]\).  The
completion compensators give the exact total-reward identity
\[
J_n(T)=\frac1T\mathbb E\!\left[\int_0^T
\sum_i r_i\mu_{h,i}\big((1-\alpha_i)\bar Z_{i,d}^n(t)
+q_{\mathrm{acc},i}\bar Z_{i,s}^n(t)\big)\,dt\right].
\]
The integrand is uniformly bounded by a constant determined by the finite
class set and maintainer capacity.  Hence convergence of the fluid-scaled service
coordinates implies convergence of expectations along the selected
subsequence:
\[
\liminf_{n\to\infty}J_n(T)
=\frac1T\mathbb E\!\left[\int_0^T
\sum_i r_i\mu_{h,i}\big((1-\alpha_i)z_{i,d}(t)
+q_{\mathrm{acc},i}z_{i,s}(t)\big)\,dt\right].
\]
The uniform integral bounds make the Cesàro average in the last display
converge to the LP reward \(R^*\) as \(T\to\infty\).  Therefore
\begin{align*}
\liminf_{T\to\infty}\liminf_{n\to\infty}
\frac1{nT}\mathbb E\!\left[\sum_i r_iC_i^n(T)\right]
&\ge\sum_i r_i\mu_{h,i}
\big((1-\alpha_i)z_{i,d}^*
+q_{\mathrm{acc},i}z_{i,s}^*\big)\\
&=\sum_i r_i\mu_{g,i}(1-\alpha_i)
(x_i^*-\beta_i^{(I)}v_i^*)=R^*.
\end{align*}
Together with the upper bound, this proves \eqref{eq:asymptotic_opt}.
\end{proof}
\fi

\endgroup
\end{APPENDICES}

\section{Calibration Details}
\label{ec:data}

We use the public SWE-Review-Bench release and Table~8 of
\citet{wang2026swereview} (Claude Opus~4.6).  For the high-, medium-, and
low-quality generator tiers, the
\((N,\mathrm{resolved},\mathrm{unresolved},\mathrm{false\ reject},
\mathrm{false\ approve})\) counts are \((500,361,139,21,101)\),
\((462,235,227,54,43)\), and \((422,116,306,30,23)\), where \(N\) is the
release split size, error counts are those of Table~8, and resolved counts
apply the published rates 72.2\%, 50.9\%, and 27.5\% to \(N\) and round.
We set \(\hat\alpha=\mathrm{unresolved}/N\),
\(\hat\beta^{(I)}=\mathrm{false\ reject}/\mathrm{resolved}\), and
\(\hat\beta^{(II)}=\mathrm{false\ approve}/\mathrm{unresolved}\).
Table~8 uses a 500 base for every tier, but the release lists 462 and 422
medium- and low-tier instances; we use the release counts.  A 500 base
gives \((\hat\alpha,\hat\beta^{(I)},\hat\beta^{(II)})=(0.490,0.212,0.176)\)
and \((0.724,0.217,0.064)\) for the medium and low tiers; single-class
thresholds then move by less than 0.1 maintainer, the reviewed-profile
sequence in Section~\ref{subsec:exp_calibrated_which} is unchanged, and
the maximal regrets of the \(q_{\mathrm{acc}}\)-only and \(\eta\)-only
rules become 3.9\% and 3.9\%.  Reviewer failures and missing patches are
not separated in the published aggregates and are treated as part of the
reported outcome cells.

\end{document}